\newcommand{\Aut}{\operatorname{Aut}}
\newcommand{\Reg}{\operatorname{Reg}}
\newtheorem{theorem}{Theorem}[section]
\newtheorem{lemma}[theorem]{Lemma}
\newtheorem{proposition}[theorem]{Proposition}
\newtheorem{corollary}[theorem]{Corollary}
\theoremstyle{definition}
\newtheorem{example}[theorem]{Example}
\newtheorem{definition}[theorem]{Definition}
\newtheorem{remark}[theorem]{Remark} 
\newcommand{\aut}{Aut} 
\newtheorem*{notation}{Notation}
\newtheorem{open}[theorem]{Open Problem} 
\numberwithin{equation}{section}
\newcommand{\Th}{\operatorname{Th}}
\newcommand{\N}{\mbox{$\mathbb{N}$}}
\newcommand{\Q}{\mbox{$\mathbb{Q}$}}
\renewcommand{\ar}{\mbox{{$\mathcal{R}$}}}
\newcommand{\el}{\mbox{{$\mathcal{L}$}}}
\newcommand{\dee}{\mbox{{$\mathcal{D}$}}}
\newcommand{\eh}{\mbox{{$\mathcal{H}$}}}
\newcommand{\jay}{\mbox{{$\mathcal{J}$}}}
\newcommand{\ale}{$\aleph_0$-categorical}
\newcommand{\X}{\mbox{$\mathcal{X}$}}
\newcommand{\Y}{\mbox{$\mathcal{Y}$}}
\title{$\aleph_0$-categoricity of semigroups}
\author{Victoria Gould and Thomas Quinn-Gregson}
\email{victoria.gould@york.ac.uk} 
\email{thomas.quinn-gregson@york.ac.uk}
\address{Department of Mathematics\\University
  of York\\York YO10 5DD\\UK}
\date{\today}
\subjclass[2010]{Primary  20M10 ; Secondary 03C35  }
\keywords{$\aleph_0$-categorical, semigroups, semidirect product}
\thanks{This work forms part of the PhD of the second author at the University of York, funded by EPSRC}
\begin{document}

\begin{abstract}
  \noindent In this paper we initiate the study of $\aleph_0$-categorical semigroups, where a countable semigroup $S$ is $\aleph_0$-categorical if, for any natural number $n$, the action of  its group of automorphisms
  $\Aut S$ on $S^n$ has only finitely many orbits.  We show that $\aleph_0$-categoricity transfers to certain important substructures such as maximal subgroups and principal factors. 
  We  examine the relationship between  $\aleph_0$-categoricity and  a number of semigroup
  and monoid constructions, namely  direct sums, 0-direct unions,  semidirect products and $\mathcal{P}$-semigroups. As a corollary, we determine the $ \aleph_0$-categoricity of an $E$-unitary inverse semigroup with finite semilattice of idempotents in terms of that of the maximal group homomorphic image. 
\end{abstract}

\maketitle

\bibliographystyle{amsalpha}


%
%
%
%
%
%
%
%
%
%
%
%
%
%

\section{Introduction}  

 The concept of $\aleph_0$-categoricity is rooted in model theory. Let $A$ be a countable structure for a first-order language $L$, and let  $\Th(A)$  be  the set of all sentences of $L$ which are true in $A$. Then $A$ is {\em $\aleph_0$-categorical} if any other countable structure $B$ for $L$ with $\Th(A)=\Th(B)$ is such that $B$ is isomorphic to $A$.
 Thus, $A$ is $\aleph_0$-categorical if it is determined up to isomorphism by its first order theory. Morley's celebrated categoricity theorem \cite{Morley} was the impetus for the development of the rich area of model theory known as {\em stability theory}.

 It is a natural question to determine the $\aleph_0$-categorical members of any class of relational or algebraic structures: in our case, semigroups and monoids. This may be addressed without recourse to specialist model theory, in view of the following result,  independently accredited to Engeler \cite{Engeler}, Ryll-Nardzewski \cite{Ryll}, and Svenonius \cite{Svenonius}, but  commonly referred to as the Ryll-Nardzewski Theorem (RNT). Although stated in generality, we will apply it almost entirely in the context of semigroups, and semigroups with augmented structure (such as an identity, a partial order, or  distinguished subsets). 

\begin{theorem} \emph{(Ryll-Nardzewski Theorem)}
\label{RNT}
A countable structure $A$ is $\aleph_0$-categorical if and only if
$\Aut A$  has only finitely many orbits in its natural action on $A^n$ for each $n\in\N$. 
\end{theorem}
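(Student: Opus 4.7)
The plan is to prove both directions by passing through the Stone space $S_n(T)$ of complete $n$-types over $\emptyset$ of the theory $T = \Th(A)$, using the model-theoretic bridge that, in a countable $\omega$-saturated structure, two $n$-tuples lie in the same $\Aut A$-orbit precisely when they realize the same type. I would first establish that the stated orbit condition on $A$ is equivalent to $S_n(T)$ being finite for every $n$, and then show that the latter is equivalent to $\aleph_0$-categoricity.

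For the forward direction, I would begin by showing that an $\aleph_0$-categorical countable structure is $\omega$-saturated: using L\"owenheim--Skolem and a standard chain construction, one produces a countable $\omega$-saturated model $B$ with $\Th(B) = T$, and $\aleph_0$-categoricity forces $A \cong B$. A back-and-forth argument exploiting $\omega$-saturation then shows that tuples with the same type are $\Aut A$-equivalent, so orbits correspond bijectively to realized $n$-types. To conclude that there are only finitely many such types (not merely countably many), I would invoke the Omitting Types Theorem: if $S_n(T)$ were infinite, then by compactness it would contain a non-isolated type, which can be omitted in some countable model of $T$, and the failure of $\omega$-saturation in that model contradicts $\aleph_0$-categoricity after comparison with a saturated realisation.

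For the reverse direction, observe first that finitely many orbits on $A^n$ immediately forces finitely many $n$-types realized in $A$, since automorphisms preserve formulas and hence types. A direct back-and-forth argument between any two countable models $A$ and $B$ of $T$ then produces an isomorphism: at each stage one extends a partial map preserving types, and finiteness of the relevant type spaces guarantees that every type realised on one side is also realised on the other, so the construction never stalls.

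The main obstacle is the improvement from countably many to finitely many types in the forward direction, where the Omitting Types Theorem (or equivalently a Baire-category argument on the compact Hausdorff Stone space, showing that a countably infinite such space would contain a perfect closed subset) does the essential work. Once this finiteness is in place, the orbit/type correspondence and the back-and-forth constructions become essentially bookkeeping.
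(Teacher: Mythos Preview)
The paper does not give a proof of this theorem: it is quoted as a classical result, attributed to Engeler, Ryll-Nardzewski and Svenonius, and used throughout as a black box. So there is no ``paper's own proof'' to compare against.

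Your outline is the standard model-theoretic argument and is broadly correct, but two steps are handled too loosely. In the forward direction, the sentence ``using L\"owenheim--Skolem and a standard chain construction, one produces a countable $\omega$-saturated model $B$'' presupposes that $|S_n(T)|\le\aleph_0$ for every $n$, which is part of what you are trying to establish; it is cleaner to bypass this and argue directly that if $S_n(T)$ were infinite it would contain a non-isolated type $p$, whence Omitting Types yields a countable model omitting $p$ while any countable model realising $p$ is non-isomorphic to it, contradicting $\aleph_0$-categoricity. In the reverse direction, you slide from ``finitely many $n$-types realised in $A$'' to ``finiteness of the relevant type spaces'' without justification: you need the extra observation that the finitely many realised types $p_1,\dots,p_k$ are each isolated (by a conjunction of formulas separating $p_i$ from each $p_j$), that the isolating formulas partition $A^n$, and hence that their disjunction is a sentence of $T$, forcing $S_n(T)=\{p_1,\dots,p_k\}$. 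Once these two gaps are filled, your back-and-forth arguments go through as described.
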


 A number of authors have considered $\aleph_0$-categoricity for algebraic structures.   Rosenstein  \cite{Rosenstein73}  classified   $\aleph_0$-categorical abelian groups, and an  extensive overview  of the results for groups is given in \cite{Apps82}. 
Baldwin and Rose \cite{BaldwinRose} investigated $\aleph_0$-categoricity for rings. 
For semigroups per se, little is known in this context. This paper and a sequel provide an introduction to the study of $\aleph_0$-categorical semigroups.

There are a number of directions in which  to study the property of $\aleph_0$-categoricity. The first half of this paper will be in line with the `preservation theorems' approach. In particular, we shall investigate when the $\aleph_0$-categoricity of a semigroup passes to subsemigroups, quotients and certain direct sums. This is certainly a popular path to take: we mention here  Grzegorczyk's handy result  that $\aleph_0$-categoricity (of a general structure) is   preserved by finite direct products \cite{Grzeg}. In \cite{Wasz}, Waszkiewicz and Weglorz 
showed that $\aleph_0$-categoricity of a structure is preserved by  Boolean extensions  by $\aleph_0$-categorical Boolean algebras, a result later generalized by Schmerl  \cite{Schmerl78} to filtered Boolean extensions. In \cite{Sab} Sabbagh proved that the group GL$_n(R)$ of invertible $n\times n$ matrices over an $\aleph_0$-categorical ring $R$ inherits $\aleph_0$-categoricity;  the corresponding result for the semigroup 
$M_n(R)$ of all $n\times n$ matrices follows easily from   Theorem~\ref{RNT}.

The final two sections  fit into the `classification' approach: determining the $\aleph_0$-categoricity of semigroups in certain classes built from $\aleph_0$-categorical components. In particular,  we classify $\aleph_0$-categorical 0-direct unions and certain
$\aleph_0$-categorical semidirect products, including the case where the semigroup being acted upon is a finite semilattice.  A number of known classifications will be of use in  our work, including the $\aleph_0$-categoricity of linear orders  \cite{Ros69}, which serve as examples of $\aleph_0$-categorical semilattices. For algebraic structures, the difficulty in achieving full classifications has long been apparent, although as indicated above, significant results are available for groups and rings. The former are of particular importance to this paper, since  maximal subgroups of $\aleph_0$-categorical semigroups are $\aleph_0$-categorical.

 Throughout the paper we develop tools  for ascertaining  $\aleph_0$-categoricity of semigroups, built on Theorem~\ref{RNT} with increasing degrees of complexity, which are made use of as follows. In Section~\ref{sec:first} we show that any \ale\, semigroup is periodic with bounded index and period, and it can be cut into \ale\, `slices'  which satisfy the additional property of being {\em characteristically (0-)simple}. We also prove the existence of an \ale\, nil semigroup that is not nilpotent, a situation that contrasts to that in ring theory.  In Sections~\ref{S rel char}  and
\ref{Sec princ} we develop a notion for  subsemigroups of being {\em relatively characteristic}, which is somewhat weaker than the standard notion of being characteristic 
(i.e. preserved by all automorphisms). We are then able to demonstrate how  $\aleph_0$-categoricity  is inherited by maximal subgroups, principal factors, and certain other quotients. The main result of Section~\ref{Sec princ} shows that a 
Brandt semigroup $\mathcal{B}^0(G;I)$ is \ale \, if and only if $G$ is an \ale \, group.
In Section~\ref{Sec 0-direct}   we consider direct sums and 0-direct sums, determining when a direct sum of finite monoids or semigroups is \ale\, and, via an analysis of 0-direct sums, when a primitive inverse semigroup is \ale. Finally in Section~\ref{sec:last} we examine how $\aleph_0$-categoricity interacts with semidirect products, and with the construction of McAlister $\mathcal{P}$-semigroups $\mathcal{P}=\mathcal{P}(G,\mathcal{X},\Y)$ in terms of the $\aleph_0$-categoricity of $G,\X$ and $\Y$, showing in particular that if $\X$ is finite then the
$\aleph_0$-categoricity of $\mathcal{P}$ depends only on $G$.

We denote the set of natural numbers (containing 0) by
 $\N$ ($\N^0$). This article does not require any background in model theory, but we refer the reader to \cite{Hodges93} for an  introductory study. 
 
\section{First examples of $\aleph_0$-categorical semigroups} \label{sec:first}
 
As commented in the Introduction, our  main tool in  determining
$\aleph_0$-categoricity is Theorem~\ref{RNT}.
 An immediate consequence worth highlighting is:
 
 \begin{corollary}\label{cor:finite}  Finite semigroups are  $\aleph_0$-categorical. 
 \end{corollary}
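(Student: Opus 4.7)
The proof is an immediate application of the Ryll-Nardzewski Theorem (Theorem~\ref{RNT}). The plan is as follows. Let $S$ be a finite semigroup and fix an arbitrary $n \in \N$. Then $S^n$ is itself a finite set, with $|S^n| = |S|^n$. Since any group action on a finite set necessarily partitions it into at most $|S|^n$ orbits, the natural action of $\Aut S$ on $S^n$ has only finitely many orbits. As this holds for every $n$, Theorem~\ref{RNT} delivers the conclusion that $S$ is $\aleph_0$-categorical.

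There is no genuine obstacle: the corollary is essentially the observation that the orbit-counting condition in the RNT is vacuously satisfied for finite structures, because every fibre $S^n$ is finite. The only mild point to note is that the RNT as stated here applies to any countable structure, and by convention this includes finite ones; no separate argument is needed to handle the case where $S$ admits no proper infinite models, since the orbit criterion is what we are verifying. Accordingly, I would present the proof in a single sentence, citing Theorem~\ref{RNT} and the finiteness of $S^n$.
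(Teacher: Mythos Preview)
Your proposal is correct and matches the paper's approach exactly: the paper presents this corollary as an immediate consequence of Theorem~\ref{RNT} without further argument, and your one-line justification via the finiteness of $S^n$ is precisely the intended reasoning.
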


Recall that a  semigroup $S$  is  \textit{uniformly locally finite} (ULF) if there exists a function $f:\mathbb{N}\rightarrow \mathbb{N}$ such that for every subsemigroup $T$ of $S$, if $T$ has a generating set of cardinality at most $n$, then $T$ has cardinality at most $f(n)$. Rosenstein~\cite[Theorem 16]{Rosenstein73} showed that an
$\aleph_0$-categorical group is ULF, a result later proved for general structures, and quoted here for semigroups.

\begin{proposition}\cite[Corollary 7.3.2]{Hodges93} \label{cat ulf} An $\aleph_0$-categorical semigroup  is ULF. 
\end{proposition}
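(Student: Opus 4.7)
The plan is to apply Theorem~\ref{RNT} at arity $n+1$ and exploit the fact that an automorphism which fixes a generating tuple pointwise must fix every element of the generated subsemigroup. More precisely, given an $\aleph_0$-categorical semigroup $S$ and $n\in\N$, let $f(n)$ denote the number of $\Aut S$-orbits on $S^{n+1}$, which is finite by Theorem~\ref{RNT}. I claim that any subsemigroup $T$ of $S$ generated by at most $n$ elements satisfies $|T|\le f(n)$.

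To see this, write $T=\langle a_1,\dots,a_n\rangle$ (padding with repeats if necessary) and abbreviate $\bar a=(a_1,\dots,a_n)$. For each $y\in T$ consider the tuple $(\bar a,y)\in S^{n+1}$. Suppose $y_1,y_2\in T$ are such that $(\bar a,y_1)$ and $(\bar a,y_2)$ lie in the same $\Aut S$-orbit: then there is $\phi\in\Aut S$ with $\phi(a_i)=a_i$ for each $i$ and $\phi(y_1)=y_2$. Since $y_1\in\langle\bar a\rangle$, we have $y_1=w(a_1,\dots,a_n)$ for some semigroup word $w$, and since $\phi$ is an automorphism fixing each $a_i$, $\phi(y_1)=w(\phi(a_1),\dots,\phi(a_n))=w(a_1,\dots,a_n)=y_1$. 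Hence $y_2=y_1$. Thus the assignment $y\mapsto\text{orbit of }(\bar a,y)$ is injective on $T$, giving $|T|\le f(n)$.

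The argument is essentially one move once the correct tuple is chosen, so there is no substantial obstacle; the only slight subtlety is the padding step, which lets a single function $f$ work uniformly for all subsemigroups generated by at most $n$ elements rather than by exactly $n$. Of course, as noted in the statement, this proposition is a direct specialisation of \cite[Corollary 7.3.2]{Hodges93} to the algebraic language of semigroups, so one could equally well simply invoke that result; the sketch above records the short self-contained argument in the semigroup setting.
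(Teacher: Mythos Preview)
Your argument is correct and is the standard proof of this fact. The paper itself gives no proof at all---the proposition is simply quoted as \cite[Corollary 7.3.2]{Hodges93}---so there is nothing to compare against beyond the citation; your sketch recovers exactly the argument underlying that corollary, specialised to semigroups, and your final paragraph already acknowledges this.
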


\begin{corollary}\label{cor:indexperiod} An $\aleph_0$-categorical semigroup $S$ is periodic, with bounded index and period. Consequently, $E(S)\neq \emptyset$ and $\dee=\jay$.
\end{corollary}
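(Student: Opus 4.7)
The plan is to deduce both claims from Proposition \ref{cat ulf}, which guarantees that any $\aleph_0$-categorical semigroup $S$ is uniformly locally finite. Let $f:\N\to\N$ be the associated witnessing function, and apply the ULF condition with $n=1$: every monogenic subsemigroup $\langle s\rangle$ generated by a single element $s\in S$ has cardinality at most $f(1)$. Since a non-periodic element would generate an infinite monogenic subsemigroup, each $s\in S$ must be periodic. Recalling that a finite monogenic semigroup is determined by its index $m(s)$ and period $r(s)$ via $|\langle s\rangle|=m(s)+r(s)-1$, we immediately obtain $m(s)+r(s)\leq f(1)+1$, yielding the required uniform bound on both index and period.

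Once periodicity is established, the conclusion $E(S)\neq\emptyset$ is immediate: for any $s\in S$, choose $k\geq m(s)$ divisible by $r(s)$, so that $s^k$ lies in the maximal subgroup of $\langle s\rangle$ and equals its identity, hence is idempotent. The equality $\dee=\jay$ is a standard feature of periodic (more generally, group-bound) semigroups, and can be invoked as such. For completeness, the classical argument proceeds by taking $a\,\jay\,b$, writing $a=xby$ and $b=uav$ with $x,y,u,v\in S^1$, and substituting to obtain $a=(xu)^n a (vy)^n$ for every $n\geq1$; by periodicity one may choose $n$ so that $(xu)^n=e$ and $(vy)^n=f$ are both idempotent, giving $a=eaf$, whence one verifies that an appropriate intermediate such as $ea$ or $bf$ serves as a pivot witnessing $a\,\dee\,b$.

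There is no substantive obstacle in this proof; it is a direct specialisation of ULF combined with classical semigroup-theoretic facts. The only mildly delicate point is bookkeeping in the final $\dee=\jay$ step, where one must track which idempotent absorbs $a$ on which side to produce the intermediate element, but this is routine and, as noted, may simply be cited from the literature on periodic semigroups.
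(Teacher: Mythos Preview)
Your argument is correct and is precisely the intended one: the paper states this as an immediate corollary of Proposition~\ref{cat ulf} without giving an explicit proof, and your derivation via the bound $|\langle s\rangle|\leq f(1)$ together with the standard facts about periodic semigroups is exactly what is being left to the reader. The only minor caveat is that in the $\dee=\jay$ sketch the elements $xu,vy$ lie in $S^1$ rather than $S$, so one should note separately the trivial case where one of them equals the adjoined identity; but as you say, the result may simply be cited.
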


We note that the converse to  Proposition \ref{cat ulf} need not hold in general, as will be evident when we consider semilattices. However, a converse holds if we restrict our attention to homogeneous semigroups, where a semigroup is {\em homogeneous}  if every isomorphism between finitely generated  subsemigroups extends to an automorphism of the semigroup.
Again we quote a result for semigroups that is true for more general structures.

\begin{proposition} \cite[Corollary 7.4.2]{Hodges93} A homogeneous ULF semigroup is $\aleph_0$-categorical. 
\end{proposition}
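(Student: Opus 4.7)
The plan is to verify the Ryll-Nardzewski criterion (Theorem~\ref{RNT}) directly: for each $n\in\N$, I will show $\Aut S$ has only finitely many orbits on $S^n$. The two hypotheses, homogeneity and ULF, play complementary roles: ULF bounds the \emph{size} of the finitely generated subsemigroup captured by a tuple, while homogeneity is what lets me lift an isomorphism between such subsemigroups to an automorphism of the whole $S$.

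Concretely, for a tuple $\bar{a} = (a_1, \ldots, a_n) \in S^n$, let $T_{\bar a} = \langle a_1, \ldots, a_n\rangle$ be the subsemigroup it generates. The ULF hypothesis supplies a function $f:\N\to\N$ with $|T_{\bar a}| \leq f(n)$, and this bound is uniform in $\bar a$. Now suppose $\bar a, \bar b \in S^n$ satisfy the property that the assignment $a_i \mapsto b_i$ extends to an isomorphism $\varphi : T_{\bar a} \to T_{\bar b}$ of finitely generated subsemigroups. By homogeneity, $\varphi$ extends to some $\Phi \in \Aut S$, and then $\Phi(\bar a) = \bar b$; hence $\bar a$ and $\bar b$ lie in the same $\Aut S$-orbit. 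Conversely, the restriction of any automorphism of $S$ taking $\bar a$ to $\bar b$ is such an isomorphism. Thus orbits on $S^n$ correspond bijectively to isomorphism classes of ``marked'' pairs $(T_{\bar a}, \bar a)$.

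It remains to count these marked isomorphism classes. There are, up to isomorphism, only finitely many semigroups of cardinality at most $f(n)$, and for each such isomorphism type $T$ there are at most $|T|^n \leq f(n)^n$ possible $n$-tuples of elements, hence at most $f(n)^n$ orbits of $\Aut T$ on $T^n$. Combining these finite counts yields a finite total number of orbits of $\Aut S$ on $S^n$, and Theorem~\ref{RNT} gives $\aleph_0$-categoricity.

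There is no real obstacle here; the argument is essentially a bookkeeping exercise combining the two hypotheses, and the only subtlety worth being careful about is ensuring that the subsemigroup generated by a tuple is used (not, say, a substructure in some richer signature), so that homogeneity as stated applies to precisely the partial map $a_i \mapsto b_i$ being extended.
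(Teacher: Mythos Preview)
Your argument is correct and is essentially the standard proof (as in Hodges~\cite{Hodges93}): bound the size of $\langle a_1,\dots,a_n\rangle$ by $f(n)$ via ULF, use homogeneity to identify $\Aut S$-orbits on $S^n$ with isomorphism classes of marked pairs $(T_{\bar a},\bar a)$, and count the latter. Note that the paper itself does not supply a proof of this proposition---it simply cites \cite[Corollary~7.4.2]{Hodges93}---so there is no alternative approach to compare against; your write-up is exactly what one would expect the omitted proof to be.
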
 

Since McLean showed  \cite{McLean} that any band is ULF, we immediately have:

\begin{corollary}\label{cor:homog} Homogeneous bands and homogeneous semilattices are  $\aleph_0$-categorical. \end{corollary}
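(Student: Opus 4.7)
The plan is essentially to chain together the two results cited immediately before the statement, so the proof should be very short. The preceding proposition (Hodges \cite[Corollary 7.4.2]{Hodges93}) tells us that any homogeneous ULF semigroup is $\aleph_0$-categorical. So to conclude the corollary, it suffices to verify that bands (and hence semilattices, which are commutative bands) are ULF.

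First, I would invoke McLean's theorem, which is cited in the paragraph just above: every band is ULF. Since semilattices are bands, semilattices are ULF as well. Now if $B$ is a homogeneous band, then $B$ is a homogeneous ULF semigroup, so the previous proposition applies directly and yields that $B$ is $\aleph_0$-categorical. The identical argument, applied to a homogeneous semilattice, gives the second half of the statement.

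There is no real obstacle here; the only thing to watch is that the ULF bound $f\colon\N\to\N$ provided by McLean is uniform in the semigroup (depending only on the number of generators), which is precisely what the definition of ULF and the quoted Hodges corollary require. In particular, one does not need to redo any orbit counting via Theorem~\ref{RNT} by hand: the work has already been absorbed into the homogeneous-plus-ULF hypothesis. Thus the corollary is a one-line consequence of the two results preceding it.
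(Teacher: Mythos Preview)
Your proposal is correct and matches the paper's approach exactly: the corollary is stated immediately after noting McLean's result that bands are ULF, and the paper likewise treats it as a one-line consequence of the preceding proposition on homogeneous ULF semigroups.
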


 The homogeneity of both bands and inverse semigroups was studied by the second author, with results appearing in \cite{Quinn16} (where a complete characterisation 
of homogeneous bands is given)  and \cite{Quinn17}, respectively.  
Complete characterisations of homogeneous semilattices appear in  \cite{Droste,DrosteTruss}, and of $\aleph_0$-categorical linear orders in \cite{Ros69}.
It follows from these papers that
\[ \Q_1\cup\{  x_1,x_2\}\cup  \Q_2\]
is not homogeneous but is $\aleph_0$-categorical, 
where $\Q_1,\Q_2$ are copies of the linearly ordered set $\Q$ and
\[q_1<x_1<x_2<q_2\]
for all $q_i\in \Q_i, i=1,2$. Further, an $\omega$-chain
\[x_1<x_2<\hdots\]
or an inverse $\omega$-chain
\[x_1>x_2>\hdots\]
are not $\aleph_0$-categorical. This contrasts starkly with the fact that an abelian group is 
$\aleph_0$-categorical if and only if it is periodic of bounded index \cite{Rosenstein73}. 

\bigskip
At this stage it is convenient to develop some notation to help us implement the RNT in practice. Given a semigroup $S$ and  pair $\underline{a}=(a_1, \dots, a_n), \, \underline{b}=(b_1, \dots ,b_n)$ of $n$-tuples of $S$, then we say that $\underline{a}$ is \textit{automorphically equivalent to/has the same $n$-automorphism type as $\underline{b}$} (in $S$) if there exists an automorphism $\phi$ of $S$ such that $\underline{a}\phi=\underline{b}$ (so that  $a_i\phi = b_i$ for each $i$). We denote this relation on $S^n$ by $\underline{a}\sim_{S,n} \underline{b}$, using the same notation for the restriction  to subsets of $S^n$. Hence, by the RNT, to prove that $S$ is $\aleph_0$-categorical it suffices to show that, for each $n$, there exists a finite list of elements of $S^n$ such that every element of $S^n$ is automorphically-equivalent to an element of the list; equivalently, in any countably infinite list of elements of $S^n$, we can find two distinct members that are automorphically-equivalent.
We augment our notation as follows. Suppose that 
$\underline{X}$ is a finite tuple of elements of $S$.  Let Aut$(S;\underline{X})$ denote the subgroup of Aut($S$) consisting of those automorphism which fix $\underline{X}$. We say that $S$ is \textit{$\aleph_0$-categorical over $\underline{X}$}  if Aut$(S;\underline{X})$ has only finitely many orbits in its action on $S^n$ for each $n\geq 1$. We denote the resulting equivalence relation on $S^n$ as $\sim_{S,\underline{X},n}$, and a pair of $\sim_{S,\underline{X},n}$-equivalent $n$-tuples are said to be \textit{automorphically equivalent over $\underline{X}$}.

 Lemma~\ref{cat over finite} is a simple generalisation of \cite[Exercise 7.3.1]{Hodges93}, and follows immediately from the RNT.

\begin{lemma} \label{cat over finite} Let $S$ be a semigroup and $\underline{X}$ a finite tuple of elements of $S$. 
For any subset $T$ of $S$, we have that $|T^n/\sim_{S,n}|$ is finite for all $n\geq 1$ if and only if $|T^n/\sim_{S,\underline{X},n}|$ is finite for all $n\geq 1$. In particular, $S$ is $\aleph_0$-categorical if and only if $S$ is $\aleph_0$-categorical over $\underline{X}$. 
\end{lemma}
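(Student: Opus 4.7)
The ``only if'' direction is immediate: since $\Aut(S;\underline{X})\leq \Aut S$, the equivalence $\sim_{S,\underline{X},n}$ refines $\sim_{S,n}$ on $T^n$, so $|T^n/{\sim_{S,n}}| \leq |T^n/{\sim_{S,\underline{X},n}}|$, whence finiteness of the latter forces that of the former.

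For the converse, write $\underline{X}=(x_1,\dots,x_k)$ and, for each $n\geq 1$, consider the map
\[
\Phi\colon T^n \longrightarrow S^{n+k}/{\sim_{S,n+k}}, \qquad \underline{a} \longmapsto \bigl[(\underline{X},\underline{a})\bigr].
\]
The crucial observation is that, for $\underline{a},\underline{b}\in T^n$, we have $\Phi(\underline{a})=\Phi(\underline{b})$ precisely when some $\phi\in\Aut S$ satisfies both $\phi(\underline{X})=\underline{X}$ and $\phi(\underline{a})=\underline{b}$, i.e., when $\underline{a}\sim_{S,\underline{X},n}\underline{b}$. Hence $\Phi$ descends to an injection
\[
\overline{\Phi}\colon T^n/{\sim_{S,\underline{X},n}} \hookrightarrow S^{n+k}/{\sim_{S,n+k}},
\]
whose image is contained in the set of $\sim_{S,n+k}$-classes that meet $\{\underline{X}\}\times T^n \subseteq (T\cup\{x_1,\dots,x_k\})^{n+k}$.

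The ``in particular'' statement follows immediately by setting $T=S$: the codomain $S^{n+k}/{\sim_{S,n+k}}$ is finite for all $n$ precisely when $S$ is $\aleph_0$-categorical by Theorem~\ref{RNT}, which supplies the bound on $|S^n/{\sim_{S,\underline{X},n}}|$ needed for $\aleph_0$-categoricity of $S$ over $\underline{X}$.

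For the general subset $T$, the image of $\overline{\Phi}$ is bounded by $|(T\cup\{x_1,\dots,x_k\})^{n+k}/{\sim_{S,n+k}}|$, so it suffices to establish finiteness of this latter quantity from the hypothesis on $T$. My plan is a pattern argument: decomposing $(T\cup\{x_1,\dots,x_k\})^m$ as a disjoint union of pieces $A_f$ indexed by functions $f\colon [m]\to\{0,1,\dots,k\}$ (with $a_i\in T$ when $f(i)=0$ and $a_i=x_{f(i)}$ otherwise), and verifying $|A_f/{\sim_{S,m}}|=|T^r/{\sim_{S,\underline{c}_f,r}}|$ where $r=|f^{-1}(0)|$ and $\underline{c}_f$ is the induced sub-tuple of $\underline{X}$. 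Each such quantity is finite by induction on $|\underline{X}|$ (for patterns missing some entry $x_j$) or by the hypothesis itself (for the all-$T$ pattern). I expect the principal obstacle to be the pattern using every entry of $\underline{X}$, since this formally reintroduces the quantity being bounded; breaking the resulting apparent circularity requires organising the argument as a double induction on the pair $(|\underline{X}|, n)$.
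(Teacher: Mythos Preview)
Your argument for the case $T=S$ (the ``in particular'' statement) is correct and is essentially what the paper intends: the injection $S^n/{\sim_{S,\underline{X},n}}\hookrightarrow S^{n+k}/{\sim_{S,n+k}}$ via $\underline{a}\mapsto[(\underline{X},\underline{a})]$ is exactly the content of ``follows immediately from the RNT.'' One small slip: you have the labels ``if'' and ``only if'' reversed---the trivial direction (using $\Aut(S;\underline{X})\leq\Aut S$) is the \emph{if} direction.

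The gap is in your treatment of general $T$. Your plan bounds $|T^n/{\sim_{S,\underline{X},n}}|$ by $|(T')^{n+k}/{\sim_{S,n+k}}|$ with $T'=T\cup\{x_1,\dots,x_k\}$, and then decomposes $(T')^{n+k}$ over patterns $f\colon[n+k]\to\{0,1,\dots,k\}$. For a pattern $f$ whose nonzero values hit every index in $\{1,\dots,k\}$ exactly once (so $|f^{-1}(0)|=n$), you compute $|A_f/{\sim_{S,n+k}}|=|T^n/{\sim_{S,\underline{X},n}}|$: precisely the quantity you are trying to bound, with the \emph{same} parameters $(|\underline{X}|,n)$. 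There are $\binom{n+k}{k}k!\geq 1$ such patterns, so summing yields an inequality of the form $c\leq C+Nc$ with $N\geq 1$, which gives no information. A double induction on $(|\underline{X}|,n)$ cannot rescue this, because the problematic term does not decrease either coordinate; the recursion is genuinely circular, not merely ``apparently'' so. (Indeed, your injection lands exactly inside one such $A_f$, so the bound via $(T')^{n+k}$ is already weaker than the equality $|T^n/{\sim_{S,\underline{X},n}}|=|A_f/{\sim_{S,n+k}}|$ you began with.)

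The paper itself offers no detailed argument for general $T$, so there is nothing to compare against here; but as written your proof is complete only for $T=S$.
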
 

\begin{example}\label{null cat} Consider the countably infinite null semigroup $N$, with multiplication $xy=0$ for all $x,y\in N$. Since any permutation of the non-zero elements gives an isomorphism,  it is clear that $N$ is homogeneous. Clearly $N$ is ULF, so that it is also $\aleph_0$-categorical. Indeed, $N$ provides a good illustration of the RNT. A pair of $n$-tuples $\underline{a}=(a_1, \dots, a_n)$ and $ \underline{b}=(b_1, \dots ,b_n)$ are automorphically equivalent if and only if  the positions  of the 0 entries are the same,
and for any $1\leq i,j\leq n$ we have $a_i=a_j$ if and only if $b_i=b_j$. Since there are finitely many choices for each of these conditions, it follows that $N$ is $\aleph_0$-categorical by the RNT.   
\end{example} 

%


It is worth formalising the points raised in the above argument, as they will be used throughout this paper. They are  based on the following lemma, which may be proven by a simple counting argument. 

\begin{lemma}\label{counting bits} Let $X$ be a set and $\gamma_1,\dots,\gamma_r$ be a finite list of equivalence relations on $X$ with $\gamma_1 \, \cap \,  \gamma_2 \,  \cap \cdots \cap \gamma_r$ contained in an equivalence relation $\sigma$ on $X$. Then 
\[ |X/\sigma| \leq \prod_{1\leq i \leq r} |X/\gamma_i|.  
\] 
\end{lemma}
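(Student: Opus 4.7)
The plan is to reduce the problem to comparing $\sigma$ with the intersection $\tau := \gamma_1 \cap \cdots \cap \gamma_r$, and then to bound $|X/\tau|$ by the product of the $|X/\gamma_i|$ via an injection into the Cartesian product of quotient sets. Since $\tau$ is itself an equivalence relation on $X$ (intersections of equivalence relations are equivalence relations), and since the hypothesis says $\tau \subseteq \sigma$, every $\tau$-class is contained in a unique $\sigma$-class.

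Concretely, I would first note that the assignment $[x]_\tau \mapsto [x]_\sigma$ is a well-defined surjection $X/\tau \twoheadrightarrow X/\sigma$, because $\tau\subseteq\sigma$ ensures that $\tau$-equivalent elements are $\sigma$-equivalent. Hence $|X/\sigma|\leq |X/\tau|$.

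Next I would define the map
\[
f: X/\tau \longrightarrow \prod_{1\leq i \leq r} X/\gamma_i, \qquad [x]_\tau \longmapsto \bigl([x]_{\gamma_1}, [x]_{\gamma_2}, \ldots, [x]_{\gamma_r}\bigr).
\]
This is well-defined since if $(x,y)\in\tau$ then $(x,y)\in\gamma_i$ for every $i$, so that $[x]_{\gamma_i}=[y]_{\gamma_i}$ coordinate-wise. The key observation is that $f$ is injective: if $[x]_{\gamma_i}=[y]_{\gamma_i}$ for every $i$, then $(x,y)\in \gamma_i$ for all $i$, hence $(x,y)\in \tau$, so $[x]_\tau=[y]_\tau$. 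Combining the surjection from the first step with the injection $f$ yields the chain of inequalities
\[
|X/\sigma| \,\leq\, |X/\tau| \,\leq\, \Bigl|\prod_{1\leq i\leq r} X/\gamma_i\Bigr| \,=\, \prod_{1\leq i\leq r} |X/\gamma_i|,
\]
which is the desired bound.

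There is no real obstacle here: the argument is a textbook set-theoretic manipulation, and the only care needed is to handle the case where some $|X/\gamma_i|$ or $|X/\sigma|$ is infinite. In that setting one uses the standard conventions for cardinal arithmetic, under which the surjection and injection above still yield the asserted cardinality inequality; since the paper will typically apply the lemma when each $|X/\gamma_i|$ is finite, this point requires no extra discussion.
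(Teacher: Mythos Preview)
Your proof is correct and is precisely the ``simple counting argument'' the paper alludes to; the paper does not spell out a proof of this lemma at all, so there is nothing further to compare.
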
 

We use the RNT in conjunction with Lemma \ref{counting bits} to prove that a semigroup $S$ is $\aleph_0$-categorical in the following way. 

\begin{corollary} \label{cor:themethod} Let $S$ be a semigroup and for each $n\geq 1$, let $\gamma_1,\dots,\gamma_{r(n)}$ be a finite list of equivalence relations on $S^n$ such that $S^n/\gamma_i$ is finite for each $1\leq i \leq {r(n)}$ and 
\[ \gamma_1\cap \gamma_2 \cap \cdots \cap \gamma_{r(n)} \subseteq \, \sim_{S,n}. 
\] 
Then $S$ is   $\aleph_0$-categorical.
 \end{corollary}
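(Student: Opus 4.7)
The plan is to combine Lemma \ref{counting bits} directly with the Ryll-Nardzewski Theorem (Theorem \ref{RNT}). Fix $n\geq 1$. The assumption gives equivalence relations $\gamma_1,\dots,\gamma_{r(n)}$ on $X = S^n$ whose intersection is contained in the equivalence relation $\sigma = \sim_{S,n}$, and each quotient $S^n/\gamma_i$ is finite. Applying Lemma \ref{counting bits} with these choices yields
\[
|S^n/\!\sim_{S,n}|\;\leq\;\prod_{1\leq i\leq r(n)} |S^n/\gamma_i|,
\]
which is a finite product of finite numbers, hence finite.

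Unwinding the definition of $\sim_{S,n}$, the $\sim_{S,n}$-classes are exactly the orbits of the natural action of $\Aut S$ on $S^n$. So the above bound says that $\Aut S$ has only finitely many orbits on $S^n$. Since $n$ was arbitrary, this holds for every $n\in\N$, and the RNT (Theorem \ref{RNT}) now gives that $S$ is $\aleph_0$-categorical.

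There is no genuine obstacle here: the statement is essentially a repackaging of the RNT in a form convenient for later use, and the whole argument is a one-line invocation of Lemma \ref{counting bits} followed by Theorem \ref{RNT}. The only point worth checking carefully is that $S$ is countable, which is implicit from the setup (the RNT as stated in Theorem \ref{RNT} applies to countable structures); but since the $\gamma_i$ each have only finitely many classes, the intersection $\gamma_1\cap\dots\cap\gamma_{r(n)}$ already has finitely many classes, forcing $\sim_{S,n}$ likewise to partition $S^n$ into finitely many classes regardless of cardinality considerations, so the argument goes through in the same form.
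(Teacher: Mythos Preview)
Your proof is correct and follows exactly the route the paper intends: apply Lemma \ref{counting bits} with $X=S^n$ and $\sigma=\sim_{S,n}$ to bound $|S^n/\!\sim_{S,n}|$ by a finite product, then invoke the RNT. The paper does not spell this out beyond the sentence preceding the corollary, so your write-up is in fact more detailed than what appears there.
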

 
Where  no confusion is likely to  arise, we may refer to equivalences to which we apply  Corollary~\ref{cor:themethod} in a less formal way, as follows. Suppose that we have an equivalence relation on 
$n$-tuples of a semigroup $S$, that arises from different ways in which a given condition may be fulfilled; if there are only finitely many classes of the equivalence, then we say the condition has {\em finitely many choices}. 

\begin{example}\label{natural} (i) If  $S$ is a semigroup with zero (as in Example~ \ref{null cat}), the equivalence 
$\sim_0$ on $S^n$ defined by the rule that \[(a_1,\hdots, a_n)\sim_0 
(b_1,\hdots, b_n)\Leftrightarrow \{ i:a_i=0\}=\{ i:b_i=0\}\]
corresponds to the condition  on $n$-tuples that they have the non-zero entries in the same positions, and this condition has $2^n$ choices.  
  
 (ii) Again as in Example~ \ref{null cat}, given a set $X$, we may impose a condition on a pair of $n$-tuples of $X$ which states that if a pair of entries in one of the tuples are equal then the same is true for the other tuple. Formally, we define an equivalence $\natural_{X,n}$ on $X^n$ by
\begin{equation} \label{natural n}   (a_1,\dots,a_n) \, \natural_{X,n} \, (b_1,\dots,b_n) \text{ if and only if } [a_i=a_j \Leftrightarrow b_i=b_j, \text{ for each } i,j].
\end{equation} 
It is clear that a pair of $n$-tuples $\underline{a}$ and $\underline{b}$ are $\natural_{X,n}$-equivalent if and only if there exists a bijection $\phi:\{a_1,\dots,a_n\} \rightarrow \{b_1,\dots,b_n\}$ such that $a_i\phi=b_i$.  Moreover, the number of $\natural_{X,n}$-classes of $X^n$ is equal to the number of ways of partitioning a set of size $n$, which is  called the \textit{$n$th Bell number}, denoted $B_n$ (for a formulation, see \cite{Rota}). In particular $B_n$ is finite, for each $n\geq 1$. Note also that if $S$ is a semigroup then  
 \[ \underline{a} \, \sim_{S,n} \, \underline{b} \Rightarrow \underline{a} \, \natural_{S,n} \, \underline{b}. 
\]  
\end{example}

We will see in this paper that $\aleph_0$-categoricity  `works well' in conjunction with 
fixing finite sets of elements within semigroups.   In particular, in order  to prove that $S$ has finitely many $n$-automorphism types,   it suffices to  consider $n$-tuples of $S
\setminus T$, where $T$ is finite. 


\begin{proposition}\label{excluded} Let $S$ be a semigroup and $T$ a finite subset of $S$. Then $S$ is  $\aleph_0$-categorical if and only if  $|(S\setminus T)^n /\sim_{S,n}|$ is finite for each $n\geq 1$.  
\end{proposition}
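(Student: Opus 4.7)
The forward direction is immediate: if $S$ is $\aleph_0$-categorical, then $|S^n/\sim_{S,n}|$ is finite by the RNT, and since $(S\setminus T)^n\subseteq S^n$, the number of $\sim_{S,n}$-classes meeting $(S\setminus T)^n$ is also finite.

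For the converse, the natural move is to pass to $\aleph_0$-categoricity over a parameter tuple. Let $\underline{X}$ be a tuple listing the elements of $T$. By Lemma~\ref{cat over finite}, it suffices to prove that $|S^n/\sim_{S,\underline{X},n}|$ is finite for every $n\geq 1$, and also (applied to the subset $S\setminus T$) the hypothesis is equivalent to $|(S\setminus T)^n/\sim_{S,\underline{X},n}|$ being finite for all $n$. This reformulation is useful because $\sim_{S,\underline{X},n}$ pointwise fixes $T$, which is exactly what makes the decomposition below clean.

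The plan is then to split an arbitrary $n$-tuple into its ``$T$-part'' and its ``$(S\setminus T)$-part'' via the following condition with finitely many choices: declare $\underline{a}\,\gamma\,\underline{b}$ in $S^n$ if, for each $i$, either $a_i=b_i\in T$, or both $a_i,b_i\in S\setminus T$. Since $T$ is finite, there are at most $(|T|+1)^n$ $\gamma$-classes. Within a fixed $\gamma$-class, the $T$-positions are literally identical in the two tuples, and any automorphism fixing $\underline{X}$ fixes those positions automatically; hence $\underline{a}\sim_{S,\underline{X},n}\underline{b}$ within this class if and only if the restrictions of $\underline{a}$ and $\underline{b}$ to the positions in $S\setminus T$ (which form tuples in $(S\setminus T)^m$ for some $m\leq n$) are $\sim_{S,\underline{X},m}$-equivalent. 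By the reformulated hypothesis, there are only finitely many such classes. Combining via Lemma~\ref{counting bits} (or Corollary~\ref{cor:themethod}) yields $|S^n/\sim_{S,\underline{X},n}|$ finite, completing the proof.

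No step is really a serious obstacle; the only subtlety worth flagging carefully is the ``if and only if'' in the paragraph above, namely that an automorphism witnessing equivalence of the restrictions to $(S\setminus T)$-positions automatically extends to a witness for the full tuples precisely because it already fixes $T$, and conversely any witness for the full tuples restricts appropriately. Once this bookkeeping is in place, the result follows from two applications of Lemma~\ref{cat over finite} sandwiching the finite-choice counting step.
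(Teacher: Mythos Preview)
Your proof is correct and follows essentially the same approach as the paper's: both arguments use Lemma~\ref{cat over finite} to pass between $\sim_{S,n}$ and $\sim_{S,\underline{X},n}$ on $(S\setminus T)^m$, partition $S^n$ by the $T$-positions-and-values (giving $(|T|+1)^n$ classes), and then reduce within each class to the finiteness of $(S\setminus T)^m/\sim_{S,\underline{X},m}$. The only cosmetic difference is that you apply Lemma~\ref{cat over finite} twice (once to reformulate the goal and once to reformulate the hypothesis), whereas the paper applies it once to the hypothesis and works directly with $\sim_{S,n}$ as the target; the underlying combinatorics and the key observation that an automorphism fixing $\underline{X}$ automatically matches the $T$-entries are identical.
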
 

\begin{proof} If $S$ is $\aleph_0$-categorical then $|S^n/\sim_{S,n}|$ is finite by the RNT, and thus so is $|(S\setminus T)^n /\sim_{S,n}|$. 

For the converse, we begin by fixing  some notation. Let $A$ be a subset of $S$ and $\underline{s}=(s_1,\dots,s_n)$ an $n$-tuple of $S$. Then we let 
\[ \underline{s}[A]:=\{k\in \{1,\dots,n\}: s_k\in A\}
\] 
be the set of positions of entries of $\underline{s}$ which lie in $A$. If $\underline{s}[A]=\{k_1,\dots,k_r\}$ is such that $k_1<k_2<\cdots <k_r$ then we obtain an $r$-tuple of $A$ given by  
\[ \underline{s}^A:=(s_{k_1},\dots,s_{k_r}). 
\] 
Let $T=\{t_1,\dots,t_r\}$ and take $\underline{T}=(t_1,\dots,t_r)\in S^r$.
 Let $\underline{a}$ and $\underline{b}$ be $n$-tuples of $S$ under the conditions that 
\begin{enumerate}[label=(\arabic*)]
\item $\underline{a}[T]=\underline{b}[T]$ with $\underline{a}^T = \underline{b}^T$, 
\item   $\underline{a}^{S\setminus T}$ and $\underline{b}^{S\setminus T}$ are automorphically equivalent over $\underline{T}$.\end{enumerate} 
Condition (1) has $(|T|+1)^n$ choices, which is finite since $T$ is. Each $|(S\setminus T)^m /\sim_{S,m}|$ is finite by our hypothesis, and so $|(S\setminus T)^m/\sim_{S,\underline{X},m}|$ is also finite for each $m\in \mathbb{N}$ by Lemma \ref{cat over finite}. Hence condition (2) has finitely many choices, and the total number of choices is therefore finite. By condition (2) there exists $\phi\in \text{Aut}(S;\underline{T})$ with $\underline{a}^{S\setminus T}\phi=\underline{b}^{S\setminus T}$. Since $t_i\phi=t_i$ for each $1\leq i \leq r$ we have $\underline{a}^T\phi=\underline{a}^T=\underline{b}^T$, and it follows that $\underline{a}\phi=\underline{b}$. The result is then immediate from Lemma \ref{counting bits}. 
\end{proof}

For a semigroup $S$ we let $S^{\underline{1}}$ \footnote{By convention, $S^1$ denotes $S$ with {\em an identity adjoined if necessary}.} ($S^0$) denote $S$ with an identity (zero) adjoined (whether or not $S$ already has such an element). The next result follows from
Proposition~\ref{excluded} and the fact that automorphisms of $S^{\underline{1}}$ and
$S^0$ are exactly extensions of automorphisms of $S$. 

\begin{corollary}\label{cor:idzero} The following are equivalent for any semigroup $S$:
\begin{enumerate}\item $S$ is  $\aleph_0$-categorical;
\item $S^0$ is $\aleph_0$-categorical;
\item $S^{\underline{1}}$ is $\aleph_0$-categorical.
\end{enumerate}
\end{corollary}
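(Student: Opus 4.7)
The plan is to prove $(1)\Leftrightarrow(2)$ and $(1)\Leftrightarrow(3)$ in one sweep by applying Proposition~\ref{excluded} with the adjoined element playing the role of the finite excluded set. Write $S^{*}$ to mean either $S^{0}$ or $S^{\underline{1}}$, and let $*$ denote the adjoined zero or identity, respectively.

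First I would substantiate the parenthetical claim stated just before the corollary, namely that restriction and extension provide mutually inverse bijections between $\Aut(S^{*})$ and $\Aut(S)$. Because $*$ is \emph{freshly} adjoined (regardless of whether $S$ already had a zero or identity), it is the unique two-sided zero, resp.\ identity, of $S^{*}$: any candidate $s\in S$ fails to absorb, resp.\ fix, $*$ itself. Hence every $\phi\in\Aut(S^{*})$ fixes $*$ and restricts to a bijection of $S=S^{*}\setminus\{*\}$; since products of elements of $S$ never equal $*$ (the new element is not in the image of the multiplication on $S$), the set $S$ is closed under the multiplication of $S^{*}$, and $\phi|_{S}\in\Aut(S)$. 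Conversely each $\psi\in\Aut(S)$ extends uniquely to $\Aut(S^{*})$ by the rule $\psi(*)=*$.

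This bijection transports $\sim_{S,n}$ on $S^{n}$ exactly to the restriction of $\sim_{S^{*},n}$ to $S^{n}$, so $|S^{n}/\!\sim_{S,n}|=|(S^{*}\setminus\{*\})^{n}/\!\sim_{S^{*},n}|$ for every $n\geq 1$. Applying Proposition~\ref{excluded} to $S^{*}$ with the finite set $T=\{*\}$, we conclude that $S^{*}$ is $\aleph_{0}$-categorical if and only if $|(S^{*}\setminus\{*\})^{n}/\!\sim_{S^{*},n}|$ is finite for every $n$, which by the previous line is equivalent, via the Ryll--Nardzewski Theorem, to $\aleph_{0}$-categoricity of $S$. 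Taking $*$ to be the adjoined zero yields $(1)\Leftrightarrow(2)$, and taking $*$ to be the adjoined identity yields $(1)\Leftrightarrow(3)$.

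There is no genuine obstacle: the argument is essentially formal once Proposition~\ref{excluded} and the automorphism correspondence are in hand. The one point deserving care is that $S^{0}$ and $S^{\underline{1}}$ adjoin a \emph{new} element even when $S$ already possesses a zero or identity, which is what guarantees that $*$ is the unique zero or identity of $S^{*}$ and therefore fixed by every automorphism.
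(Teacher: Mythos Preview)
Your proposal is correct and follows essentially the same approach as the paper: the paper states that the corollary follows from Proposition~\ref{excluded} together with the fact that automorphisms of $S^{\underline{1}}$ and $S^{0}$ are exactly the extensions of automorphisms of $S$, which is precisely what you have unpacked. Your explicit verification that the freshly adjoined element is the unique zero (resp.\ identity), and hence is fixed by every automorphism, fills in the one detail the paper leaves implicit.
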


In a similar fashion to that in Corollary~\ref{cor:idzero} we can build new 
$\aleph_0$-categorical semigroups from given ingredients, provided the ingredients interact in a relatively simplistic way (see Proposition~\ref{prop:chain} below). 

We now introduce an important notion for $\aleph_0$-categorical semigroups. 

\begin{definition} A subset $A$ of a semigroup $S$ \textit{characteristic} if  it is invariant under automorphisms of $S$, that is, $A\phi=A$ for all $\phi\in \text{Aut}(S)$. 
\end{definition} 
 Clearly any subset is characteristic if and only if it is a union of $\sim_{S,1}$-classes, and if $A$ is a characteristic subset of a semigroup $S$ then $\langle A \rangle$ is a characteristic subsemigroup of $S$. 

Let $S$ be either a semigroup with zero $0$, or a ring,  and let $n\in \N$. We say that $S$ is {\em nil}
of degree $n$ 
if for all $a\in S$ we have $a^n=0$, and $S$ is {\em nilpotent} of degree $n$ if $S^n=0$. 

\begin{corollary} \label{cor:ch}  Let $S$ be  $\aleph_0$-categorical. Then
\begin{enumerate} \item  there are finitely many characteristic subsets of $S$;
\item any  characteristic subsemigroup of $S$ is $\aleph_0$-categorical;
\item any ideal $S^m$ is characteristic and hence $\aleph_0$-categorical;
\item  for some $n\geq 1$ we have $S^n=S^{n+1}$, so that $S^n=S^m$ for all $m\geq n$;
\item with $n$ as in (3), for any $k<{\ell}\leq n$ we have that $S^{\ell}$ is an ideal of $S^k$ and the Rees quotient $S^k/S^{\ell}$ is $\aleph_0$-categorical;
\item with $n$ as in (3),  $S/S^n$ is $\aleph_0$-categorical and nilpotent of degree $n$.
\end{enumerate}
\end{corollary}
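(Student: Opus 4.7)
The plan is to apply the RNT together with the observation (recorded just before the corollary) that a subset of $S$ is characteristic if and only if it is a union of $\sim_{S,1}$-classes. Part (1) is then immediate: the RNT with $n=1$ gives finitely many $\sim_{S,1}$-classes, hence only finitely many unions of such classes. For (2), let $T$ be a characteristic subsemigroup; every $\phi\in\Aut S$ restricts to an automorphism of $T$, so if $\underline{a},\underline{b}\in T^n$ satisfy $\underline{a}\sim_{S,n}\underline{b}$ then $\underline{a}\sim_{T,n}\underline{b}$, giving $|T^n/\!\sim_{T,n}|\le|S^n/\!\sim_{S,n}|<\infty$, and the RNT concludes.

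For (3), $S^m$ is trivially an ideal of $S$, and is characteristic because automorphisms of $S$ are multiplicative; (2) then delivers $\aleph_0$-categoricity. For (4), the descending chain $S\supseteq S^2\supseteq S^3\supseteq\cdots$ consists of characteristic subsets, so by (1) contains only finitely many distinct terms and therefore stabilises: $S^n=S^{n+1}$ for some $n\ge 1$. A routine induction on $m\ge n$ then yields $S^m=S^n$ for all $m\ge n$.

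The main work is in (5). The inclusions $S^k\cdot S^\ell,\,S^\ell\cdot S^k\subseteq S^\ell$ make $S^\ell$ an ideal of $S^k$. For the Rees quotient, rather than trying to verify that $S^\ell$ is characteristic as a subset of $S^k$ (which is not obvious for arbitrary $\ell$), I would work through $\Aut S$ directly: since both $S^k$ and $S^\ell$ are characteristic in $S$, every $\phi\in\Aut S$ restricts to an automorphism of $S^k$ that preserves $S^\ell$, and so descends to an automorphism $\tilde\phi$ of $S^k/S^\ell$. Fix a basepoint $v_0\in S^\ell$ and, for each $\underline{a}\in(S^k/S^\ell)^n$, define a lift $\underline{a}'\in(S^k)^n\subseteq S^n$ by replacing each zero entry of $\underline{a}$ by $v_0$. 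A short check, using that $\phi$ maps $S^\ell$ into $S^\ell$, shows that if $\underline{a}'\sim_{S,n}\underline{b}'$ via $\phi$, then $\underline{a}\,\tilde\phi=\underline{b}$ in $S^k/S^\ell$. Hence $|(S^k/S^\ell)^n/\!\sim_{S^k/S^\ell,n}|\le|S^n/\!\sim_{S,n}|<\infty$, and the RNT applies.

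Finally, (6) follows from (5) with $k=1$ and $\ell=n$ (the degenerate case $n=1$ makes $S/S^n$ a one-element semigroup). Nilpotency of degree $n$ is immediate: any $n$-fold product in $S/S^n$ is the image of an element of $S^n$, which is zero in the quotient. I expect the principal obstacle to be the bookkeeping in (5) — making sure the chosen lift behaves uniformly and that zero-entries are preserved under $\tilde\phi$; the other parts are straightforward once (1) and (2) are in hand.
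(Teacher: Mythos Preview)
Your proof is correct and, for parts (1)--(4) and (6), it matches the paper's argument essentially line for line. The one genuine difference is in (5), and it is a difference in your favour.

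The paper argues (5) by invoking Proposition~\ref{excluded} to reduce to tuples of nonzero elements of $S^k/S^\ell$, identifying these with tuples in $S^k\setminus S^\ell$, and then using the $\aleph_0$-categoricity of $S^k$ to obtain some $\phi\in\Aut(S^k)$; it then asserts that ``it is easy to see that $\phi$ induces an automorphism $\phi'$ of $S^k/S^\ell$''. That step silently assumes $S^\ell$ is characteristic in $S^k$, which --- exactly as you flagged --- is not obvious, and indeed can fail: for instance, if $S^k$ happens to be a null semigroup then every permutation of its nonzero elements fixing $0$ is an automorphism of $S^k$, and such a permutation need not carry $S^\ell$ to itself when $\{0\}\subsetneq S^\ell\subsetneq S^k$. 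Your route through $\Aut S$ bypasses this issue entirely, since both $S^k$ and $S^\ell$ are characteristic in $S$, so the restriction of any $\phi\in\Aut S$ to $S^k$ genuinely does preserve $S^\ell$ and hence descends to the quotient. Your lift-with-basepoint device then plays the role that Proposition~\ref{excluded} plays in the paper; either mechanism works once one has the right source of automorphisms. The bookkeeping you worry about (zero entries being preserved) goes through precisely because $\phi$ carries $S^\ell$ onto $S^\ell$.
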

\begin{proof} (1) follows from the fact that a subset is characteristic if and only if it is a union of $\sim_{S,1}$ classes and (2) is immediate from the definition of 
characteristic subsemigroup.  (3) is clear and then  (4) is immediate from (1), (3), and the fact that  $S^n\supseteq S^{n+1}$. 

For (5), observe that $S^{\ell}$ is an ideal of $S^k$. To see that it is
$\aleph_0$-categorical, let $m\in\N$ and consider a list of
$m$-tuples of elements of $S^k/S^{\ell}$. By Proposition~\ref{excluded} 
we may assume all of these elements are non-zero, and we may thus identify them with 
elements of $S^k$. Since $S^k$ is $\aleph_0$-categorical we may find a distinct pair 
$(a_1,\hdots, a_m)$ and $(b_1,\hdots, b_m)$ in our list and $\phi\in\Aut S^k$ such that
$a_i\phi=b_i$ for $1\leq i\leq m$. It is easy to see that $\phi$ induces an automorphism
$\phi'$ of 
$S^k/S^{\ell}$, and regarded as $m$-tuples of $S^k/S^{\ell}$, we have
$(a_1,\hdots, a_m)\phi'=(b_1,\hdots, b_m)$. Hence by Proposition~\ref{excluded},
$S^k/S^{\ell}$ is $\aleph_0$-categorical.

For (6), observe  $S/S^n$ is  nilpotent of degree $n$. The rest of the statement follows from (5). 
\end{proof}

Corollary~\ref{cor:ch} shows that any \ale \, semigroup is associated with a nilpotent one.
A major result for \ale \, rings states that any \ale \, nil ring of degree $n$  is nilpotent of degree $n$ \cite{Cherlin}. We show that the corresponding result is not true for semigroups, by constructing a countably infinite $\aleph_0$-categorical commutative  semigroup $S$ such that $S$ is nil of degree 2 and $S=S^2$, so that certainly $S$ is not non-nilpotent. 

A commuative  semigroup $S$, nil of degree 2 such that $S=S^2$ is called a \textit{zs-semigroup}. Some progress has been made in understanding the structure of zs-semigroups, including \cite{Jezek} and \cite{Flaska}. 
In \cite{Flaska}, a simple example of a zs-semigroup is constructed, which is very similar to that given below. However, we need to start with a countable atomless Boolean algebra
in order to ensure the resulting semigroup is \ale.

\begin{remark}\label{boolean} There exists a unique, up to isomorphism, countable atomless Boolean algebra $B$
\cite[Theorem 10]{givanthalmos} \footnote{We thank Prof. John Truss of the University of Leeds for bringing this example to our attention.}. Since atomless Boolean algebras are axiomatisable, it follows that $B$ is \ale.
We can  construct $B$ in a number of ways, including the Lindenbaum algebra of propositional logic \cite[Chapter 6]{Landman}. For our purposes it is convenient to  use the construction given in \cite[Corollary 23]{Gratzer} via certain subsets of $[0,1]$. Let $B$ be the set of all subsets of $[0,1]\cap \mathbb{Q}$ of the form 
\[ (a_0]\cup (b_1,a_1]\cup \cdots \cup (b_{n-1},a_{n-1}], 0<a_0<b_1<a_1<\cdots <b_{n-1}<a_{n-1},
\] 
where $a_0, b_1,a_1,\hdots, a_{n-1}\in \mathbb{Q}$. It is clear that $B$ forms a subalgebra of the Boolean algebra of subsets of $(0,1]\cap \mathbb{Q}$ and is atomless.  Moreover, if $A = (a_0]\cup (b_1,a_1]\cup \cdots \cup (b_{n-1},a_{n-1}]$ is a non-empty element of $B$ then, taking any $x\in (b_1,a_1)\cap \mathbb{Q}$, we have that 
\[ A=\big((a_0]\cup (b_1,x]\big) \cup \big((x,a_1]\cup \cdots \cup (b_{n-1},a_{n-1}]\big) \]
and
\[ \big((a_0]\cup (b_1,x]\big) \cap \big((x,a_1]\cup \cdots \cup (b_{n-1},a_{n-1}]\big)=\emptyset
.\] 
\end{remark}

\begin{theorem} There is an \ale\, non-nilpotent, nil semigroup. 
\end{theorem}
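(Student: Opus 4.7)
The plan is to mimic the construction of \cite{Flaska} but based on the countable atomless Boolean algebra $B$ from Remark \ref{boolean}, so that the rich automorphism group of $B$ yields the desired $\aleph_0$-categoricity. Concretely, I would take $S$ to be the underlying set of $B$, declare the Boolean zero $\emptyset$ to be the semigroup zero, and define
\[ x\cdot y = \begin{cases} x\cup y & \text{if } x,y\neq 0 \text{ and } x\cap y=\emptyset,\\ 0 & \text{otherwise.}\end{cases}\]
This is visibly commutative, and nil of degree $2$ since $x\cap x=x\neq\emptyset$ forces $x^2=0$ whenever $x\neq 0$. The splitting property exhibited in the last display of Remark \ref{boolean} shows that every non-zero $A\in B$ can be written as a disjoint union of two non-empty elements of $B$, which says precisely that $A\in S^2$; hence $S=S^2$, so $S$ is not nilpotent.

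The only non-trivial algebraic check is associativity. I would handle it by cases: if $x,y,z$ are pairwise disjoint and non-zero, both $(xy)z$ and $x(yz)$ equal $x\cup y\cup z$; in every other case I would verify that both products collapse to $0$, using the observation that if any two of the three have non-empty intersection, or one of them is $0$, then at least one of the intermediate products is $0$ and the remaining factor absorbs to $0$. This is routine but is the one step where something could conceivably go wrong, so it deserves a careful walk through.

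For $\aleph_0$-categoricity, the key point is that the multiplication on $S$ is definable from the Boolean structure on $B$, so every Boolean algebra automorphism of $B$ is automatically a semigroup automorphism of $S$; that is, $\Aut(B)\leq \Aut(S)$ as permutation groups of the common underlying set. Since $B$ is $\aleph_0$-categorical as a Boolean algebra, the RNT tells us that $\Aut(B)$ has only finitely many orbits on $B^n=S^n$ for each $n$. The $\Aut(S)$-orbits on $S^n$ are unions of $\Aut(B)$-orbits, so there are only finitely many of them as well, and a second application of the RNT yields that $S$ is $\aleph_0$-categorical.

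Combining these points gives a countably infinite, commutative, $\aleph_0$-categorical zs-semigroup, which is nil of degree $2$ yet satisfies $S=S^n$ for all $n\geq 1$ and so is not nilpotent. The main obstacle is really conceptual rather than technical: one must resist trying to build $S$ from some freer combinatorial gadget (which would not be $\aleph_0$-categorical, by the same sort of reasoning that rules out $\omega$-chains at the start of Section \ref{sec:first}) and instead import the homogeneity of the unique countable atomless Boolean algebra to control $\Aut(S)$.
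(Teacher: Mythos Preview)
Your proposal is correct and follows essentially the same route as the paper: both build a zs-semigroup from the countable atomless Boolean algebra $B$ by declaring two elements to multiply to their join when disjoint and to the semigroup zero otherwise, and both deduce $\aleph_0$-categoricity from the fact that Boolean automorphisms of $B$ induce semigroup automorphisms. The only cosmetic difference is that the paper works on $B^{*}=B\setminus\{0\}$ and takes the Boolean top $1$ as the semigroup zero, whereas you keep all of $B$ and take the Boolean $\emptyset$ as the semigroup zero; your version therefore carries one extra non-zero element (the Boolean $1$) which annihilates everything, but this is harmless and the argument goes through unchanged.
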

\begin{proof} We show there is an \ale\, zs-semigroup that is not nilpotent.

Let $B=(B,\wedge, \lor, 0,1)$ be the unique countable atomless Boolean algebra and let $B^*=B\setminus \{0\}$. 
Define an operation $+$ on $B^*$ by
\[ A+A' = \left\{
\begin{array}{ll}
A \lor A' & \text{if       } A\wedge A'=0,\\
1 & \text{else. } 
\end{array}
\right. 
\]
Since $\lor$ is an associative operation and $\wedge$ distributes over $\lor$, we have for any $A_1,A_2,A_3\in B^*$, 
\begin{align*}
(A_1+A_2)+A_3 & = \left\{
\begin{array}{ll}
(A_1\lor A_2)+A_3 & \text{if       } A_1\wedge A_2=0,\\
1+A_3 & \text{else, } 
\end{array}
\right.  \\
& = \left\{
\begin{array}{ll}
(A_1\lor A_2)\lor A_3 & \text{if       } A_1\wedge A_2=0 \text{ and } (A_1 \lor A_2)\wedge A_3=0,\\
1 & \text{else, } 
\end{array}
\right.   \\
& = \left\{
\begin{array}{ll}
A_1\lor (A_2\lor A_3) & \text{if       } A_1\wedge A_2=0 \text{ and } (A_1 \wedge A_3) \lor (A_2\wedge A_3)=0,\\
1 & \text{else, } 
\end{array}
\right. \\
& = \left\{
\begin{array}{ll}
A_1\lor (A_2\lor A_3) & \text{if       } A_1\wedge A_2=0, A_1 \wedge A_3=0, \text{ and }  A_2\wedge A_3=0,\\
1 & \text{else, } 
\end{array}
\right.
\\
& = \left\{
\begin{array}{ll}
A_1\lor (A_2\lor A_3) & \text{if       } A_2\wedge A_3=0 \text{ and }  A_1 \wedge (A_2 \lor A_3)=0,\\
1 & \text{else, } 
\end{array}
\right. \\
& = \left\{
\begin{array}{ll}
A_1+ (A_2\lor A_3) & \text{if       } A_2\wedge A_3=0,\\
A_1+ 1 & \text{else, } 
\end{array}
\right. \\
& = A_1+(A_2+A_3). 
\end{align*}
Hence $(B^*,+)$ forms a semigroup, and is commutative by the commutativity of $\wedge$ and $\lor$. Note that  $A+1 = A\lor 1 = 1$ for all $A\in B^*$, so that 1 is the zero of $(B^*,*)$. If $A\in B^*$, then $A+A=1$ as $A\wedge A=A$, and so $(B^*,+)$ forms a nil semigroup of degree 2. Moreover,  by the last part of Remark~\ref{boolean} for 
any $A\in B^*$ we have  $A=A_1 \lor A_2$ for some $A_1,A_2\in B^*$ such that $A_1 \wedge A_2=0$. Hence $A=A_1+A_2$, and so $(B^*,+)$ forms  a zs-semigroup. Let us denote this semigroup by $[B]$. It is clear that if $\theta$ is an isomorphism of $B$, then 
$\theta|_{B^*}$ is an isomorphism of $B^*$. Since $B$ is \ale, so also then is $[B]$.
\end{proof} 

We can say a little more: it is easy to build an example of an \ale\,  commutative nil semigroup that is nil of degree 2 and nilpotent but {\em not} nilpotent of degree 2. 

\begin{example} Let $A$ be countably infinite, and let $u,0$ be distinct symbols not in
$A$. Let $C=A\cup \{ 0,u\}$ and define a binary operation on $C$ by letting the only non-zero products be $ab=u$ where $a,b\in A$ and $a\neq b$. It is easy to see that $C$ is a commutative semigroup, nil of degree 2 and nilpotent of degree 3. That $C$ is \ale\, follows easily from Proposition~\ref{excluded}. 
\end{example}
 
We say that a semigroup $S$ is {\em characteristically simple} if it has no  characteristic ideals other than $\emptyset$ or itself.  Similarly, we say that a semigroup $S$ with $0$  is {\em characteristically 0-simple} if it has no   characteristic ideals, other than $\{ 0\}$ and itself. 

\begin{proposition} \label{prop:thetower} Let $S$ be an \ale\, semigroup. Then $S$ is the union of a finite chain of characteristic  subsemigroups
\[S=S_0\supset S_1\supset\hdots \supset S_n\]
such that for $0\leq i\leq n-1$, $S_{i-1}$ is a characteristic ideal of $S_i$ and the Rees quotients $S_i/S_{i+1}$ are \ale\,\,  and characteristically 0-simple, 
and $S_n$ is characteristically simple. 
\end{proposition}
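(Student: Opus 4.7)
The plan is to build the chain recursively from the top down, at each stage choosing $S_{i+1}$ to be a maximal proper characteristic ideal of $S_i$. The main inputs are Corollary~\ref{cor:ch}: every \ale\ semigroup has only finitely many characteristic subsets (hence finitely many characteristic ideals), and every characteristic subsemigroup of $S$ inherits \ale-ness.

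I would set $S_0 = S$. Given $S_i$ already constructed and characteristic in $S$, so that $S_i$ is itself \ale\ with only finitely many characteristic subsets, I split into two cases. If $S_i$ is characteristically simple, I stop and set $n = i$. Otherwise $S_i$ admits a proper nonempty characteristic ideal, and since the collection of such ideals is finite I may choose $S_{i+1}$ to be maximal under inclusion among them; throughout, ``characteristic in $S_i$'' is interpreted with respect to $\Aut(S_i)$.

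Three auxiliary facts then need to be established. First, $S_{i+1}$ is itself characteristic as a subset of $S$: any $\phi \in \Aut(S)$ restricts to an automorphism of $S_i$ (since $S_i$ is characteristic in $S$) and therefore preserves $S_{i+1}$. Second, the Rees quotient $S_i/S_{i+1}$ is \ale, by a direct adaptation of the argument in Corollary~\ref{cor:ch}(5): using Proposition~\ref{excluded} to discard the zero class, an $m$-tuple in $S_i/S_{i+1}$ identifies with a tuple in $S_i \setminus S_{i+1}$, and any $\phi \in \Aut(S_i)$ descends to an automorphism of $S_i/S_{i+1}$. Third, the chain terminates: the $S_i$ form a strictly decreasing sequence of characteristic subsets of $S$, and $S$ has only finitely many such subsets by Corollary~\ref{cor:ch}(1).

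The step I expect to require the most care is showing each $S_i/S_{i+1}$ is characteristically 0-simple, since this forces one to control $\Aut(S_i/S_{i+1})$ via $\Aut(S_i)$. My plan is to argue by contradiction: suppose $J$ were a characteristic ideal of $S_i/S_{i+1}$ with $\{0\} \subsetneq J \subsetneq S_i/S_{i+1}$, and let $\pi \colon S_i \to S_i/S_{i+1}$ be the Rees projection. Then $I := \pi^{-1}(J)$ is an ideal of $S_i$ lying strictly between $S_{i+1}$ and $S_i$. The key observation is that any $\phi \in \Aut(S_i)$ descends to some $\bar\phi \in \Aut(S_i/S_{i+1})$, because $S_{i+1}$ is characteristic in $S_i$; since $J$ is $\Aut(S_i/S_{i+1})$-invariant, $\bar\phi$ fixes $J$ setwise, and hence $\phi$ fixes $I$ setwise. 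Thus $I$ is a proper characteristic ideal of $S_i$ strictly containing $S_{i+1}$, contradicting the maximal choice of $S_{i+1}$ and completing the proof.
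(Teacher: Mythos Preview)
Your proposal is correct and follows essentially the same route as the paper: pick a maximal proper characteristic ideal, show the Rees quotient is \ale\ and characteristically $0$-simple by pulling back a hypothetical proper characteristic ideal, and then recurse into the ideal. The only cosmetic difference is in the termination argument: the paper introduces the invariant $\tau(S)=|S/\sim_{S,1}|$ and inducts on it (using that a proper characteristic subsemigroup has strictly smaller $\tau$), whereas you observe directly that the $S_i$ are pairwise distinct characteristic subsets of $S$ and invoke Corollary~\ref{cor:ch}(1); these are equivalent, since characteristic subsets are exactly unions of $\sim_{S,1}$-classes.
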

\begin{proof}  For an \ale\, semigroup $S$, let $\tau(S)$ denote $|S/\sim_{S,1}|$.  Let $U$ be  a characteristic subsemigroup of $S$. Notice that for any $n\in\N$ and $\underline{u},\underline{v}\in U^n$, if $\underline{u}\sim_{S,n} \underline{v}$, then
 $\underline{u}\sim_{U,n} \underline{v}$, since $U\phi=U$ for any $\phi\in\Aut S$. From the above, if $U$ is a characteristic subsemigroup of $S$,
and $U\neq S$,  then $U$ is \ale\, and $\tau(U)<\tau(S)$. 

 We proceed by induction on $\tau(S)$. If $\tau(S)=1$, then certainly there are no proper characteristic ideals of 
$S$, so that the result is true with $n=0$. 

Suppose now that for any \ale \,  semigroup $T$ with $\tau(T)<\tau(S)$ the result holds. Let $T$ be a maximal  proper characteristic ideal of $S$.  If $T=\emptyset$ 
 then we are done. Suppose therefore that $T\neq\emptyset$;
 the proof that 
 $S/T$ is \ale\, follows as in (5) of Corollary~\ref{cor:ch} (see also (1) of Corollary~\ref{f.g cong cat}). 
If $U$ is a proper characteristic ideal of $S/T$, then either
$U=\{ 0\}$, or $U\setminus \{ 0\}\cup T$ is an ideal of $S$. Since $U\setminus \{0\}$ is a union of $\sim_{S/T,1}$-classes and hence of $\sim_{S,1}$-classes, we have
that $(U\setminus\{ 0\})\cup T$ is a characteristic ideal of $S$ strictly containing $T$, a contradiction. Thus $S/T$ is characteristically 0-simple. 

From the first part of the proof we have $\tau(T)<\tau(S)$, so that, applying the result for
$T$, we deduce the required sequence of ideals  for $S$.
\end{proof}

The following example is clear. 

\begin{example} \label{Y char} Let $S$ be a semigroup. Then the following subsets (where they exist) are characteristic:
\[ E(S), \langle E(S)\rangle, \{ 1\}, \{ 0\}, \Reg(S), \langle \Reg(S)\rangle,\]
where Reg$(S)$ is the set of regular elements of $S$. If $S$ is commutative, Reg$(S)$ forms a semilattice of abelian groups; we address the $\aleph_0$-categoricity of Clifford semigroups in the sequel \cite{GouldQuinn}. Proposition~\ref{prop:chain} below gives a taster of the results for Clifford semigroups, in the special case where the $S_i$ are groups and the connecting homomorphisms are trivial.
\end{example}

\begin{proposition}\label{prop:chain} Let $S=\bigcup_{i\in Y}S_i$ be a finite chain of semigroups such that for any $i>j, s_i\in S_i, s_j\in S_j$ we have
$s_is_j=s_j=s_js_i$. If each $S_i$ is $\aleph_0$-categorical then $S$ is $\aleph_0$-categorical. Moreover, if each $S_i$ is characteristic (for example, if it is a non-trivial group) then the converse holds.
\end{proposition}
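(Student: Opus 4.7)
The plan is to prove the forward implication with the aid of Corollary~\ref{cor:themethod}, and to derive the converse from Corollary~\ref{cor:ch}(2).

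For the forward direction, fix $n\geq 1$ and introduce two families of equivalences on $S^n$. The first, $\gamma_0$, records component-membership: tuples $\underline{a},\underline{b}$ are $\gamma_0$-equivalent if $a_k$ and $b_k$ lie in the same $S_i$ for every $k$; since $Y$ is finite, $\gamma_0$ has at most $|Y|^n$ classes. For each $i\in Y$, the equivalence $\gamma_i$ relates $\underline{a}$ and $\underline{b}$ when the positions of their entries in $S_i$ coincide and the corresponding subtuples (of length $r_i$, say) are $\sim_{S_i,r_i}$-equivalent in $S_i$. The $\aleph_0$-categoricity of each $S_i$ ensures that $\gamma_i$ has finitely many classes.

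The key step — and the main (if mild) obstacle — is to verify $\gamma_0 \cap \bigcap_{i\in Y}\gamma_i \subseteq \, \sim_{S,n}$. Given $\underline{a}$ and $\underline{b}$ in this intersection, choose $\phi_i \in \Aut S_i$ sending the $S_i$-subtuple of $\underline{a}$ to that of $\underline{b}$, and glue these into a bijection $\phi:S\to S$ with $\phi|_{S_i}=\phi_i$. That $\phi$ is a homomorphism reduces to the cross-component case: for $x\in S_i, y\in S_j$ with $i>j$, the absorption rule gives $xy=y$, so $\phi(xy)=\phi_j(y)$, while $\phi(x)\phi(y)=\phi_i(x)\phi_j(y)=\phi_j(y)$ by applying absorption to the images $\phi_i(x)\in S_i$ and $\phi_j(y)\in S_j$. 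The cases $i<j$ and $i=j$ are immediate (the latter by multiplicativity of $\phi_i$). Then $\underline{a}\phi=\underline{b}$, and Corollary~\ref{cor:themethod} delivers the $\aleph_0$-categoricity of $S$.

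For the converse under the characteristic hypothesis, Corollary~\ref{cor:ch}(2) applies directly: each $S_i$ is $\aleph_0$-categorical as a characteristic subsemigroup of $S$. To justify the parenthetical remark about non-trivial groups, I would check that each $S_i$ is automatically characteristic in that setting. The idempotents of $S$ are precisely the group identities $\{e_i:i\in Y\}$; they form a semilattice isomorphic to the finite chain $Y$ (via $e_ie_j=e_{\min(i,j)}$), and a finite chain admits only the identity automorphism, so every automorphism of $S$ fixes every $e_i$. Since $\{s\in S: se_i = s\}=\bigcup_{j\leq i}S_j$, a short induction on $i$ then recovers each $S_i$ as a characteristic subset.
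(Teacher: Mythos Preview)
Your proof is correct and follows essentially the same line as the paper's: both arguments glue together componentwise automorphisms $\phi_i\in\Aut S_i$ into an automorphism of $S$, checking that the absorption rule handles cross-component products; you package this via Corollary~\ref{cor:themethod} while the paper phrases it as an infinite-list pigeonhole argument, but these are equivalent formulations of the same idea. Your treatment of the converse and the parenthetical remark about non-trivial groups is in fact more detailed than the paper's, which simply says ``the converse is clear''.
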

\begin{proof} Let us refer to the $S_i$ ($i\in Y$) as the components of $S$. 

Suppose each $S_i$ is $\aleph_0$-categorical. Let $n\in\N$ and notice that in any infinite list of elements of $S^n$ we can pick a sublist $(a_1^i,\hdots, a_n^i)$ such that for any $1\leq \ell\leq n$ the elements
$a_{\ell}^1,a_{\ell}^2,\hdots$ all lie in the same component of $S$. Without loss of generality, suppose that
$a_1^k,\hdots, a^k_{j_1}\in S_{i_1}, a^k_{j_1+1},\hdots, a^k_{j_2}\in S_{i_2},\hdots,
a^k_{j_{u-1}+1},\hdots , a^k_{n}\in S_{i_u}$. Since each $S_k$ is
$\aleph_0$-categorical, we may find an $i<j$ and
$\phi_{\ell}\in \Aut S_{i_{\ell}}, 1\leq \ell\leq u$, such that
$\phi'=\bigcup_{1\leq \ell\leq u}\phi_{\ell}$ takes $(a^i_1,\hdots, a^i_n)$ to
$(a^j_1,\hdots, a^j_n)$. For any $t\in Y\setminus \{ i_1,\hdots, i_u\}$, let $\phi_t=I_{S_t}$.
It is easy to see that $\phi=\bigcup_{i\in Y}\phi_i$ lies in $\Aut S$ and clearly takes 
$(a^i_1,\hdots, a^i_n)$ to
$(a^j_1,\hdots, a^j_n)$. 

The converse is clear. 
\end{proof}

Finally in this section we make a comment  concerning chains of (one-sided) ideals of $S$.
 Recall that in a partially ordered set $L$, an element $u$ {\em covers}
an element $v$, written $v\prec u$,  if $v<u$ and for all $w$ with $v\leq w\leq u$ we have
$v=w$ or $w=u$. If we have a chain of elements in $L$ such that each element covers its predecessor, then we call this a {\em covering chain}.

\begin{lemma} Let $S$ be an  $\aleph_0$-categorical semigroup in which the principal 
right (left, two-sided) ideals form a chain. Then there are no infinite ascending or descending  covering chains of principal right (left, two-sided) ideals.
\end{lemma}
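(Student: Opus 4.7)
The plan is to derive a contradiction from the existence of an infinite covering chain by using $\aleph_0$-categoricity (in the form of Lemma~\ref{cat over finite}) to produce an automorphism that must preserve an ``interval-length'' invariant along the chain but which collapses two distinct heights. I would treat an ascending covering chain of principal right ideals in detail; the descending case, and the left and two-sided versions, follow by the same argument after replacing $aS^1$ by $S^1a$ or $S^1aS^1$ and reversing inclusions where necessary.

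Suppose, towards a contradiction, that $a_1S^1 \prec a_2S^1 \prec \cdots$ is an infinite ascending covering chain. The first step is the structural observation that for any $i<j$, the principal right ideals $cS^1$ with $a_iS^1 \subseteq cS^1 \subseteq a_jS^1$ are precisely $a_iS^1, a_{i+1}S^1, \ldots, a_jS^1$. This uses only that the principal right ideals form a chain, together with the covering hypothesis: picking the largest $k$ with $a_kS^1 \subseteq cS^1$, either $k=j$ (and then $cS^1=a_jS^1$) or $a_kS^1 \subseteq cS^1 \subseteq a_{k+1}S^1$, forcing $cS^1=a_kS^1$ by the cover between $a_kS^1$ and $a_{k+1}S^1$. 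Thus the interval $[a_iS^1,a_jS^1]$ in the inclusion order has size exactly $j-i+1$. Since any $\phi\in\Aut S$ satisfies $\phi(aS^1)=(a\phi)S^1$ and preserves inclusion, this length is an automorphism invariant.

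Next I would invoke Lemma~\ref{cat over finite} with $\underline{X}=(a_1)$: $S$ is $\aleph_0$-categorical over $\underline{X}$, so $\Aut(S;\underline{X})$ has only finitely many orbits on $S$. Applying this to the infinite set $\{a_2,a_3,\ldots\}$, there exist $2\leq i<j$ and $\phi\in\Aut(S;\underline{X})$ with $a_i\phi=a_j$. Then $\phi$ fixes $a_1S^1$ and sends $a_iS^1$ to $a_jS^1$, so the intervals $[a_1S^1,a_iS^1]$ and $[a_1S^1,a_jS^1]$ have the same size; but by the previous paragraph these sizes are $i$ and $j$, contradicting $i<j$. The descending case is identical with the chain read from the top, and the left and two-sided cases are entirely analogous.

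The only non-trivial step is the interval-length invariance, but in the totally-ordered setting of the hypothesis this is almost a tautology, so I anticipate no substantive obstacle; the content of the lemma really lies in the combination of this invariance with the reduction to orbits fixing the bottom element of the chain.
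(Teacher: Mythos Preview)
Your proof is correct and follows essentially the same idea as the paper's: the paper simply observes that the pairs $(a_1,a_i)$ for $i\in\mathbb{N}$ lie in distinct $2$-automorphism types, which is exactly your interval-length argument combined with your use of Lemma~\ref{cat over finite} (fixing $a_1$ is equivalent to working with $2$-tuples whose first coordinate is $a_1$). The only difference is that the paper leaves the interval-length invariance entirely implicit, whereas you spell it out.
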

\begin{proof} We argue for ascending chains of principal right ideals, the other cases being similar. 

Suppose that $a_1,a_2,\hdots\in S$ and
\[a_1S^1\prec a_2S^1\prec \hdots .\]
Then $(a_1,a_i)$ for $i\in\N$ lie in different 2-automorphism types, a contradiction. 
\end{proof}

As the case of a dense linear order shows, we cannot expect to have full ascending or descending chain conditions on ideals in $\aleph_0$-categorical semigroups.

\section[Relatively characteristic]{Inherited categoricity} \label{S rel char} 

We remarked in Corollary~\ref{cor:ch} that $\aleph_0$-categoricity is inherited by characteristic subsemigroups. 
We note that $\aleph_0$-categoricity is not inherited by every subsemigroup, and an example for groups is given by Olin in \cite{Olin}. 
However, the condition that a subsemigroup be
characteristic to inherit $\aleph_0$-categoricity is too restrictive, since many key subsemigroups, such as maximal subgroups and principal ideals, are not necessarily characteristic. The components in 
a finite chain of groups as in Proposition~\ref{prop:chain} are, but this relies on the chain being finite. We thus study a weaker condition for a subsemigroup that still guarantees the preservation of  $\aleph_0$-categoricity. 

 \begin{definition}\label{fprc} Let $S$ be a semigroup and, for some fixed $t\in \mathbb{N}$, let  $\{\underline{X}_i:i\in I\}$ be a collection of $t$-tuples of $S$. Let $\{A_i:i\in I\}$ be a collection of subsets of $S$ with the property that for any automorphism $\phi$ of $S$ such that there exists $i,j\in I$ with $\underline{X}_i \phi = \underline{X}_j$, then $\phi|_{A_i}$ is a bijection from $A_i$ onto $A_j$.  Then we call $\mathcal{A}=\{(A_i,\underline{X}_i):i\in I\}$ a \textit{system of $t$-pivoted pairwise relatively characteristic ($t$-pivoted p.r.c.) subsets} (or, {\em subsemigroups}, if each $A_i$ is a subsemigroup) of $S$. The $t$-tuple $\underline{X}_i$ is called the \textit{pivot} of $A_i$ ($i\in I$). 
 If $|I|=1$ then, letting $A_1=A$ and $\underline{X}_1=\underline{X}$, we write $\{(A,\underline{X})\}$ simply as $(A,\underline{X})$, and call $A$ an \textit{$\underline{X}$-pivoted relatively characteristic ($\underline{X}$-pivoted r.c.) subset/subsemigroup} of $S$. 
 \end{definition} 

Clearly if  $\{(A_i,\underline{X}_i):i\in I\}$ forms a system of $t$-pivoted p.r.c. subsets of $S$ and $J$ is a subset of $I$ then $\{(A_j,\underline{X}_j):j\in J\}$ is also a system of $t$-pivoted  p.r.c. subsets of $S$. In particular, each $A_i$ is an $\underline{X}_i$-pivoted r.c. subset of $S$. 
Moreover, if $A$ is an  $\underline{X}$-pivoted r.c. subset of $S$ then $A$ is a union of orbits of the set of automorphisms of $S$ which fix $\underline{X}$, since
if $a\in A$ and $\phi\in \text{Aut}(S)$ fixes $\underline{X}$ then $A\phi=A$, so that $a\phi\in A$. 

Definition \ref{fprc} has strong links with the model theoretic  concept of \textit{definability}, and we refer the reader to the introduction of \cite{Evans} for a background into these links.
 In fact much of the work in this section could be given in terms of definable sets, but in keeping with our algebraic viewpoint it is more natural to use Definition \ref{fprc}. 
     
\begin{lemma}\label{rel-char equiv}  Let $S$ be a semigroup and, for some fixed $t\in \mathbb{N}$, let  $\{\underline{X}_i:i\in I\}$ be a collection of $t$-tuples of $S$. Then for any collection $\{A_i:i\in I\}$ of subsets of $S$, the following are equivalent: 
\begin{enumerate}[label=(\roman*)]
\item  $\{(A_i,\underline{X}_i):i\in I\}$ is a system of $t$-pivoted p.r.c. subsets/subsemigroups of $S$; 
 \item if $\phi\in \Aut(S)$ is such that there exists $i,j\in I$ with $\underline{X}_i\phi = \underline{X}_j$, then $A_i\phi\subseteq A_j$. 
  \end{enumerate}
\end{lemma}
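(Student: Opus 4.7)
The plan is to observe that (i) $\Rightarrow$ (ii) is immediate from unpacking definitions, so the substance of the argument lies in (ii) $\Rightarrow$ (i). The key idea will be to exploit the fact that $\phi$, being an automorphism of $S$, has a two-sided inverse $\phi^{-1}$, which lets us turn the one-sided containment in (ii) into the equality needed for (i).

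More precisely, for (i) $\Rightarrow$ (ii) I would simply note that if $\phi|_{A_i}$ is a bijection from $A_i$ onto $A_j$ then certainly $A_i\phi \subseteq A_j$. For the converse, suppose (ii) holds and let $\phi \in \Aut(S)$ satisfy $\underline{X}_i \phi = \underline{X}_j$ for some $i,j \in I$. Applying (ii) directly gives $A_i\phi \subseteq A_j$. Now consider $\phi^{-1} \in \Aut(S)$; since $\underline{X}_i\phi = \underline{X}_j$, applying $\phi^{-1}$ componentwise yields $\underline{X}_j \phi^{-1} = \underline{X}_i$. By (ii), with the roles of $i$ and $j$ swapped and $\phi$ replaced by $\phi^{-1}$, we deduce $A_j \phi^{-1} \subseteq A_i$, hence $A_j \subseteq A_i \phi$. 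Combining the two inclusions gives $A_i \phi = A_j$. Finally, since $\phi$ is already a bijection on $S$, its restriction $\phi|_{A_i}$ is an injection with image $A_j$, and therefore a bijection from $A_i$ onto $A_j$, as required.

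There is no real obstacle here; the proof is essentially a formal manipulation relying on the crucial fact that automorphisms are invertible, which is what lets us upgrade the one-sided containment in (ii) to the two-sided bijection in (i). The reason the lemma is worth stating is that in applications it is often considerably easier to verify the weaker containment (ii) than to check bijectivity directly, so the equivalence serves as a convenient working criterion for the remainder of the section.
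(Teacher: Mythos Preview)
Your proof is correct and follows exactly the approach of the paper, which simply notes that the equivalence is immediate from the definitions together with the observation that if $\phi\in\Aut(S)$ satisfies $\underline{X}_i\phi=\underline{X}_j$ then $\phi^{-1}\in\Aut(S)$ satisfies $\underline{X}_j\phi^{-1}=\underline{X}_i$. You have merely spelled out the details that the paper leaves implicit.
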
 

\begin{proof} This follows immediately from applying the definitions, and the fact that if $\phi\in\Aut A$ and  $\underline{X}_i\phi = \underline{X}_j$, then 
 $\phi^{-1}\in \Aut A$ and  $\underline{X}_j\phi^{-1} = \underline{X}_i$. 
\end{proof}  
 
Consequently,  if  $\{(A_i,\underline{X}_i):i\in I\}$ is a system of $t$-pivoted p.r.c. subsets of a semigroup $S$ then $\{(\langle A_i \rangle,\underline{X}_i):i\in I\}$ forms a system of $t$-pivoted p.r.c. subsemigroups of $S$. 
For if $\phi\in \text{Aut}(S)$ is such that $\underline{X}_i\phi=\underline{X}_j$ for some $i,j\in I$ then $A_i\phi=A_j$, and so $\langle A_i \rangle \phi \subseteq \langle A_j \rangle$. The result follows by Lemma \ref{rel-char equiv}.

\begin{notation} Given a pair of tuples $\underline{a}=(a_1,\dots,a_n)$ and $\underline{b}=(b_1,\dots,b_m)$, we denote $(\underline{a},\underline{b})$ as the $(n+m)$-tuple given by 
\[ (a_1,\dots,a_n,b_1,\dots,b_m).
\]
\end{notation}

\begin{proposition}\label{pairwise-rel-char} Let $S$ be an $\aleph_0$-categorical semigroup and $\{(A_i,\underline{X}_i):i\in I\}$ be a system of $t$-pivoted p.r.c. subsets of $S$. Then $\{|A_i| : i \in I\}$ is finite. If, further, each $A_i$ forms a 
subsemigroup of $S$, then $\{A_i: i \in I\}$ is finite, up to isomorphism, with each $A_i$ being $\aleph_0$-categorical.
\end{proposition}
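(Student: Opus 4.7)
The plan is to apply the Ryll--Nardzewski Theorem (RNT) twice: once to $S$ itself, to bound how many different $A_i$ can occur up to size and isomorphism, and once via Lemma~\ref{cat over finite} to obtain $\aleph_0$-categoricity of each individual $A_i$.

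First I would handle the cardinality claim. Since $S$ is $\aleph_0$-categorical, the RNT gives that $\Aut S$ has only finitely many orbits on $S^t$, so the collection $\{\underline{X}_i:i\in I\}$ partitions into finitely many $\sim_{S,t}$-classes. Whenever $\underline{X}_i\sim_{S,t}\underline{X}_j$, there is some $\phi\in\Aut S$ with $\underline{X}_i\phi=\underline{X}_j$, and the defining p.r.c.\ property supplies a bijection $\phi|_{A_i}\colon A_i\to A_j$. In particular $|A_i|=|A_j|$, and so the set $\{|A_i|:i\in I\}$ has at most as many elements as the orbit count on $S^t$, hence is finite.

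Assuming now that each $A_i$ is a subsemigroup, the bijection $\phi|_{A_i}\colon A_i\to A_j$ is automatically a semigroup homomorphism (being the restriction of a semigroup automorphism of $S$ to a subsemigroup whose image lies in a subsemigroup), hence a semigroup isomorphism. So the argument above upgrades to show that $\{A_i:i\in I\}$ contains only finitely many isomorphism types. For $\aleph_0$-categoricity of a single $A_i$, I would fix $i$ and specialise to the singleton system $(A_i,\underline{X}_i)$. By Lemma~\ref{cat over finite}, $S$ is $\aleph_0$-categorical over $\underline{X}_i$, so $\Aut(S;\underline{X}_i)$ has only finitely many orbits on $S^n$, and in particular on $A_i^n\subseteq S^n$, for every $n\geq 1$. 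The key observation is that any $\phi\in\Aut(S;\underline{X}_i)$ fixes the pivot $\underline{X}_i$, so p.r.c.\ forces $\phi|_{A_i}$ to be a bijection of $A_i$, and because $A_i$ is a subsemigroup this restriction lies in $\Aut A_i$. Therefore orbits of $\Aut(S;\underline{X}_i)$ on $A_i^n$ are contained in orbits of $\Aut A_i$ on $A_i^n$, so the latter are also finite in number; since $A_i\subseteq S$ is countable, the RNT delivers $\aleph_0$-categoricity of $A_i$.

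The point requiring care is the passage from ``$\phi|_{A_i}$ is a bijection'' to ``$\phi|_{A_i}\in\Aut A_i$'': this uses exactly the subsemigroup hypothesis, and explains why the finiteness-up-to-isomorphism and $\aleph_0$-categoricity conclusions need the extra assumption that each $A_i$ is a subsemigroup, whereas the bare cardinality statement does not. No genuine obstacle arises beyond keeping track of which group of automorphisms is acting, and on what.
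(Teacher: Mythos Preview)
Your proof is correct and follows essentially the same approach as the paper. The only cosmetic difference is in the $\aleph_0$-categoricity step for a single $A_i$: the paper appends the pivot $\underline{X}_i$ to $n$-tuples of $A_i$ and bounds $|A_i^n/\sim_{A_i,n}|$ by $|S^{n+t}/\sim_{S,n+t}|$ directly, whereas you invoke Lemma~\ref{cat over finite} to pass to $\Aut(S;\underline{X}_i)$ first; these are the same argument in slightly different packaging.
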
 

\begin{proof} Suppose for some $i\neq j$ we have $\underline{X}_i \, \sim_{S,t} \, \underline{X}_j$ via $\phi\in \text{Aut}(S)$, say. Then $A_i\phi=A_j$ and it follows that both $|\{|A_i|:i\in I\}|$ and number of non-isomorphic elements of $\{A_i:i\in I\}$ is bound by the number of $t$-automorphism types of $S$, which is finite by the $\aleph_0$-categoricity of $S$.

Suppose $A_i$ forms a subsemigroup of $S$. Let $\underline{X}_i=(x_{i1},\dots,x_{it})$, and suppose $\underline{a}=(a_1,\dots,a_n)$ and $ \underline{b}=(b_1,\dots,b_n)$ are a pair of $n$-tuples of $A_i$ such that $(\underline{a},\underline{X}_i) \, \sim_{S,n+t} \, (\underline{b},\underline{X}_i)$ via $\phi\in \text{Aut}(S)$, say. Then $\underline{X}_i\phi=\underline{X}_i$ and so $\phi|_{ A_i }$ is an automorphism of $A_i$ as $(A_i,\underline{X}_i)$ is a $t$-pivoted r.c. subsemigroup. Moreover, $\underline{a}\phi|_{A_i} =\underline{a}\phi =  \underline{b}$ and so $\underline{a} \, \sim_{A_i,n} \, \underline{b}$. We have thus shown that 
\[ | A_i^n / \sim_{A_i,n}|  \leq |S^{n+t}/\sim_{S,n+t}|< \aleph_0
\] 
for each $n\geq 1$, since $S$ is $\aleph_0$-categorical. Hence $A_i$ is $\aleph_0$-categorical by the RNT.  
\end{proof}

\begin{corollary}\label{J} Let $S$ be an $\aleph_0$-categorical semigroup.  Then
there are only finitely many principal (left/right) ideals, up to isomorphism, and these are all $\aleph_0$-categorical. 
\end{corollary}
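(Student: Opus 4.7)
The plan is to exhibit each family of principal ideals as a system of 1-pivoted pairwise relatively characteristic subsemigroups of $S$, indexed by the elements of $S$ themselves, and then invoke Proposition~\ref{pairwise-rel-char} directly.

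First I would fix the three families to consider: the principal right ideals $\{aS^1 : a \in S\}$, the principal left ideals $\{S^1a : a \in S\}$, and the principal two-sided ideals $\{S^1aS^1 : a \in S\}$. Each of these is a subsemigroup of $S$ (being an ideal of $S^1$), so we are in a position to apply the stronger conclusion of Proposition~\ref{pairwise-rel-char}. The pivot attached to each member will be the 1-tuple $\underline{X}_a = (a)$.

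The key step is verifying the p.r.c. condition in the form given by Lemma~\ref{rel-char equiv}: if $\phi \in \Aut(S)$ satisfies $\underline{X}_a\phi = \underline{X}_b$, i.e.\ $a\phi = b$, then each of the relevant ideals based at $a$ maps into the corresponding one based at $b$. For the right ideal case, write $aS^1 = \{a\} \cup aS$; since $\phi$ is an automorphism of $S$ we have $\phi(S) = S$, and so
\[
\phi(aS^1) = \{a\phi\} \cup (a\phi)\phi(S) = \{b\} \cup bS = bS^1.
\]
The same observation, applied on the other side, handles $S^1a$, and combining both sides handles $S^1aS^1$. (When $S$ already has an identity, $S^1 = S$ and $\phi$ fixes the identity; when it does not, $S^1 = S \cup \{1\}$ and $\phi$ extends uniquely by fixing $1$, so in either case the displayed equality is clean.) This shows that each of the three families is a system of 1-pivoted p.r.c. subsemigroups of $S$.

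Finally, applying Proposition~\ref{pairwise-rel-char} to each of the three systems immediately gives that there are only finitely many isomorphism types appearing among the principal right (respectively left, two-sided) ideals, and that each such ideal is itself $\aleph_0$-categorical. I do not anticipate any genuine obstacle here — the result is essentially a corollary in the direct sense, with the only thing to be careful about being the $S^1$ bookkeeping when $S$ lacks an identity, which is handled by the routine remark above.
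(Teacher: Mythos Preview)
Your proof is correct and follows essentially the same approach as the paper: exhibit the principal ideals as a system of 1-pivoted p.r.c.\ subsemigroups with pivot $a$, verify the inclusion condition of Lemma~\ref{rel-char equiv} using the automorphism (extended by $1\phi=1$), and invoke Proposition~\ref{pairwise-rel-char}. The only cosmetic difference is that the paper writes out the two-sided case $S^1aS^1$ explicitly and defers the one-sided cases to ``a similar result holds'', whereas you spell out the right-ideal case first.
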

\begin{proof} We show that  $\{(S^1aS^1,a):a\in S\}$ forms a system of 1-pivoted p.r.c. subsets of $S$. To see this, let $\phi\in \text{Aut(S)}$ be such that $a\phi=b$, and let $x\in S^1aS^1$.
 Then there exists $u,v\in S^1$ with $x=uav$, and so by interpreting $1\phi$ as 1 we have
 \[ x\phi=(u\phi)(a\phi)(v\phi)=(u\phi)b(v\phi)\in S^1bS^1,
 \]
 and the result follows by Lemma \ref{rel-char equiv}. A similar result holds for principal left/right ideals. 
\end{proof}

Motivated by Green's relations, we now give a method for constructing systems of $t$-pivoted p.r.c. subsets of a semigroup via certain equivalence relations. 
Let $\phi:S\rightarrow T$ be an isomorphism between semigroups $S$ and $T$, and $\tau_S$ and $\tau_T$ be equivalence relations on $S$ and $T$, respectively. We call $\tau_S$ and $\tau_T$ \textit{preserved under $\phi$} if $a \, \tau_S \, b$ if and only if $a\phi \, \tau_T \, b\phi$ for each $a,b\in S$. This is clearly equivalent to 
\[  (x{\tau_S})\phi=(x\phi){\tau_T} \quad (\forall x\in S),
\] 
where $\phi$ is applied pointwise, and yet again to $\overline{\phi}:
S/\tau_S\rightarrow T/\tau_T$ given by
\[ [x]_{\tau_S}\overline{\phi}=[x\phi]_{\tau_T}  \quad (\forall x\in S)\]
being a well-defined bijection.
If $S=T$ then we say that $\tau_S$ is \textit{preserved under $\phi$}. 

Note that if $\tau$ is an equivalence relation on a semigroup $S$ then 
\[ \text{Aut}(S)[\tau]:=\{\phi \in \text{Aut}(S):\tau \text{ is preserved under } \phi\}
\] 
 is a subgroup of Aut($S$). 

If $\text{Aut}(S)=\text{Aut}(S)[\tau]$ then we call $\tau$ \textit{preserved under automorphisms} (of $S$).

\begin{example}\label{ex:congs}\begin{enumerate}\item For any semigroup $S$, if $U$ is a characteristic subset, then 
$\langle U\times U\rangle$
is preserved by automorphisms. 
\item  If $S$ is an inverse semigroup, then the least group congruence $\sigma$ on $S$ given by 
 \[ a \, \sigma \, b \Leftrightarrow (\exists e\in E(S)) \quad ea=eb,
 \] 
 is preserved by automorphisms. 
 \item If $\rho$ is a relation preserved by automorphisms, then so too are the congruences
 $\rho^{\sharp}=\langle \rho\rangle$ and $\rho^{\flat}$, where $\rho^{\flat}$ is the largest congruence contained in $\rho$. 
 \item If $S$ is an inverse semigroup then $\mu$, the maximum idempotent-separating congruence on $S$, is  preserved by automorphisms. 
 \end{enumerate} 
 \end{example} 
 \begin{proof} Statements (1) and (3) are clear. We remark that if $\sigma$ is given by the formula in (2), then
 $\sigma=\langle E(S)\times E(S)\rangle$, and for (4) we require the fact that  for any inverse semigroup $S$, we have $\mu=\mathcal{H}^{\flat}$ \cite[Proposition 5.3.7]{Howie94}. \end{proof}

The following lemma is then immediate from Proposition~\ref{pairwise-rel-char}.

\begin{lemma} \label{equiv classes rel} Let $S$ be a semigroup and $\tau$ be an equivalence relation on $S$, preserved under automorphisms of $S$. Then  $\{(x\tau,x):x\in S\}$ forms a system of 1-pivoted p.r.c. subsets of $S$, so that there are only finitely many cardinalities of $\tau$-classes and only finitely many subsemigroup $\tau$-classes, up to isomorphism.
\end{lemma}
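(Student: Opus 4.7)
The plan is to verify the system-of-p.r.c.-subsets claim directly from the definitions, after which the two finiteness conclusions drop out of Proposition~\ref{pairwise-rel-char}.

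First I would apply the criterion in Lemma~\ref{rel-char equiv}: it is enough to show that whenever $\phi\in\Aut S$ and $x,y\in S$ satisfy $x\phi=y$, one has $(x\tau)\phi\subseteq y\tau$. So fix such $\phi,x,y$ and take $a\in x\tau$, meaning $a\,\tau\,x$. Since $\tau$ is preserved under automorphisms of $S$, we have $\phi\in\Aut(S)[\tau]$, whence $a\phi\,\tau\,x\phi=y$, i.e.\ $a\phi\in y\tau$. Thus $\{(x\tau,x):x\in S\}$ is a system of $1$-pivoted p.r.c. subsets of $S$, as required.

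Next I would appeal to Proposition~\ref{pairwise-rel-char} applied to this system: since $S$ is \ale\ (which is implicit in the way this lemma is to be used via Proposition~\ref{pairwise-rel-char}; note however that Proposition~\ref{pairwise-rel-char} requires this hypothesis, so the lemma implicitly assumes $S$ is $\aleph_0$-categorical), the set $\{|x\tau|:x\in S\}$ is finite, giving the first finiteness statement. For the second, I would restrict the system to those pivots $x$ for which $x\tau$ happens to be a subsemigroup of $S$: any sub-collection of a system of $t$-pivoted p.r.c. subsets is again such a system, so $\{(x\tau,x):x\tau\text{ is a subsemigroup of }S\}$ is a system of $1$-pivoted p.r.c. subsemigroups of $S$. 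The second half of Proposition~\ref{pairwise-rel-char} then yields finitely many such subsemigroup classes up to isomorphism (and each is $\aleph_0$-categorical).

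There is no real obstacle here; the content is really just unpacking the definition of $\tau$ being preserved under automorphisms and matching it to condition (ii) of Lemma~\ref{rel-char equiv}. The only subtlety worth flagging in the write-up is that the ``subsemigroup'' conclusion requires passing to a sub-system, which is legitimate because the p.r.c. property is clearly inherited by sub-collections (as noted in the paragraph following Definition~\ref{fprc}).
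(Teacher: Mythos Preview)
Your argument is correct and matches the paper's approach, which simply states that the lemma is immediate from Proposition~\ref{pairwise-rel-char}. Your observation that the finiteness conclusions tacitly require $S$ to be $\aleph_0$-categorical (since Proposition~\ref{pairwise-rel-char} carries that hypothesis) is a valid point: the lemma as stated omits this assumption, though it is clearly intended from context.
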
 

\begin{corollary}\label{cor:green} Let $S$ be an $\aleph_0$-categorical semigroup. Then for any of Green's relations $\mathcal{K}$, we have
\[|\{ |K_a|:a\in S\}|<\aleph_0.\]
Moreover, if $U$ is a transversal of the set of $\mathcal{K}$-classes that are subsemigroups, there are only finitely many $K_u$-classes $(u\in U)$, up to isomorphism, and each $K_u$ is $\aleph_0$-categorical. 

In particular, there are only finitely many maximal subgroups, up to isomorphism, and each of these is $\aleph_0$-categorical. 
\end{corollary}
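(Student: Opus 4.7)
The plan is to reduce the statement to Lemma \ref{equiv classes rel} and Proposition \ref{pairwise-rel-char}, via the observation that each of Green's relations is preserved under automorphisms of $S$.

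First, I would check that each of $\mathcal{R}, \mathcal{L}, \mathcal{J}, \mathcal{H}, \mathcal{D}$ lies in $\Aut(S)[\mathcal{K}]$. Any $\phi \in \Aut S$ extends to a bijection of $S^1$ fixing the adjoined identity (if any), and then $(aS^1)\phi = (a\phi)S^1$. Hence $aS^1 = bS^1$ iff $(a\phi)S^1 = (b\phi)S^1$, so $\mathcal{R}$ is preserved; an identical argument handles $\mathcal{L}$ and $\mathcal{J}$. Since $\mathcal{H} = \mathcal{R}\cap\mathcal{L}$ and $\mathcal{D} = \mathcal{R}\circ\mathcal{L}$, both are preserved too.

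Next, applying Lemma \ref{equiv classes rel} with $\tau = \mathcal{K}$, I obtain that $\{(K_x, x) : x \in S\}$ is a system of $1$-pivoted p.r.c.\ subsets of $S$. Proposition \ref{pairwise-rel-char} then yields $|\{|K_a| : a \in S\}| < \aleph_0$, giving the first claim.

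For the second claim, let $U$ be a transversal of those $\mathcal{K}$-classes that happen to be subsemigroups. Restricting the above system to $U$ gives a system $\{(K_u,u):u\in U\}$ of $1$-pivoted p.r.c.\ \emph{subsemigroups} of $S$, so the subsemigroup part of Proposition \ref{pairwise-rel-char} applies directly: there are only finitely many $K_u$ up to isomorphism, and each is $\aleph_0$-categorical.

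Finally, the statement on maximal subgroups is the special case $\mathcal{K} = \mathcal{H}$, since maximal subgroups of $S$ are precisely the $\mathcal{H}$-classes of idempotents, each of which is a group and hence a subsemigroup. There is no genuine obstacle here: the argument is essentially bookkeeping once one verifies the invariance of Green's relations under automorphisms, which is the only substantive (and routine) input.
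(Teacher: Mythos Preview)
Your proof is correct and follows essentially the same route as the paper: verify that each Green's relation is preserved by automorphisms, invoke Lemma~\ref{equiv classes rel} to obtain the system $\{(K_a,a):a\in S\}$ of $1$-pivoted p.r.c.\ subsets, and then apply Proposition~\ref{pairwise-rel-char}. The only quibble is the phrasing ``each of $\mathcal{R},\mathcal{L},\mathcal{J},\mathcal{H},\mathcal{D}$ lies in $\Aut(S)[\mathcal{K}]$'': you mean $\Aut(S)=\Aut(S)[\mathcal{K}]$ for each such $\mathcal{K}$.
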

\begin{proof} 
Each Green's relation is preserved by   automorphisms. 
Consequently, for any semigroup $S$ and any
$K\in \{ \ar,\el,\eh,\dee,\jay\}$, we have  $\{(K_a,a):a\in S\}$ as a  system of 1-pivoted p.r.c. subsets of $S$. The result then follows from Lemma~\ref{equiv classes rel}. 
\end{proof}

A similar statement to the above also holds for Green's *-relations, and Green's
$\widetilde{\phantom{\mathcal{R}}}$-relations \cite{gould:notes}. 
It is worth exercising  some caution here. In the corollary above the maximal subgroups are $\aleph_0$-categorical semigroups, while earlier investigations into the $\aleph_0$-categoricity of groups considered them as a set with a single binary operation, a single unary operation (inverse), and a single constant (the identity). However, since a semigroup automorphism of a group is necessarily a group automorphism, it follows from the RNT that our two concepts of $\aleph_0$-categoricity of a group coincide, and we can write \textit{$\aleph_0$-categorical group} without ambiguity.

Much like the situation with characteristic subsets, for results relating to inherited $\aleph_0$-categoricity of quotients we require only that congruences are preserved by all automorphisms fixing a finite number of elements. This leads us to the following definition.

\begin{definition} Let $\tau$ be an equivalence relation on a semigroup $S$ and $\underline{X}$ a  tuple of $S$. 
We say that  $\tau$ is  \textit{$\underline{X}$-relatively automorphism preserved ($\underline{X}$-r.a.p.) with pivot $\underline{X}$}, if whenever $\phi\in \text{Aut}(S)$ is such that $\underline{X}\phi=\underline{X}$, then $\phi$ preserves $\tau$. 
\end{definition}

We note that, as with $\underline{X}$-pivoted r.c. subsets, there exists connections between definable sets of ordered pairs of a semigroup and $\underline{X}$-r.a.p. equivalence relations. 

\begin{lemma} Let $S$ be a semigroup, let $\underline{X}\in S^t$ for some $t\in\N^0$,  and $\tau$  an $\underline{X}$-r.a.p. equivalence relation on $S$. For each $a\in S$, let $\underline{X}_a$ be the $(t+1)$-tuple given by $(\underline{X},a)$.  Then $\{(a\tau,\underline{X}_a):a\in S\}$ forms a system of $(t+1)$-pivoted p.r.c. subsets of $S$. 
\end{lemma}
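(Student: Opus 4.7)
The plan is to verify the definition of a system of $(t+1)$-pivoted p.r.c. subsets directly, and in fact to use the equivalent formulation provided by Lemma~\ref{rel-char equiv}, which reduces the task to checking a one-sided inclusion rather than bijectivity.

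First I would fix $a, b \in S$ and an automorphism $\phi \in \Aut(S)$ with the property that $\underline{X}_a \phi = \underline{X}_b$. Unpacking $\underline{X}_a = (\underline{X}, a)$ and $\underline{X}_b = (\underline{X}, b)$ coordinate-by-coordinate, this single hypothesis splits into two pieces: $\underline{X}\phi = \underline{X}$ (the first $t$ coordinates match), and $a\phi = b$ (the final coordinate).

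The key observation is that the hypothesis $\underline{X}\phi = \underline{X}$ is precisely what is required to invoke the $\underline{X}$-r.a.p. property of $\tau$: it tells us that $\phi$ preserves $\tau$. Now for any $c \in a\tau$ we have $c\, \tau\, a$, so by preservation we obtain $c\phi\, \tau\, a\phi = b$, giving $c\phi \in b\tau$. Hence $(a\tau)\phi \subseteq b\tau$, and the criterion of Lemma~\ref{rel-char equiv}(ii) is met, so $\{(a\tau, \underline{X}_a) : a \in S\}$ is indeed a system of $(t+1)$-pivoted p.r.c. subsets.

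There is no real obstacle here; the argument is essentially a careful unpacking of definitions, where the only substantive step is recognising that fixing $\underline{X}_a$ pointwise forces the pivot $\underline{X}$ to be fixed, which then triggers the $\underline{X}$-r.a.p. hypothesis. The inclusion $(a\tau)\phi \subseteq b\tau$ suffices thanks to Lemma~\ref{rel-char equiv}, so I do not need to argue separately for the reverse inclusion (it comes for free by applying the same reasoning to $\phi^{-1}$, which also fixes $\underline{X}$).
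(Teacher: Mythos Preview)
Your proof is correct and follows essentially the same approach as the paper's: both unpack $\underline{X}_a\phi = \underline{X}_b$ into $\underline{X}\phi = \underline{X}$ and $a\phi = b$, invoke the $\underline{X}$-r.a.p. hypothesis to get that $\phi$ preserves $\tau$, and conclude that $(a\tau)\phi = b\tau$. The only cosmetic difference is that the paper writes the equality $(a\tau)\phi = b\tau$ directly (using the characterisation of preservation as $(x\tau)\phi = (x\phi)\tau$), whereas you show the inclusion $(a\tau)\phi \subseteq b\tau$ and then appeal to Lemma~\ref{rel-char equiv}.
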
  

\begin{proof} Let $\phi$ be an automorphism of $S$ such that $\underline{X}_a\phi=\underline{X}_b$ for some $a,b\in S$. Then $\underline{X}\phi=X$ so that $\tau$ is preserved under $\phi$, and $a\phi=b$. Hence 
\[ (a{\tau})\phi=b\tau.
\] \end{proof}

Our next aim is to use the results above to assess when the $\aleph_0$-categoricity of a semigroup passes to its quotients.


\begin{proposition}
\label{factor} Let $S$ be an $\aleph_0$-categorical semigroup, let $\underline{X}\in S^t$ for some $t\in\N^0$,  and $\rho$ an $\underline{X}$-r.a.p. congruence on $S$. Then  $S/\rho$ is $\aleph_0$-categorical. 
\end{proposition}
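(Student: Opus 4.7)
The plan is to invoke the Ryll-Nardzewski Theorem directly by producing, for each $n$, a surjection from the $\sim_{S,\underline{X},n}$-classes of $S^n$ onto the $\sim_{S/\rho,n}$-classes of $(S/\rho)^n$, and then use Lemma~\ref{cat over finite} to see that the former set is finite.

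First I would set up the lifting. Any $n$-tuple of $S/\rho$ has the form $([a_1]_{\rho},\ldots,[a_n]_{\rho})$ for some $\underline{a}=(a_1,\ldots,a_n)\in S^n$, and I will denote the natural projection $S^n\to (S/\rho)^n$ by $\pi$. The key observation is that if $\phi\in\Aut(S;\underline{X})$, then because $\rho$ is $\underline{X}$-r.a.p., $\phi$ preserves $\rho$. The remark following the definition of a preserved equivalence relation (in the paragraph before Example~\ref{ex:congs}) then gives that the induced map $\overline{\phi}:S/\rho\to S/\rho$, $[x]_\rho\mapsto[x\phi]_\rho$, is a well-defined bijection; since $\phi$ is a homomorphism and $\rho$ is a congruence, $\overline{\phi}$ is in fact an automorphism of $S/\rho$.

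Now I would show that the projection $\pi$ respects the automorphism equivalences: if $\underline{a}\sim_{S,\underline{X},n}\underline{b}$ via some $\phi\in\Aut(S;\underline{X})$, then $\pi(\underline{a})\sim_{S/\rho,n}\pi(\underline{b})$ via $\overline{\phi}$, because $[a_i]_\rho\overline{\phi}=[a_i\phi]_\rho=[b_i]_\rho$ for each $i$. Since $\pi$ is surjective on $n$-tuples, this yields a well-defined surjection
\[
S^n/\!\!\sim_{S,\underline{X},n}\,\longrightarrow\,(S/\rho)^n/\!\!\sim_{S/\rho,n},\qquad [\underline{a}]\mapsto[\pi(\underline{a})].
\]
Hence $|(S/\rho)^n/\!\!\sim_{S/\rho,n}|\leq|S^n/\!\!\sim_{S,\underline{X},n}|$.

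Finally, since $S$ is $\aleph_0$-categorical, Lemma~\ref{cat over finite} tells us that $S$ is $\aleph_0$-categorical over $\underline{X}$, so the right-hand side is finite for every $n\geq 1$. Thus $(S/\rho)^n$ has only finitely many $n$-automorphism types, and the Ryll-Nardzewski Theorem delivers that $S/\rho$ is $\aleph_0$-categorical. There is no real obstacle here: the only delicate point is verifying that $\phi$ descends to an automorphism of $S/\rho$, and this is exactly the content of the $\underline{X}$-r.a.p.\ hypothesis combined with $\rho$ being a congruence.
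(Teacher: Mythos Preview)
Your argument is correct and essentially the same as the paper's: both show that an automorphism of $S$ fixing $\underline{X}$ descends to an automorphism of $S/\rho$, yielding the bound $|(S/\rho)^n/\sim_{S/\rho,n}|\leq|S^n/\sim_{S,\underline{X},n}|$. The only cosmetic difference is that the paper appends $\underline{X}$ to the $n$-tuple and bounds by $|S^{n+t}/\sim_{S,n+t}|$ directly, whereas you phrase it via $\sim_{S,\underline{X},n}$ and invoke Lemma~\ref{cat over finite}; these are equivalent.
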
 

\begin{proof} Suppose $\underline{X}\in S^t$ and let $\underline{a}=(a_1 \rho, \dots , a_n \rho)$ and $\underline{b}=(b_1 \rho, \dots , b_n \rho)$  be a pair of $n$-tuples  of $S/\rho$ such that $(a_1,\dots,a_n,\underline{X}) \sim_{S,n+t} (b_1,\dots,b_n,\underline{X})$ 
 via $\phi\in \text{Aut}(S)$, say. Then $\underline{X}\phi=\underline{X}$, so that $\rho$ is preserved under the automorphism $\phi$, and there is thus an automorphism $\psi$ of $S/\rho$ given by 
\[ [a]_{\rho} \psi = [a\phi]_{ \rho} \quad (a\rho \in S/\rho). 
\] 
Since $[a_k]_{ \rho} \psi = [a_k \phi]_{\rho}=[b_k ]_{\rho}$ for each $1\leq k \leq n$, we have $\underline{a} \sim_{S/\rho, n} \underline{b}$, and so
\[ |(S/ \rho)^n / \sim_{S/ \rho, n}| \leq |S^{n+t} / \sim_{S,n+t}|<\aleph_0.
\] 
as $S$ is $\aleph_0$-categorical. Hence $S/\rho$ is $\aleph_0$-categorical. 
\end{proof} 

If we drop the condition on Proposition \ref{factor} that the congruence is relatively automorphism preserving then the statement is no longer true. 
An example of an $\aleph_0$-categorical group with a non $\aleph_0$-categorical quotient group  is given by Rosenstein  \cite{Ros76}.


\begin{corollary}\label{f.g cong cat} Let $S$ be an $\aleph_0$-categorical semigroup.
\begin{enumerate}\item If $\rho$ is a congruence preserved  by automorphisms, then 
$S/\rho$ is  $\aleph_0$-categorical.
\item If $S$ is inverse, then $S/\sigma$ is  an $\aleph_0$-categorical group.
\item The semigroup $S/\eh^{\flat}$ is  $\aleph_0$-categorical, so that if $S$ is inverse, then
$S/\mu$ is  $\aleph_0$-categorical.
\item If $\rho$ is a finitely generated congruence, then $S/\rho$ is  $\aleph_0$-categorical. 
\item If $I$ is an $\underline{X}$-pivoted r.c. ideal of $S$ for some finite tuple $\underline{X}$ of elements of $S$, then  $S/I$ is $\aleph_0$-categorical. 
\end{enumerate}
\end{corollary}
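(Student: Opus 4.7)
The plan is to derive each of (1)--(5) as a direct consequence of Proposition~\ref{factor}, by exhibiting in each case a finite tuple $\underline{X}$ (possibly empty) such that the congruence in question is $\underline{X}$-r.a.p. There is no serious obstacle here; the work is entirely in choosing the correct pivot and verifying the preservation property.

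For (1), take $\underline{X}$ to be the empty tuple ($t=0$); a congruence preserved by every automorphism is vacuously $\underline{X}$-r.a.p., so Proposition~\ref{factor} applies. For (2), I would combine (1) with Example~\ref{ex:congs}(2), which already tells us that $\sigma$ is preserved under automorphisms; the quotient $S/\sigma$ is a group by construction. For (3), Corollary~\ref{cor:green} shows that $\mathcal{H}$ is preserved by automorphisms, and then Example~\ref{ex:congs}(3) gives that $\mathcal{H}^{\flat}$ is also preserved by automorphisms, so (1) applies; the statement for $\mu$ in the inverse case follows from the identity $\mu=\mathcal{H}^{\flat}$ noted in the proof of Example~\ref{ex:congs}(4).

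For (4), suppose $\rho$ is generated by finitely many pairs $(a_1,b_1),\dots,(a_k,b_k)$. I would set $\underline{X}=(a_1,b_1,\dots,a_k,b_k)\in S^{2k}$. Any $\phi\in\Aut S$ fixing $\underline{X}$ fixes each generating pair, so the image under $\phi$ (applied pointwise) of the generating set equals itself; hence $\phi$ preserves the congruence they generate. Thus $\rho$ is $\underline{X}$-r.a.p., and Proposition~\ref{factor} gives $\aleph_0$-categoricity of $S/\rho$. For (5), I would consider the Rees congruence $\rho_I=(I\times I)\cup \Delta_S$ associated with $I$. If $\phi\in\Aut S$ fixes the pivot $\underline{X}$ of $I$, then by the definition of $\underline{X}$-pivoted r.c.\ subset we have $I\phi=I$, so $\phi$ sends pairs in $I\times I$ to pairs in $I\times I$ and pairs in $\Delta_S$ to pairs in $\Delta_S$; hence $\rho_I$ is preserved under $\phi$. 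Therefore $\rho_I$ is $\underline{X}$-r.a.p., and Proposition~\ref{factor} delivers the conclusion.

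Everything reduces to this common template: identify the pivot, confirm that automorphisms fixing the pivot preserve the congruence, and invoke Proposition~\ref{factor}. No step requires substantial additional machinery beyond what has already been established.
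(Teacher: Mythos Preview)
Your proposal is correct and follows essentially the same approach as the paper: parts (1)--(3) are reduced to Proposition~\ref{factor} via Example~\ref{ex:congs}, part (4) uses the tuple of generating pairs as pivot, and part (5) shows the Rees congruence $\rho_I$ is $\underline{X}$-r.a.p.\ and again invokes Proposition~\ref{factor}. The paper's proof is slightly terser but the content is identical.
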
 

\begin{proof} (1)-(3) follow from Example~\ref{ex:congs} 
and Proposition~\ref{factor}. For (4) we let $\rho=\langle (u_1,v_1),\hdots,(u_r,v_r)\rangle$ be a finitely generated congruence on $S$
and let $\underline{X}=(u_1,v_1,\hdots, u_n,v_n)$.
It is easy to see from the explicit  description of  
$\rho$ (see  \cite[Proposition 1.5.9]{Howie94}) that $\rho$ is an $\underline{X}$-pivoted r.a.p. congruence with pivot $\underline{X}$.  The result is then immediate from Proposition \ref{factor}. 

For (5), suppose that $I$ is an $\underline{X}$-p.r.c. ideal of $S$.  Let $\phi$ be an automorphism of $S$ which fixes $\underline{X}$, so that $I\phi=I$ since $I$ is an $\underline{X}$-pivoted r.c ideal.
 Then, for any $a,b\in S$, we have
\[  a \, \rho_I \, b \Leftrightarrow [a=b \text{ or } a,b\in I] \Leftrightarrow [a\phi=b\phi \text{ or } a\phi,b\phi\in I] \Leftrightarrow  a\phi \, \rho_I \, b\phi,
\] 
thus showing that $\rho_I$ is an $\underline{X}$-r.a.p. congruence, and once more we call upon Proposition~\ref{factor}.
\end{proof} 


As we have seen in Corollaries \ref{excluded} and \ref{f.g cong cat}, the RNT is adept at dealing with a range of finiteness conditions. We end this section by studying a final finiteness condition:  equivalence relations on a semigroup with finite equivalence classes. 

Let $S$ be a semigroup and $\tau$ an equivalence relation on $S$. For each $n\geq 1$, define an equivalence relation $\#_{S,\tau,n}$ on $S^n$ by $(a_1,\dots,a_n) \, \#_{S,\tau,n} \, (b_1,\dots,b_n)$ if and only if there exists an automorphism $\phi$ of $S$ such that $(a_k\tau)\phi=b_k\tau$ for each $1\leq k \leq n$.
 
\begin{proposition}\label{finite classes} Let $S$ be a semigroup and  $\tau$ an equivalence on $S$ with each $\tau$-class being finite.
 Then $|S^n/\#_{S,\tau,n}|$ is finite for each $n\geq 1$ if and only if $S$ is $\aleph_0$-categorical and $A=\{|m\tau|:m\in S\}$ is finite.  
\end{proposition}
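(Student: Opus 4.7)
The plan is to handle the two directions separately, with the backward direction proceeding by ``expanding'' each $n$-tuple to a larger tuple enumerating the relevant $\tau$-classes, and the forward direction by introducing an auxiliary equivalence that refines both $\sim_{S,n}$ and $\#_{S,\tau,n}$.

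For the backward direction, assume $S$ is $\aleph_0$-categorical and $A$ is finite; fix $n\ge 1$. I would stratify the $n$-tuples of $S$ by their size-type $(|a_1\tau|,\ldots,|a_n\tau|)\in A^n$, of which there are only finitely many since $A$ is finite. For a fixed size-type $(c_1,\ldots,c_n)$ with $L=c_1+\cdots+c_n$, extend each such $n$-tuple $\underline{a}$ to an $L$-tuple $\underline{a}^{*}$ by concatenating, for each $k$, an arbitrary enumeration of $a_k\tau$ beginning with $a_k$. If $\underline{a}^{*}\sim_{S,L}\underline{b}^{*}$ via some $\phi\in\Aut(S)$, then $\phi$ sends the $k$-th block of $\underline{a}^{*}$ entrywise to the $k$-th block of $\underline{b}^{*}$; since these blocks enumerate all of $a_k\tau$ and $b_k\tau$ respectively, we get $\phi(a_k\tau)=b_k\tau$, whence $\underline{a}\,\#_{S,\tau,n}\,\underline{b}$. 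Since $|S^L/\sim_{S,L}|$ is finite by the RNT, the map $\underline{a}^{*}\mapsto [\underline{a}]_{\#_{S,\tau,n}}$ factors through $\sim_{S,L}$, bounding the number of $\#_{S,\tau,n}$-classes of tuples of that size-type, and summing over the finitely many size-types gives $|S^n/\#_{S,\tau,n}|<\aleph_0$.

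For the forward direction, assume $|S^n/\#_{S,\tau,n}|<\aleph_0$ for every $n$. Finiteness of $A$ is immediate: if $a\,\#_{S,\tau,1}\,b$ via $\phi$, then $\phi$ restricts to a bijection $a\tau\to b\tau$, so $|a\tau|=|b\tau|$ and hence $|A|\le |S/\#_{S,\tau,1}|<\aleph_0$; set $N=\max A$. To deduce $\aleph_0$-categoricity, I would introduce the auxiliary relation $\equiv$ on $S^n$ defined by $\underline{a}\equiv\underline{b}$ iff there exists a single $\phi\in\Aut(S)$ satisfying \emph{both} $\phi(a_k)=b_k$ \emph{and} $\phi(a_k\tau)=b_k\tau$ for every $k$. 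Routine checks show that $\equiv$ is an equivalence relation contained in both $\sim_{S,n}$ and $\#_{S,\tau,n}$.

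The main obstacle is to show $|S^n/\equiv|<\aleph_0$; once this is done, $\equiv\,\subseteq\,\sim_{S,n}$ gives $|S^n/\sim_{S,n}|\le |S^n/\equiv|$, and the RNT yields $\aleph_0$-categoricity. To bound $|S^n/\equiv|$, fix a $\#_{S,\tau,n}$-class $\mathcal{C}$ with representative $\underline{a}$. For any $\underline{b}\in\mathcal{C}$ with witness $\phi$ (so $\phi(a_k\tau)=b_k\tau$), set $\underline{c}:=(\phi^{-1}(b_1),\ldots,\phi^{-1}(b_n))$; then $c_k\in\phi^{-1}(b_k\tau)=a_k\tau$, so $\underline{c}\in\prod_k a_k\tau$, and $\phi$ itself witnesses $\underline{c}\equiv\underline{b}$ since $c_k\tau=a_k\tau$ forces $\phi(c_k\tau)=b_k\tau$. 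Hence every $\equiv$-class in $\mathcal{C}$ meets the finite set $\prod_k a_k\tau$, which has at most $N^n$ elements, so $\mathcal{C}$ splits into at most $N^n$ $\equiv$-classes. Therefore $|S^n/\equiv|\le N^n\cdot |S^n/\#_{S,\tau,n}|<\aleph_0$, completing the argument.
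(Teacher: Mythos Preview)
Your proof is correct and follows essentially the same approach as the paper. The backward direction is identical: stratify by the size-type $(|a_1\tau|,\ldots,|a_n\tau|)$ and concatenate enumerations of the $\tau$-classes to reduce to a single automorphism-type question in $S^L$. For the forward direction, the paper uses a direct pigeonhole argument (take an infinite list of $n$-tuples, pass to an infinite sublist in a single $\#_{S,\tau,n}$-class, push each one by its witnessing automorphism into the finite product $\prod_k a_{1k}\tau$, and find two that collide), whereas you wrap the same pull-back idea in the auxiliary relation $\equiv$ to obtain an explicit bound $|S^n/{\equiv}|\le N^n\cdot|S^n/\#_{S,\tau,n}|$; this is a cosmetic repackaging of the same key observation that any $\underline{b}$ in a fixed $\#$-class can be moved by an automorphism to a tuple in the finite set $\prod_k a_k\tau$.
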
 

\begin{proof} 
Suppose that $|S^n/\#_{S,\tau,n}|$ is finite for each $n\geq 1$. Let $Z=\{\underline{a}_i:i\in\mathbb{N}\}$ be an infinite set of $n$-tuples of $S$, where $\underline{a}_i=(a_{i1},\dots,a_{in})$. Since $|S^n/\#_{S,\tau,n}|$ is finite, there exists an infinite subset $\{\underline{a}_i:i\in I\}$ of $Z$ such that $\underline{a}_i \, \#_{S,\tau,n} \,  \underline{a}_j$ for each $i,j\in I$. In particular, for each $i\in I$ there exists an automorphism $\phi_i$ of $S$ with $({a}_{ik}\tau) \phi_i={a}_{1k}\tau$ for each $1 \leq k \leq n$. Hence $a_{ik}\phi_i\in a_{1k}\tau$ for each $1\leq k \leq n$, so that
\[ \underline{a}_i\phi_i\in \{(z_1,\dots,z_n):z_k\in a_{1k}\tau\}. 
\] 
Notice that set $\{(z_1,\dots,z_n):z_k\in a_{1k}\tau\}$ is finite since each $\tau$-class is finite. Consequently, there exists distinct $i,j\in I$ such that $\underline{a}_i\phi_i=\underline{a}_j\phi_j$, so that $\underline{a}_i\phi_i\phi_j^{-1}=\underline{a}_j$. Hence $\underline{a}_i $ and $\underline{a}_j$ are automorphically equivalent. It follows that $S$ contains no infinite set of distinct $n$-automorphism types, and is thus $\aleph_0$-categorical by the RNT. 
 Furthermore, by our usual argument we have that $|A|$ is bound by $|S / \#_{S,\tau,1}|$.
 
Conversely, suppose $S$ is $\aleph_0$-categorical and $A$ is finite. Let  $\underline{m}=(m_1,\dots,m_n)$ and $\underline{m}'=(m_1',\dots,m_n')$ be a pair of $n$-tuples of $S^n$, under the condition that $|m_k\tau|=|m_k'\tau|$ for each $k$. Since each entry of an $n$-tuple of $S^n$ has $|A|$ potential cardinalities for its $\tau$-class, it follows that this condition has $|A|^n$ choices. For each $1\leq k \leq n$, let $m_k\tau=\{a_{k1},\dots,a_{ks_k}\}$ and $m_{k}'\tau =\{b_{k1},\dots,b_{ks_k}\}$, and let $T(n)=s_1+s_2+\cdots + s_n$. Suppose further that 
\[ (a_{11},\dots,a_{1s_1},a_{21},\dots,a_{2s_2},\dots ,a_{ns_n}) \, \sim_{S,T(n)} \, (b_{11},\dots,b_{1s_1},b_{21},\dots,b_{2s_2},\dots,b_{ns_n}),
\]  via $\phi\in \text{\aut}(S)$, say. Note that this condition also has finitely many choices  as $|S^{T(n)}/ \sim_{S,T(n)}|$ is finite for each $n\geq 1$ by the RNT. Moreover, $(m_k\tau)\phi=m_k'\tau$ for each $k$, since $a_{kr}\phi=b_{kr}$ for each $1\leq r \leq s_k$. Hence $\underline{m} \, \#_{S,\tau,n} \, \underline{m}'$, and so $|S^n/\#_{S,\tau,n}|$ is finite by Lemma \ref{counting bits}. 
\end{proof}

\begin{corollary}\label{finite H use E} Let $S$ be a regular semigroup with each maximal subgroup being finite.
 Then $S$ is $\aleph_0$-categorical if and only if $|E(S)^n/\sim_{S,n}|$ is finite for each $n\geq 1$. 
\end{corollary}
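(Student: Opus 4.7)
The forward direction is immediate: if $S$ is $\aleph_0$-categorical, then $|S^n/\sim_{S,n}|$ is finite by the RNT, and hence so is the cardinality of any subset of $S^n/\sim_{S,n}$, in particular $|E(S)^n/\sim_{S,n}|$.

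For the converse, the plan is to apply Proposition~\ref{finite classes} with the equivalence relation $\tau=\eh$. First I would verify that every $\eh$-class of $S$ is finite: since $S$ is regular, every $\dee$-class contains a group $\eh$-class, and by the standard theory of Green's relations all $\eh$-classes within a given $\dee$-class have the same cardinality as the maximal subgroups it contains. By hypothesis these maximal subgroups are finite, so each $\eh$-class of $S$ is finite.

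Next I would estimate $|S^n/\#_{S,\eh,n}|$ in terms of the hypothesis. For each $a\in S$, regularity lets us choose idempotents $e_a,f_a\in E(S)$ with $e_a \, \ar \, a \, \el \, f_a$, so that $H_a=R_{e_a}\cap L_{f_a}$. To a tuple $\underline{a}=(a_1,\dots,a_n)\in S^n$ I associate the $2n$-tuple of idempotents
\[ \underline{E}(\underline{a})=(e_{a_1},f_{a_1},\dots,e_{a_n},f_{a_n})\in E(S)^{2n}. \]
Suppose $\underline{E}(\underline{a})\sim_{S,2n}\underline{E}(\underline{b})$ via $\phi\in\Aut S$. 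Since automorphisms preserve $\ar$ and $\el$, $\phi$ sends $R_{e_{a_i}}$ onto $R_{e_{b_i}}$ and $L_{f_{a_i}}$ onto $L_{f_{b_i}}$, and hence $H_{a_i}\phi=(R_{e_{a_i}}\cap L_{f_{a_i}})\phi=R_{e_{b_i}}\cap L_{f_{b_i}}=H_{b_i}$ for each $i$; that is, $\underline{a}\,\#_{S,\eh,n}\,\underline{b}$. Consequently
\[ |S^n/\#_{S,\eh,n}| \leq |E(S)^{2n}/\sim_{S,2n}|, \]
which is finite by hypothesis. Proposition~\ref{finite classes} now yields that $S$ is $\aleph_0$-categorical (and, as a bonus, that the set of $\eh$-class cardinalities is finite).

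The substantive step is the bounding argument in the middle paragraph: the rest is a direct application of earlier machinery. The only mild subtlety is that the assignment $a\mapsto(e_a,f_a)$ is not canonical, but any fixed choice suffices since we only need the existence of idempotents $e_a,f_a$ realising $H_a$ as the intersection of an $\ar$- and an $\el$-class of idempotents, and the preservation of Green's relations by automorphisms does the rest.
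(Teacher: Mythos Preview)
Your proof is correct and follows essentially the same approach as the paper: both directions coincide with the paper's argument, in particular the converse associates to each $n$-tuple a $2n$-tuple of idempotents realising the $\eh$-classes via $\ar$ and $\el$, bounds $|S^n/\#_{S,\eh,n}|$ by $|E(S)^{2n}/\sim_{S,2n}|$, and then invokes Proposition~\ref{finite classes}. The only cosmetic difference is that the paper cites \cite[Lemma 2.2.3]{Howie94} for the finiteness of all $\eh$-classes, whereas you spell this out directly.
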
 

\begin{proof} If $S$ is $\aleph_0$-categorical, then 
\[ |E(S)^n/\sim_{S,n}|\leq |S^n/\sim_{S,n}|<\aleph_0
\] 
for each $n\geq 1$ by the RNT.

Conversely, suppose $|E(S)^n/\sim_{S,n}|$ is finite for each $n\geq 1$ and consider a pair of $n$-tuples of  $S$ given by $\underline{a}=({a_1},\dots,{a_n})$ and $\underline{b}=({b_1},\,\dots,{b_n})$.  Since $S$ is regular, there exists idempotents $e_i,f_i,\bar{e}_i,\bar{f}_i$ of $S$ with $e_i \, \mathcal{R} \, a_i \, \mathcal{L} \, f_i$ and $\bar{e}_i \, \mathcal{R} \, b_i \, \mathcal{L} \, \bar{f}_i$ for each $1\leq i \leq n$. Suppose further that 
\[ (e_1,f_1,e_2,f_2,\dots,e_n,f_n) \, \sim_{S,2n} \, (\bar{e}_1,\bar{f}_1,\bar{e}_2,\bar{f}_2,\dots,\bar{e}_n,\bar{f}_n), 
\] 
 via $\phi\in \text{Aut}(S)$, say. Then as $\mathcal{R}$ and $\mathcal{L}$ are automorphism preserving we have that $R_{e_i}\phi=R_{\bar{e}_i}$ and $L_{f_i}\phi=L_{\bar{f}_i}$ for each $i$, so that 
\[ H_{a_i}\phi=(R_{a_i}\cap L_{a_i})\phi = (R_{e_i}\cap L_{f_i})\phi=R_{e_i}\phi \cap L_{f_i}\phi = R_{\bar{e}_i}\cap L_{\bar{f}_i}=H_{b_i}. 
\] 
Hence $\underline{a} \, \#_{S,\mathcal{H},n} \, \underline{b}$, and we have thus shown that 
\[ |S^n/\#_{S,\mathcal{H},n}|\leq |E(S)^{2n}/ \sim_{S,2n}| <\aleph_0. 
\] 
Since each maximal subgroup of  $S$ is finite, every $\mathcal{H}$-class of $S$ is finite by \cite[Lemma 2.2.3]{Howie94} and the regularity of $S$. Hence $S$ is $\aleph_0$-categorical by Proposition \ref{finite classes}. 
\end{proof}

\section{Principal factors of an $\aleph_0$-categorical semigroup} \label{Sec princ} 

Our interest in this section is in determining how $\aleph_0$-categoricity effects the principal factors of a semigroup.
 Our main result is that the principal factors of an $\aleph_0$-categorical semigroup behave in much the same way as the maximal subgroups:

\begin{theorem}\label{principal fac} The principal factors of an $\aleph_0$-categorical semigroup $S$ are $\aleph_0$-categorical, and either completely 0-simple, completely simple or null. Moreover, $S$ has only finitely many principal factors, up to isomorphism.  
\end{theorem}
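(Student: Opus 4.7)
The plan is to establish the three assertions in turn: (i) each principal factor $J_a^* = J(a)/I(a)$ is $\aleph_0$-categorical; (ii) each principal factor is null, completely simple, or completely 0-simple; (iii) up to isomorphism there are only finitely many principal factors.

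For (i), the key observation is that both the principal ideal $J(a) = S^1 a S^1$ and the ideal $I(a) = J(a)\setminus J_a$ form $a$-pivoted r.c.\ subsets of $S$: any $\phi\in\Aut(S;a)$ satisfies $J(a)\phi = J(a)$ by the argument in Corollary~\ref{J}, and preserves $J_a$ because $\mathcal{J}$ is preserved by automorphisms (Corollary~\ref{cor:green}), hence preserves $I(a)$. Thus the restriction of such a $\phi$ to $J(a)$ maps $I(a)$ onto itself and induces an automorphism $\phi^{*}$ of $J_a^{*}$. I would then run the following counting argument on $n$-tuples of $J_a^*$. Given $\underline{u},\underline{v}\in (J_a^*)^n$, impose two conditions: (a) they have their zero entries in the same positions; and (b) after deleting the zero entries, the resulting tuples (regarded as tuples in $J_a\subseteq S$) are $\sim_{S,a,k}$-equivalent, where $k$ is their common length. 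Condition (a) has at most $2^n$ choices, and by Lemma~\ref{cat over finite} combined with the $\aleph_0$-categoricity of $S$, condition (b) has only finitely many choices. When both conditions hold, the induced $\phi^*\in\Aut(J_a^*)$ witnesses $\sim_{J_a^*,n}$-equivalence of $\underline{u}$ and $\underline{v}$, so Corollary~\ref{cor:themethod} yields $\aleph_0$-categoricity of $J_a^*$.

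For (ii), I would appeal to the classical Rees structure theorem, which asserts that the principal factor of any semigroup is simple, 0-simple, or null. Since $S$ is periodic by Corollary~\ref{cor:indexperiod}, so is each $J_a^*$, and it is a standard fact that a periodic simple (respectively, 0-simple) semigroup is completely simple (respectively, completely 0-simple). For (iii), I would observe that if $\phi\in\Aut(S)$ with $a\phi = b$, then $\phi$ restricts to an isomorphism $J(a)\to J(b)$ sending $I(a)$ onto $I(b)$, whence an isomorphism $J_a^* \cong J_b^*$. Thus the isomorphism class of the principal factor of $a$ depends only on the $\sim_{S,1}$-class of $a$, and by $\aleph_0$-categoricity of $S$ there are only finitely many such classes.

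The main obstacle lies in (i): automorphisms of $J_a^*$ need not arise from automorphisms of $S$, so one cannot directly pull orbit counts from $S^n$ down to $(J_a^*)^n$. The remedy above is to produce a sufficient condition for $\Aut(J_a^*)$-equivalence that already has only finitely many choices, coming from the (possibly proper) subgroup of $\Aut(J_a^*)$ induced by $\Aut(S;a)$; any additional automorphisms of $J_a^*$ can only merge these orbits further, so finiteness is preserved.
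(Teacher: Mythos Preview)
Your proof is correct and follows essentially the same strategy as the paper's. The only difference in presentation is a minor repackaging of part (i): the paper first invokes Corollary~\ref{J} to get that each $J(a)$ is $\aleph_0$-categorical, then applies Corollary~\ref{f.g cong cat}(5) (using that $I(a)$ is an $a$-pivoted r.c.\ ideal of $J(a)$) to pass to the Rees quotient, whereas you bypass the intermediate step and count types in $J_a^*$ directly using the subgroup of $\Aut(J_a^*)$ induced by $\Aut(S;a)$. Your direct route is arguably cleaner, since it avoids any question of whether automorphisms of $J(a)$ (as opposed to restrictions of automorphisms of $S$) preserve $I(a)$; but in substance both arguments rest on the same observation that $\Aut(S;a)$ already has finitely many orbits on tuples from $J_a$. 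Parts (ii) and (iii) match the paper exactly.
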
 

\begin{proof} For each $a\in S$ let $J(a)=S^1aS^1$ and $I(a)=J(a)\setminus J_a$. Since $S$ is $\aleph_0$-categorical, the ideals $J(a)$ are $\aleph_0$-categorical by Corollary~ \ref{J}. Let $\phi$ be an automorphism of $S$ such that $a\phi=b$. Then $J(a)\phi=J(b)$ as $\{(J(a),a):a\in S\}$ is a system of 1-pivoted p.r.c. subsemigroups of $S$. Moreover,  as $\mathcal{J}$ is preserved under automorphisms we have $J_a\phi=J_b$, and so 
\[ I(a)\phi = (J(a)\setminus J_a)\phi=J(b)\setminus J_b = I(b).
\]
 Consequently, $\{(I(a),a):a\in S\}$ is a system of 1-pivoted p.r.c. subsemigroups of $S$ and, in particular, $I(a)$ is an $a$-pivoted p.r.c. ideal of $J(a)$ for each $a\in S$. Hence $J(a)/I(a)$ is $\aleph_0$-categorical by Corollary~\ref{f.g cong cat}.
 If the kernel $K(S)$ of $S$ exists, that is, the unique minimum ideal of $S$, then it is a $\mathcal{J}$-class of $S$, and is thus $\aleph_0$-categorical. Hence each principal factor of $S$ is $\aleph_0$-categorical. 
 
Moreover, as  $\phi|_{J(a)}$ is an isomorphism from $J(a)$ to $J(b)$ with $I(a)\phi|_{J(a)}=I(b)$, it follows that the isomorphism $\phi|_{J(a)}$ preserves $\rho_{I(a)}$ and $\rho_{I(b)}$, and so $\phi$ induces an isomorphism from $J(a)/I(a)$  to $J(b)/I(b)$. Hence the set $\{J(a)/I(a):a\in S\}$ of non kernel principal factors of $S$ has at most $|S/\sim_{S,1}|$ elements, up to isomorphism. Since $K(S)$ is unique, if it exists, $S$ has only finitely many principal factors, up to isomorphism. 

By \cite[Lemma 2.39]{Clif&Pres61}, the principal factors of $S$ are either 0-simple, simple or null. A periodic (0)-simple semigroup is completely (0-)semigroup (the result for 0-simple semigroups is given in \cite[Corollary 2.56]{Clif&Pres61}, from which the simple case follows). Hence as an $\aleph_0$-categorical semigroup is periodic by Corollary
~\ref{cor:indexperiod}, each principal factor is either completely 0-simple, completely simple or null.
\end{proof} 

Recall that every null semigroup  is $\aleph_0$-categorical  by Example \ref{null cat}. To understand the $\aleph_0$-categoricity of an arbitrary semigroup it is therefore essential to examine the completely simple and completely 0-simple cases. The $\aleph_0$-categoricity of an arbitrary completely (0-)simple semigroup will be the main topic of our subsequent paper. For now, we restrict our attension to the $\aleph_0$-categoricity of inverse completely 0-simple semigroups. 

  The \textit{Brandt semigroup $S$ over a  group $G$ with index set $I$}, denoted $B^0[G;I]$, is the set $(I\times G \times I)\cup \{0\}$ with multiplication $(i,g,j)0=0(i,g,j)=00=0$ and
\[ (i,g,j)(k,h,l) = \left\{
\begin{array}{ll}
(i,gh,l) & \text{if       } j=k,\\
0 & \text{if }  j \neq k.
\end{array}
\right. 
\]
Every Brandt semigroup is an inverse completely 0-simple semigroup and, conversely, an inverse completely 0-simple semigroup is isomorphic to some Brandt semigroup
\cite[Theorem 5.1.8]{Howie94}. Our early interest in Brandt semigroups from an $\aleph_0$-categorical perspective is due to the simplicity with which their automorphisms may be determined. 
The automorphism theorem for Brandt semigroups below is a direct consequence of  \cite[Theorem 3.4.1]{Howie94}:     

\begin{theorem} \label{Brandt iso}  Let $S=B^0[G;I]$  be a Brandt semigroup. Let $\theta$ be an automorphism of $G$, and $\pi$ a bijection of $I$. 
Then the map $\psi:S\rightarrow S$  given by $0\psi=0$ and $(i,g,j)\psi=(i\pi,g\theta,j\pi)$ for each $(i,g,j)\in S\setminus \{0\}$ is an automorphism, denoted $\psi=(\theta;\pi)$. 
Conversely, every autormorphism of $B^0[G;I]$ may be constructed in this manner.
\end{theorem}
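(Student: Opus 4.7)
The plan divides into verifying the two directions of the theorem. For the forward direction, given $\theta \in \Aut(G)$ and a bijection $\pi$ of $I$, I would check directly that the prescribed map $\psi$ is a semigroup automorphism. Bijectivity is immediate since the map defined by $(\theta^{-1}, \pi^{-1})$ provides an inverse. For multiplicativity, the only non-trivial case is where neither factor is zero. If $j = k$ we compute $((i,g,j)(k,h,l))\psi = (i\pi, (gh)\theta, l\pi) = (i\pi, g\theta \cdot h\theta, l\pi)$, which coincides with $(i,g,j)\psi \cdot (k,h,l)\psi$ since $j\pi = k\pi$. If $j \neq k$, injectivity of $\pi$ gives $j\pi \neq k\pi$, so both sides vanish.

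For the converse, let $\psi$ be an arbitrary automorphism of $S$. I would extract $\pi$ and $\theta$ in stages. Since $\{0\}$ is characteristic (being the unique absorbing element), $0\psi = 0$. The non-zero idempotents of $S$ are exactly $\{(i, e_G, i) : i \in I\}$, because $(i, g, j)^2 = (i, g, j)$ forces $j = i$ and then $g = e_G$; this characteristic set is permuted by $\psi$, which gives a bijection $\pi : I \to I$ defined by $(i, e_G, i)\psi = (i\pi, e_G, i\pi)$. Next, for any non-zero $(i, g, j)$, the identity $(i, e_G, i)(i, g, j)(j, e_G, j) = (i, g, j)$, together with its image under $\psi$, forces the outer coordinates of $(i, g, j)\psi$ to be $(i\pi, j\pi)$; hence $(i, g, j)\psi = (i\pi, h_{i,j}(g), j\pi)$ for some function $h_{i,j} : G \to G$. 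Fixing $i_0 \in I$, the restriction of $\psi$ to the maximal subgroup $H_{(i_0, e_G, i_0)}$ is a group isomorphism onto $H_{(i_0\pi, e_G, i_0\pi)}$; identifying both with $G$ via the middle coordinate defines $\theta := h_{i_0, i_0} \in \Aut(G)$.

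The main obstacle is the final step: proving $h_{i,j} = \theta$ for all $i, j \in I$, so that the pair $(\theta, \pi)$ already determines $\psi$ everywhere. My plan would be to track the images of the ``connecting'' elements $(i, e_G, i_0)$ and $(i_0, e_G, j)$ under $\psi$, and then apply $\psi$ to the factorization $(i, g, j) = (i, e_G, i_0)(i_0, g, i_0)(i_0, e_G, j)$. The rigidity imposed by the sandwich matrix of a Brandt semigroup being the Kronecker delta is what should force the otherwise-free parameters to collapse, leaving only $\theta$ and $\pi$. This is the Brandt specialization of the argument carried out in Howie's \cite[Theorem 3.4.1]{Howie94} for general Rees matrix semigroups, as the paper signals.
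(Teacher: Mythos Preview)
Your forward direction is correct, and your plan for the converse is sound up to the final step---but that step cannot close, because the converse of the theorem is in fact false as stated. Carrying out your factorisation $(i,g,j) = (i,e_G,i_0)(i_0,g,i_0)(i_0,e_G,j)$ yields $h_{i,j}(g) = u_i\,(g\theta)\,v_j$ with $u_i := h_{i,i_0}(e_G)$ and $v_j := h_{i_0,j}(e_G)$. Applying $\psi$ to $(i,e_G,i_0)(i_0,e_G,i)=(i,e_G,i)$ forces $v_i = u_i^{-1}$, so $h_{i,j}(g)=u_i\,(g\theta)\,u_j^{-1}$ with $u_{i_0}=e_G$; these are \emph{all} the constraints the Kronecker-delta sandwich matrix imposes, and nothing forces $u_i=e_G$ for $i\neq i_0$. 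Concretely, take $G=\{e,a\}$ cyclic of order $2$, $I=\{1,2\}$, $u_1=e$, $u_2=a$: the map fixing $0$ and sending $(i,g,j)\mapsto(i,\,u_i g u_j,\,j)$ is a non-identity automorphism of $B^0[G;I]$ which is not of the form $(\theta;\pi)$ for any $\theta,\pi$.

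The paper gives no proof here, merely citing \cite[Theorem 3.4.1]{Howie94}; the correct specialisation of that result to Brandt semigroups retains the family $(u_i)_{i\in I}$ as an extra parameter. Fortunately only the forward direction is used subsequently (in the proof of Proposition~\ref{brandt cat}), so the oversight does not affect the paper's results.
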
  

\begin{proposition} \label{brandt cat} A Brandt semigroup $S=B^0[G;I]$ is $\aleph_0$-categorical if and only if $G$ is $\aleph_0$-categorical. 
\end{proposition}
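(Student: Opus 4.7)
The plan is to apply the Ryll--Nardzewski Theorem, using Theorem~\ref{Brandt iso} to describe $\Aut S$. The forward direction is immediate from Corollary~\ref{cor:green}: for any $i\in I$, the maximal subgroup at the idempotent $(i,1_G,i)$ equals $\{(i,g,j)\in S: j=i\}=\{(i,g,i):g\in G\}$ and is isomorphic, as a semigroup (hence as a group), to $G$. So $\aleph_0$-categoricity of $S$ forces $\aleph_0$-categoricity of $G$.

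For the converse, assume $G$ is $\aleph_0$-categorical. Then $G$ is countable, and since $S$ must be countable in order to be $\aleph_0$-categorical, we may assume $I$ is countable as well. Fix $n\geq 1$; the plan is to exhibit three equivalence relations on $S^n$, each with finitely many classes, whose intersection is contained in $\sim_{S,n}$, then invoke Corollary~\ref{cor:themethod}.

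Given $\underline{a},\underline{b}\in S^n$, the first condition requires that $\underline{a}$ and $\underline{b}$ have their zero entries in the same positions (at most $2^n$ classes, as in Example~\ref{natural}(i)). For the non-zero positions, write the corresponding entries of $\underline{a}$ as $(i_r,g_r,j_r)$ and those of $\underline{b}$ as $(i_r',g_r',j_r')$ for $r=1,\dots,m$. The second condition demands that the $2m$-tuples $(i_1,j_1,\dots,i_m,j_m)$ and $(i_1',j_1',\dots,i_m',j_m')$ be $\natural_{I,2m}$-equivalent (Example~\ref{natural}(ii)), giving at most $B_{2m}$ classes. The third condition asks that $(g_1,\dots,g_m)\sim_{G,m}(g_1',\dots,g_m')$, with finitely many classes by the $\aleph_0$-categoricity of $G$.

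From the three conditions one recovers $\theta\in\Aut G$ realising the group equivalence and, via the equality pattern on the $I$-coordinates, a well-defined bijection from the set of distinct indices appearing in the $I$-coordinates of $\underline{a}$ to those of $\underline{b}$. This bijection extends to a bijection $\pi$ of $I$, because the two index sets that have been removed have the same cardinality and $I$ is either finite or countably infinite. Theorem~\ref{Brandt iso} then supplies the automorphism $(\theta;\pi)\in\Aut S$ sending $\underline{a}$ to $\underline{b}$. The main obstacle is bookkeeping in the second condition: the equality pattern must jointly account for both row and column indices (and their coincidences across different non-zero entries) so that the partial bijection is simultaneously well-defined, injective, and extendable; no deeper idea is needed, only care in setting up $\natural_{I,2m}$ on the concatenated index tuple.
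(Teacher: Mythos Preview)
Your proof is correct and follows essentially the same route as the paper. The only cosmetic difference is that the paper invokes Corollary~\ref{excluded} to discard the zero entries at the outset and then works with two conditions on tuples of $S\setminus\{0\}$, whereas you keep the zero-pattern as an explicit first condition and then impose the $\natural_{I,2m}$ and $\sim_{G,m}$ conditions on the non-zero coordinates; the underlying counting argument and the construction of $(\theta;\pi)$ via Theorem~\ref{Brandt iso} are identical.
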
 

\begin{proof}  ($\Rightarrow$) Since $G$ is isomorphic to each non-zero maximal subgroup $B_i=\{(i,g,i):g\in G\}$ of $S$, the result follows from Corollary \ref{cor:green}.

($\Leftarrow$) By the RNT and Corollary \ref{excluded}, to prove the $\aleph_0$-categoricity of $S$ it suffices  to show that the number of $n$-automorphism types of $S^*=S \setminus \{0\}$ is finite for each $n\geq 1$.
 Let $\underline{a}=((i_1,g_1,j_1),\dots, (i_n,g_n,j_n))$ and $\underline{b}=((k_1,h_1,\ell_1),\dots,(k_n,h_n,\ell_n))$ be a pair of $n$-tuples of $S^*$ such that 
\begin{enumerate} 
\item $(i_1,\dots, i_n,j_1,\dots, j_n) \, \natural_{I,2n} \, (k_1,\dots,k_n,\ell_1,\dots,\ell_n)$,
\item   $(g_1,\dots,g_n) \sim_{G,n} (h_1,\dots,h_n)$, via $\theta\in \text{Aut}(G)$, say.
\end{enumerate} 
By condition (1) there exists a bijection $\pi$ from $\{i_1,\dots,i_n,j_1,\dots,j_n\}$  to $\{k_1,\dots,k_n,\ell_1,\dots,\ell_n\}$ given by $i_r\pi=k_r$ and $j_r\pi=\ell_r$ ($1\leq r \leq n$). Moreover, condition (1) has $B_{2n}$ choices, which is finite. By the $\aleph_0$-categoricity of $G$, condition (2)  also has finitely many choices. Take a bijection $\bar{\pi}$ of $I$ which extends $\pi$. Then $\psi=(\theta;\bar{\pi})$ is an automorphism of $S$ by Theorem \ref{Brandt iso}, and is such that 
\[ (i_r,g_r,j_r)\psi=(i_r\bar{\pi},g_r\theta,j_r\bar{\pi})=(i_r\pi, h_r,j_r\pi)=(k_r,h_r,\ell_r)
\] 
for each $1\leq r \leq n$. Hence $\underline{a} \, \sim_{S,n} \, \underline{b}$, and so $S$ is $\aleph_0$-categorical by Lemma \ref{counting bits}. 
\end{proof}

The classification of $\aleph_0$-categorical Brandt semigroups is an example of building $\aleph_0$-categorical semigroups from $\aleph_0$-categorical `ingredients', in this case  groups (and sets). The rest of the article is attributed to investigating  constructions of this form for a number of rudimentary examples, including 0-direct union and semidirect products. 

\section{Building $\aleph_0$-categorical semigroups: direct sums and 0-direct union} \label{Sec 0-direct} 

Let $I$ be an   indexing set and suppose that for each $i\in I$ we have a  monoid $M_i$ with identity 
$1_i$. By the
{\em direct sum} $S=\bigoplus_{i\in I}M_i$ of the monoids $M_i,i\in I$ we mean the submonoid
\[\{ (m_i)_{i\in I}: m_i=1_i\mbox{ for all but finitely many }i\in I\}\]
of the direct product $P=\Pi_{i\in I}M_i$. 
Rosenstein~\cite{Rosenstein73} showed that any group that is a direct sum of copies of finitely many finite groups is  $\aleph_0$-categorical if and only if every group which occurs infinitely often in the sum is abelian. For our purposes we extend his result slightly as    follows.

\begin{lemma}\label{lem:dpgps} (cf. \cite[Theorem 3]{Rosenstein73}) Let $S=\bigoplus_{i\in \N}M_i$ where each $M_i$ is a finite group. Then if $S$ is $\aleph_0$-categorical, all but finitely many of the  $M_i$'s are abelian.  
\end{lemma}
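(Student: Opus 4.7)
The plan is to prove the contrapositive: if infinitely many of the $M_i$ are non-abelian, then $S$ is not $\aleph_0$-categorical. The invariant that drives the argument is the size of the conjugacy class of an element in $S$. Any $\phi \in \Aut S$ sends the conjugacy class of $a\in S$ bijectively onto that of $a\phi$, so elements in the same 1-automorphism type have conjugacy classes of equal (finite) cardinality. Consequently, by Theorem \ref{RNT}, it suffices to produce a sequence in $S$ whose conjugacy class sizes are unbounded.

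Set $J := \{i \in \N : M_i \text{ is non-abelian}\}$ and assume $J$ is infinite. For each $i \in J$ choose $x_i \in M_i \setminus Z(M_i)$; its $M_i$-conjugacy class has at least two elements. For each $n \geq 1$, pick distinct indices $i_1, \dots, i_n \in J$ and define $a_n \in S$ by placing $x_{i_k}$ in coordinate $i_k$ for $1 \leq k \leq n$ and the identity in every other coordinate. Since $a_n$ has finite support, $a_n$ indeed lies in the direct sum $S$.

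Conjugation in $S$ is coordinatewise: for $g = (g_i) \in S$ we have $g a_n g^{-1} = (g_i (a_n)_i g_i^{-1})_{i \in \N}$. Taking $g_i = 1_i$ outside the support of $a_n$ still produces an element of $S$, and lets each $g_{i_k}$ range independently over $M_{i_k}$. Hence the $S$-conjugacy class of $a_n$ has size $\prod_{k=1}^{n} |C^{M_{i_k}}(x_{i_k})| \geq 2^n$. These sizes are unbounded, so the $a_n$ realise infinitely many distinct 1-automorphism types, and $S$ fails to be $\aleph_0$-categorical by Theorem \ref{RNT}.

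The main subtlety to be aware of is that Rosenstein's original result \cite[Theorem 3]{Rosenstein73}, which sits behind the ``cf.'' in the statement, presumes only finitely many isomorphism classes occur among the summands; here the non-abelian $M_i$ may be pairwise non-isomorphic, so his theorem cannot be invoked directly. The conjugacy-class-size invariant circumvents this obstacle because it only requires one non-central element in each non-abelian summand, regardless of which finite group that summand actually is; the argument then goes through uniformly.
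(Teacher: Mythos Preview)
Your proof is correct and follows essentially the same route as the paper's: both pick non-central elements in infinitely many non-abelian summands, stack finitely many of them to form elements of $S$, and use the (finite) size of the conjugacy class as the separating invariant under $\sim_{S,1}$. The only cosmetic difference is that the paper fixes a single sequence $i_1,i_2,\ldots$ and takes nested supports, yielding strictly increasing conjugacy-class sizes $m_1\cdots m_j$, whereas you allow an arbitrary choice of $n$ indices for each $n$ and argue via unboundedness; either version works. (One small notational caution: $C^{G}(x)$ is more commonly used for a centraliser than a conjugacy class, so you may wish to clarify.)
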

\begin{proof} Suppose for contradiction that 
\[M_{i_1}, M_{i_2}, \hdots\]
are all non-abelian. Pick $a_{j}\in M_{i_j}$ with 
$a_{j}\notin Z(M_{i_j})$, so that  $|\{b^{-1}a_jb:b\in M_{i_j}\}|=m_j>1$. 
Let $(\underline{s_j})\in S$ be defined by
\[(\underline{s_j})_{i_k}=a_k\mbox{ for }1\leq k\leq j, (\underline{s_j})_i=1_i\mbox{ else}.\]
Then 
$|\{\underline{b}^{-1}\underline{s_j}\underline{b}:\underline{b}\in S\}|=m_1\hdots m_j$ so that for $i\neq j$ we cannot have  $(\underline{s_i})\sim_{S,1}(\underline{s_j})$,  contradicting the $\aleph_0$-categoricity of $S$. 
\end{proof}

\begin{theorem}\label{mon:ds}  Let $S=\bigoplus_{i\in \N}M_i$ be a direct sum of finite
monoids $M_i$.  Then $S$ is   $\aleph_0$-categorical if and only if 
$S$ is a direct product of a finite monoid and an abelian group of bounded order. 
\end{theorem}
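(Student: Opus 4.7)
The backward direction ($\Leftarrow$) is immediate from preservation results already available: a finite monoid is $\aleph_0$-categorical by Corollary~\ref{cor:finite}, an abelian group of bounded order is $\aleph_0$-categorical by Rosenstein's classification cited in Section~\ref{sec:first}, and Grzegorczyk's theorem gives that $\aleph_0$-categoricity is preserved by finite direct products.

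For the forward direction, suppose $S=\bigoplus_{i\in\N}M_i$ is $\aleph_0$-categorical. The plan is to establish three claims: (a) $M_i$ is a group for all but finitely many $i$; (b) of the resulting groups, all but finitely many are abelian; (c) the direct sum of the surviving abelian factors has bounded order. Given these, we may take $I_0$ to be the finite exceptional set of indices, $F=\prod_{i\in I_0}M_i$ and $A=\bigoplus_{i\notin I_0}M_i$; then $F$ is finite, $A$ is abelian of bounded order, and $S\cong F\times A$. Parts (b) and (c) come quickly: the group of units $G(S)=\bigoplus_{i}G(M_i)$ is precisely the $\eh$-class $H_{1_S}$, hence an $\aleph_0$-categorical maximal subgroup by Corollary~\ref{cor:green}, so Lemma~\ref{lem:dpgps} yields (b); and Corollary~\ref{cor:indexperiod} forces $G(S)$ to have bounded exponent, giving (c) once (a) is known.

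The substantive step is (a), which I expect to be the main obstacle since Lemma~\ref{lem:dpgps} contributes nothing in the non-group case. My approach is a chain-length invariant on $E(S)$. Suppose for contradiction that there is an infinite set $I\subseteq\N$ for which each $M_i$ ($i\in I$) is not a group. Finiteness of $M_i$ then produces an idempotent $e_i\in E(M_i)\setminus\{1_i\}$, and I write $\bar e_i\in S$ for the element with $e_i$ in coordinate $i$ and $1_j$ in coordinate $j\neq i$. For $f\in E(S)$ let $\ell(f)$ denote the length of a longest strict chain from $f$ up to $1_S$ in the natural partial order $e\leq f\iff ef=fe=e$. Because the support of $f$ is finite and each $M_j$ is finite, the interval $[f,1_S]\subseteq E(S)$ is finite, so $\ell(f)\in\N^0$. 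Every automorphism of $S$ fixes $1_S$, preserves $E(S)$, and preserves $\leq$, so $\ell$ is constant on each $\sim_{S,1}$-class. For distinct $i_1,\dots,i_n\in I$ the product $f_n=\bar e_{i_1}\cdots\bar e_{i_n}$ is the idempotent with $e_{i_k}$ in coordinate $i_k$ and $1$ elsewhere; replacing each $e_{i_k}$ by $1_{i_k}$ one coordinate at a time gives a strict chain of length $n$ from $f_n$ to $1_S$, hence $\ell(f_n)\geq n$. Thus $\ell$ takes infinitely many values on $E(S)$, producing infinitely many $\sim_{S,1}$-classes and contradicting the RNT. This establishes (a), and assembling (a)--(c) yields the desired decomposition.
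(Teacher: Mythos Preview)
Your proof is correct and follows essentially the same route as the paper: both first show that all but finitely many $M_i$ are groups by exhibiting idempotents $f_n$ whose interval $[f_n,1_S]$ in $E(S)$ carries an unbounded automorphism-invariant (the paper counts the idempotents in this interval after choosing each $e_j$ maximal in $E(M_{i_j})\setminus\{1_{i_j}\}$, whereas you use chain length and need no maximality), and then pass to the group of units to invoke Lemma~\ref{lem:dpgps} together with bounded exponent. The only difference is this cosmetic choice of invariant.
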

\begin{proof} We first show that all but  finitely many of the monoids $M_i$ are groups. 
Suppose we have an infinite sequence
\[M_{i_1}, M_{i_2}, \hdots\]
such that each $M_{i_j}$ is not  a group. By our hypothesis, for each $j\in\N$ we may choose a non-identity idempotent
$e_j\in M_{i_j}$ such that $e_j$ is maximal in $E(M_{i_j})\setminus\{ 1_{i_j}\}$. 

Consider the sequence
\[\underline{s_1}, \underline{s_2},\underline{s_3},\hdots\]
where
\[(\underline{s_j})_{i_k}=e_k\mbox{ for }1\leq k\leq j, (\underline{s_j})_i=1_i\mbox{ else}.\]
For each $j\in\N$ there are exactly $2^j$ idempotents of $S$ above the  element $\underline{s_j}$ in the natural partial order, so that the elements 
$\underline{s_j}$ lie in distinct $\sim_{S,1}$-classes,  a contradiction. 
Thus  $S=M\times G$ where $M$ is a  finite monoid  and
$G$ is a direct sum of finite groups. 

The group of units of $S$ is $\aleph_0$-categorical, and is a direct sum of the group of units $H$ of $M$ and $G$. 
By Lemma~\ref{lem:dpgps} all but finitely many of the constituents of the direct sum forming $G$  are abelian, so that
$G=K\times W$ where $K$ is finite and $W$ is an abelian group of bounded order, hence
$\aleph_0$-categorical by \cite[Theorem 2]{Rosenstein73}. Thus $S=M\times K\times W$ where
$M\times K$ is finite and $W$ is an abelian group of bounded order.

The converse is clear as $\aleph_0$-categoricity is preserved by finite direct product.
\end{proof} 

To translate to the semigroup case requires some care, as here the direct sum does not embed into the direct product.  Let  $M_i$ be a semigroup for each $i\in I$. By the
{\em direct sum} of the semigroups $M_i$ ($i\in I$) we mean the semigroup
\[ S=\langle \bigcup_{i\in I} \overline{M_i} : \overline{m_i}\, \overline{m_i'}=\overline{m_im'_i},\, \overline{m_i}\, \overline{m_j}=\overline{m_j}\, \overline{m_i}\, \forall i\neq j, m_i,m_i'\in M_i,m_j\in M_j\rangle, \]
where $\overline{M_i}=\{ \overline{m_i}:m_i\in M_i\}$ for all $i\in I$.

\begin{proposition}\label{thm:dssemigroups} Let $S$ be the
{direct sum} of the finite semigroups $M_i$ ($i\in I$). Then $S$ is $\aleph_0$-categorical 
if and only if $I$ (and hence $S$) is finite. 
\end{proposition}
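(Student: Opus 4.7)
Plan. The ($\Leftarrow$) direction is immediate: if $I$ and each $M_i$ are finite, then using the defining relations to put every element of $S$ in a unique normal form $\prod_{i\in F}\overline{s_i}$ (with $F\subseteq I$ finite nonempty and $s_i\in M_i$) gives $|S|\le \prod_{i\in I}(|M_i|+1)<\infty$, so $S$ is finite and hence $\aleph_0$-categorical by Corollary~\ref{cor:finite}.

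For the converse, assume $I$ is infinite; I will exhibit infinitely many 1-automorphism types in $S$, contradicting the RNT. The first substantive step is to establish the \emph{normal form}: the relations $\overline{m_i}\,\overline{m_i'}=\overline{m_im_i'}$ and $\overline{m_i}\,\overline{m_j}=\overline{m_j}\,\overline{m_i}$ (for $i\ne j$) suffice to write every element of $S$ uniquely as $\prod_{i\in F}\overline{s_i}$ for some finite nonempty \emph{support} $F\subseteq I$ and $s_i\in M_i$. Uniqueness is established by constructing the concrete ``partial tuple'' semigroup $\widetilde S$ of finite-support functions $I\to\bigsqcup_i M_i$ (multiplied componentwise, filled in from whichever side is defined on a given coordinate) and verifying that the obvious map $S\to\widetilde S$ sending $\overline{m_i}$ to the tuple supported at $i$ with value $m_i$ is an isomorphism.

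Since each $M_i$ is a finite semigroup it contains an idempotent $e_i$. Pick a sequence $i_1,i_2,\ldots$ of distinct indices in $I$ and set
\[ s_n := \overline{e_{i_1}}\,\overline{e_{i_2}}\cdots\overline{e_{i_n}}, \]
an idempotent of $S$ with support $\{i_1,\ldots,i_n\}$. Consider the algebraic invariant
\[ L(s) := |\{e\in E(S) : es=s\}|, \]
which is preserved by any $\phi\in \Aut(S)$ (since $\phi$ restricts to a bijection of $E(S)$ and $es=s$ iff $\phi(e)\phi(s)=\phi(s)$). I claim $L(s_n)\ge 2^n-1$: for each nonempty $J\subseteq\{1,\ldots,n\}$ the product $e_J:=\prod_{k\in J}\overline{e_{i_k}}$ is an idempotent of $S$ (product of commuting idempotents), and a short normal-form calculation gives $e_J\cdot s_n=s_n$ (on each overlapping coordinate $i_k$ with $k\in J$ one uses $e_{i_k}^2=e_{i_k}$). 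Distinct $J$'s produce $e_J$'s of pairwise distinct supports and hence pairwise distinct, yielding $2^n-1$ idempotents left-fixing $s_n$. Thus $L(s_n)\to\infty$, so the $s_n$ lie in infinitely many distinct 1-automorphism types of $S$, contradicting $\aleph_0$-categoricity by the RNT.

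The only genuinely technical step is establishing the normal form (so that ``support'' is well defined and the $e_J$'s above are pairwise distinct); once that is in place, the remainder is a short combinatorial count. The conceptual point is that, unlike in the monoid setting of Theorem~\ref{mon:ds}, the semigroup direct sum has no global identity to ``absorb'' trivial coordinates, so the support of each element is a genuine algebraic feature detectable by invariants like $L$, which grows without bound as soon as $I$ is infinite.
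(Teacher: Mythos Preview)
Your proof is correct and follows essentially the same route as the paper: both choose idempotents $e_i\in M_i$, form products $s_n$ of increasing support, and observe that an automorphism-invariant count of related idempotents is at least $2^n-1$, giving infinitely many $1$-automorphism types. The only differences are cosmetic --- the paper chooses \emph{maximal} idempotents and counts idempotents above $s_n$ in the natural partial order (obtaining \emph{exactly} $2^n-1$), whereas you use arbitrary idempotents and the one-sided condition $es_n=s_n$; you are also more explicit about the normal-form model $\widetilde S$, which the paper takes for granted.
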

\begin{proof}  Suppose that $S$ is $\aleph_0$-categorical. For each
$M_i$ we choose a maximal idempotent $e_i$. If $I$ is infinite, then
without loss of generality we may take $I=\N$. Let
$\overline{s_i}=\overline{e_1}\overline{e_2}\hdots \overline{e_n}$. Notice that for each $\overline{s_i}$ there are precisely $2^i-1$ 
idempotents greater than $\overline{s_i}$, so that each $\overline{s_i}$ lies in a distinct $\sim_{S,1}$-class.
Thus $I$ is finite. The converse is immediate by Corollary \ref{cor:finite}.
\end{proof}

Given the disappointing nature of Proposition~\ref{thm:dssemigroups} we focus attention on a different construction, which yields useful results. 
The basic definitions and results are taken from \cite{Ciric95}. 

A semigroup with zero $S$ is a \textit{0-direct union} or \textit{orthogonal sum} of the subsemigroups $S_i$ ($i\in A$) with zero, if the following hold: 
 \begin{enumerate}[label=(\arabic*)]
\item  $S_i\neq \{0\}$ for each $i\in A$; 
 \item  $S=\bigcup_{i\in A} S_i$; 
 \item $S_i\cap S_j=S_iS_j=\{0\}$ for each $i\neq j$.
  \end{enumerate}
 We denote $S$ as $\bigsqcup_{i\in A}^0 S_i$. The family  $\mathcal{S}=\{S_i:i\in A\}$ is called a \textit{0-direct decomposition} of $S$, and the $S_i$ are called the \textit{summands} of $S$. Note that each summand of $S$ forms an ideal of $S$.  If $\mathcal{S}$ and $\mathcal{S}'$ are a pair of 0-direct decompositions of $S$, then we say that $\mathcal{S}$ is \textit{greater than} $\mathcal{S}'$ if each member of $\mathcal{S}$ is a subsemigroup of some member of $\mathcal{S}'$. We say that $S$ is \textit{0-directly indecomposable} if $\{S\}$ is the unique 0-direct decomposition of $S$.

\begin{example} Let $B^0[G;I]$ be a Brandt semigroup, and consider the group with zero $B^0_i=\{(i,g,i):g\in G\}\cup\{0\}$ for each $i\in I$. Then $B_iB_j=\{0\}$ if $i\neq j$, and so $\bigcup_{i\in I}B_i^0$  forms a 0-direct union of the subsemigroups $B_i^0$.
\end{example} 

A subset $T$ of a semigroup $S$ is \textit{consistent} if, for $x,y\in S$, $xy\in T$ implies that $x,y\in T$. A subset $T$ of a semigroup with zero is \textit{0-consistent} if $T\setminus \{0\}$ is consistent. The integral connection between 0-consistency and 0-direct decompositions is that a semigroup with zero $S$ is 0-directly indecomposable if and only if $S$ has no proper 0-consistent ideals \cite[Lemma 4]{Ciric95}. Consequently, every completely 0-simple semigroup is 0-directly indecomposable. 

 The central result of  \cite{Ciric95} was proving that that every semigroup with zero has a greatest 0-direct decomposition, and that the summands of such a decomposition are precisely the 0-directly indecomposable ideals. The importance of the existence of a greatest 0-direct decomposition for $\aleph_0$-categoricity is highlighted in the following proposition.  

 \begin{proposition}\label{iso 0-direct} Let $S$ be a semigroup with zero and let $\mathcal{S}= \{S_i:i\in A\}$ be the greatest 0-direct decomposition of $S$. Let $\pi:A\rightarrow A$ be a bijection and $\phi_i:S_i\rightarrow S_{i\pi}$ an isomorphism for each $i \in A$. Then the map $\phi:S \rightarrow S$ given by 
 \[ s_i\phi = s_i\phi_i \quad  (s_i\in S_i)
 \] 
 is an automorphism of $S$, denoted $\phi=\bigsqcup^0_{i\in A} \phi_i$. Moreover, every automorphism of $S$ can be constructed in this way. 
 \end{proposition}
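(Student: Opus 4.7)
The plan is to split the proof into two parts corresponding to the two assertions. For the first part, given the data $\pi$ and $\{\phi_i\}_{i\in A}$, I want to check that the piecewise-defined map $\phi$ is a well-defined bijection and a homomorphism. Well-definedness and bijectivity are routine: since the $S_i$ partition $S$ apart from sharing the zero, and each $\phi_i$ sends zero to zero and is a bijection onto $S_{i\pi}$, while $\pi$ permutes the index set, the disjointification of the $\phi_i$ is a bijection of $S$ onto itself. The key verification is the homomorphism property. For $s_i\in S_i$ and $s_j\in S_j$ with $i\neq j$ one has $s_is_j=0$ by condition (3) in the definition of the 0-direct union, and $(s_i\phi)(s_j\phi)=(s_i\phi_i)(s_j\phi_j)\in S_{i\pi}S_{j\pi}=\{0\}$ since $i\pi\neq j\pi$; both sides are therefore $0$. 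When $i=j$, the homomorphism property reduces to that of $\phi_i$ itself. So $\phi$ is an automorphism.

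For the converse, I would argue that any automorphism $\phi$ of $S$ must permute the collection $\{S_i:i\in A\}$. The reason is that, by the theorem of \cite{Ciric95} quoted just above the proposition, the summands of the greatest 0-direct decomposition are \emph{exactly} the 0-directly indecomposable ideals of $S$. Both properties (being an ideal of $S$, and being 0-directly indecomposable) are preserved by any automorphism of $S$: images of ideals under automorphisms are ideals, and 0-direct indecomposability is a purely algebraic invariant. Hence $\phi(S_i)$ is a 0-directly indecomposable ideal of $S$, and therefore equals some unique $S_{i'}$. This defines a map $\pi:A\to A$ by $i\pi=i'$; applying the same reasoning to $\phi^{-1}$ shows that $\pi$ is a bijection.

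Setting $\phi_i:=\phi|_{S_i}:S_i\to S_{i\pi}$ gives an isomorphism for each $i$, and since $S=\bigcup_{i\in A}S_i$ and the $S_i$ only overlap in $0$, $\phi$ is completely determined by the family $(\phi_i)_{i\in A}$ and coincides with $\bigsqcup^0_{i\in A}\phi_i$ in the notation of the proposition.

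The only genuinely non-trivial step is the one I expect to be the main obstacle: justifying that $\phi$ permutes the summands of the \emph{greatest} 0-direct decomposition. This is where one must invoke the intrinsic characterisation of the $S_i$ as the 0-directly indecomposable ideals of $S$, rather than, say, trying to argue directly from connectedness properties of the decomposition. Once this characterisation is in hand, the rest is bookkeeping.
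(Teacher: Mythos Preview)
Your proof is correct. The first half (that $\phi=\bigsqcup^0_{i\in A}\phi_i$ is an automorphism) is identical to the paper's argument.

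For the converse you take a slightly different route from the paper. You invoke the intrinsic characterisation from \cite{Ciric95}---that the summands of the greatest 0-direct decomposition are exactly the 0-directly indecomposable ideals---and observe that automorphisms preserve both ``ideal'' and ``0-directly indecomposable'', so $\phi$ must permute the $S_i$. The paper instead argues more directly from maximality: it notes that $\{S_i\phi:i\in A\}$ is again a 0-direct decomposition, uses that $\mathcal{S}$ is greatest to get $S_i\subseteq S_k\phi$ for a unique $k$, and then uses indecomposability of $S_k$ (applied to the 0-direct decomposition $\{S_i\phi^{-1}:i\in A\}$) to force this containment to be a bijection. Your approach is cleaner and one line shorter, at the cost of relying on the full strength of the characterisation theorem; the paper's version is more self-contained, using only the ``greatest'' property plus the fact that the summands are indecomposable. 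Both are perfectly valid.
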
 
 
 \begin{proof}
 Let $\phi$ be constructed as in the hypothesis of the proposition. Since $0\phi_i=0$ for each $i\in A$ the map is well-defined, and it is clearly bijective. Let $a\in S_i$ and $b\in S_j$. If $i=j$ then 
 \[ (ab)\phi = (ab)\phi_i=(a\phi_i)(b\phi_i)=(a\phi)(b\phi),
 \] 
 and if $i\neq j$ then 
 \[ (ab)\phi = 0\phi = 0 = (a\phi_i)(b\phi_j) =(a\phi)(b\phi).
 \] 
 Hence $\phi$ is an isomorphism. 
 
 Conversely, if $\phi'$ is an automorphism of $S$, then 
 \[ \mathcal{S}\phi'=\{S_i\phi':i\in A\} 
 \] 
 is clearly a 0-direct decomposition of $S$. For each summand $S_i$ there exists $k\in A$ such that $S_i\subseteq S_k\phi'$ since $\mathcal{S}$ is the greatest 0-direct decomposition. If $S_i\subseteq S_k\phi'\cap S_{k'}\phi'$ then $S_i=\{0\}$ as $\mathcal{S}\phi'$ is a 0-direct decomposition of $S$, a contradiction. Hence the element $k$ is unique. On the other hand, if  $S_i,S_j\subseteq S_k\phi'$, then $S_i\phi'^{-1},S_j\phi'^{-1}\subseteq S_k$, and so as $\{S_i\phi'^{-1}:i\in A\}$ is also a 0-direct decomposition of $S$, we have that $i=j$ since $S_k$ is 0-direct indecomposable. Hence there exists a bijection $\pi'$ of $A$ such that $S_{i}\phi'=S_{i\pi'}$ for each $i\in A$ as required.  
 \end{proof}

 \begin{proposition} \label{cat 0-direct} Let $S$ be a semigroup with zero and let $\mathcal{S}= \{S_i:i\in A\}$ be the greatest 0-direct decomposition of $S$. Then $S$ is $\aleph_0$-categorical if and only if each $S_i$ is $\aleph_0$-categorical and $\mathcal{S}$ is finite, up to isomorphism. 
 \end{proposition}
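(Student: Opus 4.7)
For the forward direction, fix a non-zero element $a_i\in S_i$ for each $i\in A$. I claim that $\{(S_i,a_i):i\in A\}$ forms a system of $1$-pivoted p.r.c.\ subsemigroups of $S$. Indeed, if $\phi\in\Aut S$ satisfies $a_i\phi=a_j$, then by Proposition~\ref{iso 0-direct} $\phi=\bigsqcup^0_{k\in A}\phi_k$ for some bijection $\pi$ of $A$, so that $a_i\phi=a_i\phi_i\in S_{i\pi}$; since $a_j$ is a non-zero element of $S_j\cap S_{i\pi}$, we must have $i\pi=j$, and hence $S_i\phi=S_j$. Proposition~\ref{pairwise-rel-char} then delivers both the $\aleph_0$-categoricity of each $S_i$ and the finiteness of $\mathcal{S}$ up to isomorphism.

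For the converse, Proposition~\ref{excluded} reduces the task to counting $\sim_{S,n}$-classes of $n$-tuples from $S\setminus\{0\}$. Let $T_1,\ldots,T_r$ be representatives of the (finitely many) isomorphism classes of summands and, for each $k$, fix once and for all an isomorphism $\iota_i:S_i\to T_k$ for every $i\in A_k:=\{i\in A:S_i\cong T_k\}$. To $\underline{a}=(a_1,\ldots,a_n)\in(S\setminus\{0\})^n$ associate:
\begin{enumerate}[label=(\roman*)]
\item the ordered partition $C_1,\ldots,C_m$ of $\{1,\ldots,n\}$ whose blocks group indices whose entries lie in a common summand (listed in order of first appearance);
\item for each block $C_j$, the iso-type label $\kappa(j)\in\{1,\ldots,r\}$ of the summand $S_{c_j(\underline{a})}$ containing the corresponding entries;
\item for each $j$, the $\sim_{T_{\kappa(j)},|C_j|}$-class of the image of $\underline{a}|_{C_j}$ under $\iota_{c_j(\underline{a})}$.
\end{enumerate}
The data (i), (ii), (iii) admit $B_n$, $r^n$, and a finite number of choices respectively, the last by the $\aleph_0$-categoricity of each $T_k$. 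By Corollary~\ref{cor:themethod} it is enough to show that two tuples with matching data are automorphically equivalent.

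So suppose $\underline{a},\underline{b}\in(S\setminus\{0\})^n$ share the same data. For each $j$, unpacking (iii) yields an automorphism $\gamma_j\in\Aut T_{\kappa(j)}$ carrying the $\iota$-image of $\underline{a}|_{C_j}$ to that of $\underline{b}|_{C_j}$; setting $\phi_{c_j(\underline{a})}:=\iota_{c_j(\underline{b})}^{-1}\circ\gamma_j\circ\iota_{c_j(\underline{a})}$ produces an isomorphism $S_{c_j(\underline{a})}\to S_{c_j(\underline{b})}$ sending $\underline{a}|_{C_j}$ onto $\underline{b}|_{C_j}$. The finite partial injection $c_j(\underline{a})\mapsto c_j(\underline{b})$ on $A$ is iso-type preserving by (ii), and so restricts for each $k$ to a bijection between two finite subsets of $A_k$ of equal size; since each $A_k$ is at most countable, it extends to a type-preserving bijection $\pi:A\to A$. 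On the remaining indices, fill in arbitrary isomorphisms $\phi_i:S_i\to S_{i\pi}$ (which exist since $S_i\cong S_{i\pi}$ by construction). By Proposition~\ref{iso 0-direct}, $\phi:=\bigsqcup^0_{i\in A}\phi_i\in\Aut S$ and $\phi(\underline{a})=\underline{b}$, as required.

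The main bookkeeping hurdle is the extension of $\pi$ from the indices appearing in $\underline{a},\underline{b}$ to a bijection of all of $A$ that preserves each iso-type class $A_k$. Once one notes that condition (ii) forces the partial matching to respect the partition $A=\bigsqcup_k A_k$, and that each $A_k$ is either finite (where the matched domain and range automatically have equal complements) or countably infinite (where any finite partial injection extends), the extension is routine, and everything else is a combination of Propositions~\ref{iso 0-direct} and~\ref{pairwise-rel-char} with Corollary~\ref{cor:themethod}.
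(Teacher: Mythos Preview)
Your proof is correct and follows essentially the same approach as the paper's: both directions use Proposition~\ref{iso 0-direct} in the same way, with Proposition~\ref{pairwise-rel-char} for the forward direction and a tuple-invariant argument reduced to $S\setminus\{0\}$ for the converse. The only organisational difference is that in your condition (iii) you record the automorphism type of each summand-block separately, whereas the paper concatenates \emph{all} entries lying in summands of a given isomorphism type into a single tuple in the representative and records its automorphism type there; both bookkeeping schemes work, and yours is arguably slightly cleaner since each block then receives its own isomorphism independently. Your explicit discussion of extending the partial type-preserving injection on $A$ to a global bijection is a point the paper glosses over.
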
  
  
 \begin{proof}  It follows immediately from Proposition \ref{iso 0-direct} that $\{(S_i,x_i):i\in A\}$ forms a system of 1-pivoted p.r.c subsemigroups of $S$ for any $x_i\in S_i^*$. Hence if $S$ is $\aleph_0$-categorical then each $S_i$ is $\aleph_0$-categorical and $\mathcal{S}$ is finite, up to isomorphism, by Proposition  \ref{pairwise-rel-char}. 
 
 Conversely, let each summand be $\aleph_0$-categorical and suppose there exists exactly $r\in \mathbb{N}$ summands, up to isomorphism. Let $S_{\rho_1},\dots, S_{\rho_r}$ be representatives of the isomorphism types of the summands of $S$ and, for each $\mu\in A$, let $\phi_{\mu}$ be an isomorphism from $S_\mu$ to its unique isomorphic representative
 in  $S_{\rho_1},\dots, S_{\rho_r}$.  By Corollary \ref{excluded} it suffices to  show that the number of $n$-automorphism types of $S^*=S\setminus \{0\}$ is finite  for each $n\geq 1$.  Let $(a_1,\dots,a_n)$ and $(b_1,\dots,b_n)$ be a pair of $n$-tuples of $S^*$ with $a_k\in S_{i_k}$ and $b_k\in S_{j_k}$ for each $1\leq k \leq n$, say.  Impose the condition that $a_i,a_j$ belong to the same summand if and only if $b_i,b_j$ belong to the same summand,  for each $1\leq i, j\leq n$. This is clearly equivalent to the map $\pi:\{i_1,\dots,i_n\} \rightarrow \{j_1,\dots,j_n\}$ given by $i_k\pi=j_k$ being a bijection, and thus the number of choices for this condition is equal to $B_n$. Suppose also that $S_{i_k}\cong S_{j_k}$ for each $k$, noting that this condition has $r^n$ choices. For each $1\leq s \leq r$, let  $a_{s1},\dots,a_{sn_s}$ be precisely the entries of $\underline{a}$ which are elements of summands isomorphic to $S_{\rho_s}$, noting that the same is true of $b_{s1},\dots,b_{sn_s}$ by our second condition. Note also that $\{1,\dots,n\}=\{11,\dots,1n_1,21,\dots,2n_2,\dots,rn_r\}$. 
We impose a final condition on our pair of $n$-tuples which forces, for each $1\leq s \leq r$, 
\[ (a_{s1}\phi_{i_{s1}},\dots,a_{sn_s}\phi_{i_{sn_s}}) \, \sim_{S_{\rho_s},n_s} \, (b_{s1}\phi_{j_{s1}},\dots,b_{sn_s}\phi_{j_{sn_s}})
\] 
via $\psi_s\in \text{Aut}(S_{\rho_s})$, say (where if $n_s=0$ then we  take any automorphism of $S_{\rho_s}$). By the $\aleph_0$-categoricity of each $S_{\rho_s}$ this condition also has finitely many choices. For each $1 \leq s \leq r$ and $1\leq t \leq n_s$ we have that $\phi_{i_{st}}\psi_s \phi_{j_{st}}^{-1}$ is an isomorphism from $S_{i_{st}}$ to $S_{j_{st}}$, and is such that 
\[ a_{st}\phi_{i_{st}}\psi_s \phi_{j_{st}}^{-1} = b_{st}.
\]
Let $\bar{\pi}$ be a bijection of $A$ which extends $\pi$ and which preserves the isomorphism types of the summands, so that $S_i\cong S_{i\bar{\pi}}$. Such a bijection exists since each $S_{i_k}$ is isomorphic to $S_{j_k}$. For each $i\in A\setminus \{i_1,\dots, i_n\}$, let $\Psi_i$ be an isomorphism from $S_i$ to $S_{i\bar{\pi}}$, and we let $\Psi_{i_{st}}=\phi_{i_{st}}\psi_s \phi_{j_{st}}^{-1}$ for each $1 \leq s \leq r$ and $1\leq t \leq n_s$. Then $\Psi=\bigsqcup^0_{i\in A} \Psi_i$ is an automorphism of $S$ by Proposition \ref{iso 0-direct}, and is such that $\underline{a}\Psi=\underline{b}$ since $\Psi$ extends each $\phi_{i_{st}}\psi_s \phi_{j_{st}}^{-1}$. Since each of our conditions has only finitely many choices, $(S^*)^n$ has only finitely many $n$-automorphism tuples (over $S$) by Lemma \ref{counting bits}, and thus $S$ is $\aleph_0$-categorical.
 \end{proof}
 
When studying $\aleph_0$-categorical semigroups with zero, it therefore suffices to examine 0-directly indecomposable semigroups. 

We observe that without the condition of $\mathcal{S}$ being the greatest 0-direct decomposition of $S$, the converse of Proposition \ref{cat 0-direct} need not be true. For example, for each $n\geq 1$, let $N_n$ be a null semigroup on $n$ non-zero elements. Then $N=\bigsqcup^0_{i\in\mathbb{N}} N_i$ is a countably infinite null semigroup, and is thus $\aleph_0$-categorical by Example \ref{null cat}. However the set of summands of $N$ is not finite, up to isomorphism. 
 
 A semigroup $S$ with zero is called \textit{primitive} if each of its non-zero idempotents is primitive. It  follows from the work of Hall in \cite{Hall70} that a regular semigroup $S$ is primitive if and only if $S$ is isomorphic to a 0-direct union of completely 0-simple semigroups.
 Since each completely 0-simple semigroup is 0-directly indecomposable, we obtain the following immediate consequence to Proposition \ref{cat 0-direct}.  
 
 \begin{corollary}\label{0-direct rees} Let $S_i$ ($i\in A$)  be a collection of completely 0-simple semigroups. Then $\bigsqcup_{i\in A}^0 S_i$ is $\aleph_0$-categorical if and only if each $S_i$ is $\aleph_0$-categorical and $\{S_i:i\in A\}$ is finite, up to isomorphism. 
 \end{corollary}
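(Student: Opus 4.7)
The plan is to simply invoke Proposition~\ref{cat 0-direct}, after observing that the given decomposition $\{S_i : i \in A\}$ is in fact the greatest 0-direct decomposition of $S = \bigsqcup_{i \in A}^0 S_i$. Recall that a 0-direct decomposition $\mathcal{T}$ is greater than $\mathcal{T}'$ when every member of $\mathcal{T}$ sits inside a member of $\mathcal{T}'$, so the greatest decomposition is the finest one; this finest decomposition is realised precisely when each summand is itself 0-directly indecomposable.

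First I would note the key input, already recorded immediately before the corollary: by Hall's theorem each completely 0-simple semigroup is 0-directly indecomposable (equivalently, it has no proper 0-consistent ideals, by \cite[Lemma 4]{Ciric95}). Consequently, the collection $\{S_i : i \in A\}$ cannot be properly refined as a 0-direct decomposition of $S$, so it must coincide with the greatest 0-direct decomposition of $S$ guaranteed by the result of \cite{Ciric95}.

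With this identification in hand, the corollary is immediate from Proposition~\ref{cat 0-direct}: $S$ is $\aleph_0$-categorical if and only if each summand $S_i$ is $\aleph_0$-categorical and the family $\{S_i : i \in A\}$ is finite up to isomorphism. There is no genuine obstacle here; the only point requiring care is ensuring the hypothesis of Proposition~\ref{cat 0-direct}, namely that one is working with the \emph{greatest} decomposition rather than an arbitrary one, and this is exactly what 0-direct indecomposability of completely 0-simple semigroups delivers.
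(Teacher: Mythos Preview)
Your argument is correct and matches the paper's own reasoning exactly: the paper notes just before the corollary that completely 0-simple semigroups are 0-directly indecomposable, and then states the result as an immediate consequence of Proposition~\ref{cat 0-direct}. There is nothing to add.
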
 
 
 A classification of primitive regular $\aleph_0$-categorical semigroups via its completely 0-simple semigroup ideals then follows. In particular, by Theorem \ref{Brandt iso} and Proposition \ref{brandt cat} we  have the following classification of primitive inverse semigroups: 
 
\begin{corollary} A primitive inverse semigroup $S$ is $\aleph_0$-categorical if and only if $S$ is isomorphic to  $\bigsqcup_{i\in A}^0 \mathcal{B}^0[G_i;I_i]$ such that each $G_i$ is $\aleph_0$-categorical,  $\{G_i:i\in A\}$ is  finite up to isomorphism, and $\{|I_i|:i\in A\}$ is finite.    
\end{corollary}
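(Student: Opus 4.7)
The plan is to chain together the structural decomposition of primitive inverse semigroups with the two preceding results on 0-direct unions and Brandt semigroups. First I would recall Hall's theorem (cited above via \cite{Hall70}): a regular semigroup is primitive if and only if it is a 0-direct union of completely 0-simple semigroups. Combined with the fact that an inverse completely 0-simple semigroup is a Brandt semigroup \cite[Theorem 5.1.8]{Howie94}, and that the 0-direct summands of an inverse semigroup with zero are themselves inverse (each being an ideal), this gives that any primitive inverse semigroup $S$ is isomorphic to $\bigsqcup_{i\in A}^0 \mathcal{B}^0[G_i;I_i]$ for some family of groups $G_i$ and index sets $I_i$.

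Next I would apply Corollary~\ref{0-direct rees} (the specialisation of Proposition~\ref{cat 0-direct} to completely 0-simple summands) to deduce that $S$ is $\aleph_0$-categorical if and only if each Brandt summand $\mathcal{B}^0[G_i;I_i]$ is $\aleph_0$-categorical and the family $\{\mathcal{B}^0[G_i;I_i]:i\in A\}$ is finite up to isomorphism. Proposition~\ref{brandt cat} translates the first condition directly into: each group $G_i$ is $\aleph_0$-categorical.

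The only non-trivial step is reinterpreting the finiteness-up-to-isomorphism clause in terms of the ingredients $G_i$ and $I_i$ separately. For this I would invoke the standard fact (readable off Theorem~\ref{Brandt iso}, whose proof shows that the only invariants of a Brandt semigroup are the isomorphism type of the structure group and the cardinality of the index set) that $\mathcal{B}^0[G;I]\cong \mathcal{B}^0[H;J]$ if and only if $G\cong H$ and $|I|=|J|$. Hence the family of Brandt summands is finite up to isomorphism precisely when the set of pairs $(\,[G_i],\,|I_i|\,)$ is finite, which is equivalent to saying that $\{G_i:i\in A\}$ is finite up to isomorphism and $\{|I_i|:i\in A\}$ is finite. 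Assembling these three equivalences yields the corollary; essentially no new ideas beyond Corollary~\ref{0-direct rees}, Proposition~\ref{brandt cat}, and the Brandt isomorphism criterion are required, and I anticipate no real obstacle.
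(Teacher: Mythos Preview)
Your proposal is correct and matches the paper's approach exactly: the paper presents this corollary as an immediate consequence of Corollary~\ref{0-direct rees} together with Theorem~\ref{Brandt iso} and Proposition~\ref{brandt cat}, and you have filled in precisely those steps, including the translation of the finiteness-up-to-isomorphism condition via the Brandt isomorphism criterion.
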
 

\section{Building $\aleph_0$-categorical semigroups: Semidirect products and McAlister's P-Theorem}\label{sec:last}

Given that $\aleph_0$-categoricity has been shown by Grzegorczyk to be inherited by finite direct product \cite{Grzeg}, the next natural question is to assess semidirect products. In this section we do so in the case of a semigroup acting on a finite semigroup.

 Our work requires the following variant of $\aleph_0$-categoricity, and the subsequent pair of lemmas: 

\begin{definition}\label{def:aug} Given a semigroup $S$ and a collection $\mathcal{A}=\{S_i:i\in A\}$ of subsets of $S$, we let Aut$(S;\mathcal{A})$ denote the group of automorphisms of $S$ which fix each $S_i$ ($i\in A$) setwise. We call $S$ \textit{$\aleph_0$-categorical over $\mathcal{A}$} if Aut$(S;\mathcal{A})$ has finitely many orbits on its action on $S^n$ for each $n\geq 1$. We let $\sim_{S,\mathcal{A},n}$ denote the resulting equivalence relation on $S^n$. 
\end{definition} 

With notation as above, Definition~\ref{def:aug} is equivalent to the structure
consisting of the semigroup $S$ together with a collection of unary relations corresponding to the subsets $S_i$ ($i\in I$), being $\aleph_0$-categorical. Moreover, if $\underline{X}=(x_1,\dots,x_r)$ is a tuple of elements $S$, then the condition that  $S$ is $\aleph_0$-categorical over $\underline{X}$ is equivalent to $S$ being $\aleph_0$-categorical over $\{\{x_1\},\dots,\{x_r\}\}$. 

\begin{lemma}\label{cat over set} Let $S$ be a semigroup with a system of $t$-pivoted p.r.c. subsets $\{(S_i,\underline{X}_i):i\in I\}$. Then $S$ is $\aleph_0$-categorical over $\mathcal{A}=\{S_i:i\in I\}$ if and only if $S$ is $\aleph_0$-categorical and $\mathcal{A}$ is finite. 
\end{lemma}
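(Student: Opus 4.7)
The plan is to exploit the p.r.c. structure to translate between pointwise fixing a finite tuple and setwise fixing a finite collection of subsets, and then to invoke Lemma \ref{cat over finite} and the RNT. Both directions then follow almost mechanically; there is no deep obstacle, only the need to produce the right pivot in the backward direction.

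For the forward direction, I would first note that $\text{Aut}(S;\mathcal{A})$ is a subgroup of $\text{Aut}(S)$, so its orbits on $S^n$ refine those of $\text{Aut}(S)$. Hence finiteness of $|S^n/\!\sim_{S,\mathcal{A},n}|$ gives finiteness of $|S^n/\!\sim_{S,n}|$, and the RNT yields $\aleph_0$-categoricity of $S$. To bound $|\mathcal{A}|$, I would observe that by Definition~\ref{def:aug} each $S_i\in\mathcal{A}$ is stabilized setwise by every element of $\text{Aut}(S;\mathcal{A})$, so $S_i$ is a union of $\sim_{S,\mathcal{A},1}$-classes of $S$. Since only finitely many such classes exist, only finitely many such unions can occur, and $|\mathcal{A}|\leq 2^{|S/\!\sim_{S,\mathcal{A},1}|}$.

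For the converse, the key step is to build a single finite tuple $\underline{Y}$ of $S$ such that any automorphism fixing $\underline{Y}$ pointwise already lies in $\text{Aut}(S;\mathcal{A})$. Since $\mathcal{A}$ is finite, I would list its distinct members as $T_1,\dots,T_m$, choose for each $j$ an index $i_j\in I$ with $S_{i_j}=T_j$, and form the concatenation $\underline{Y}=(\underline{X}_{i_1},\dots,\underline{X}_{i_m})$. If $\phi\in\text{Aut}(S)$ fixes $\underline{Y}$, then $\underline{X}_{i_j}\phi=\underline{X}_{i_j}$ for every $j$; applying Lemma \ref{rel-char equiv} to both $\phi$ and $\phi^{-1}$ forces $T_j\phi=T_j$, and hence $\text{Aut}(S;\underline{Y})\subseteq\text{Aut}(S;\mathcal{A})$. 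Since $S$ is $\aleph_0$-categorical, Lemma \ref{cat over finite} delivers $\aleph_0$-categoricity of $S$ over $\underline{Y}$, so $\text{Aut}(S;\underline{Y})$ has finitely many orbits on $S^n$ for every $n$; these orbits refine those of the larger group $\text{Aut}(S;\mathcal{A})$, giving the conclusion. The only non-mechanical ingredient is the choice of $\underline{Y}$, and the finiteness assumption on $\mathcal{A}$ makes this choice immediate.
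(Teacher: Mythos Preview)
Your proof is correct and the converse direction is essentially identical to the paper's: both concatenate the finitely many pivots into a single tuple $\underline{Y}=(\underline{X}_{i_1},\dots,\underline{X}_{i_m})$ and use that any automorphism fixing $\underline{Y}$ must, by the p.r.c.\ condition, fix each $S_{i_j}$ setwise; the paper does this inline by bounding $|S^n/{\sim_{S,\mathcal{A},n}}|\le|S^{n+mt}/{\sim_{S,n+mt}}|$, while you package the same step via Lemma~\ref{cat over finite}.

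The forward direction differs slightly. Your bound $|\mathcal{A}|\le 2^{|S/{\sim_{S,\mathcal{A},1}}|}$ is a more elementary argument that does not use the p.r.c.\ structure at all, only that each $S_i$ is by definition a union of $\text{Aut}(S;\mathcal{A})$-orbits on $S$. The paper instead exploits the pivots directly: if $\underline{X}_i\sim_{S,\mathcal{A},t}\underline{X}_j$ via some $\phi\in\text{Aut}(S;\mathcal{A})$, then $S_i=S_i\phi=S_j$ (the first equality since $\phi$ fixes $\mathcal{A}$ setwise, the second from the p.r.c.\ condition), so $|\mathcal{A}|\le|S^t/{\sim_{S,\mathcal{A},t}}|$. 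Your route is perfectly valid and arguably cleaner for this particular lemma; the paper's route gives a tighter bound and makes the role of the pivots visible on both sides of the equivalence.
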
 

\begin{proof}  If $S$ is $\aleph_0$-categorical over $\mathcal{A}$, then trivially $S$ is $\aleph_0$-categorical.  Suppose $i,j\in I$ are such that $\underline{X}_i \, \sim_{S,\mathcal{A},r} \, \underline{X}_j$, via $\phi\in \text{Aut}(S;\mathcal{A})$, say. Then $S_i\phi=S_i$, while $S_i\phi=S_j$ since $\{(S_i,\underline{X}_i):i\in I\}$ is a system of $t$-pivoted p.r.c. subsets of $S$. Hence $S_i=S_j$, and so the cardinality of $\mathcal{A}$ is bound by the number of $r$-automorphism types over $\mathcal{A}$.  

Conversely, suppose $S$ is $\aleph_0$-categorical with $\mathcal{A}$ finite, say $\mathcal{A}=\{S_1,\dots, S_r\}$. Let $\underline{a}=(a_1,\dots,a_n)$ and $\underline{b}=(b_1,\dots,b_n)$ be a pair of $n$-tuples of $S$ such that $(\underline{a},\underline{X}_1,\dots,\underline{X}_r) \, \sim_{S,n+rt} \, (\underline{b},\underline{X}_1,\dots,\underline{X}_r)$, via $\psi\in \text{Aut}(S)$, say. Then as each pivot is fixed by $\psi$, the sets $S_i$ are setwised fixed by $\psi$, so that $\psi\in \text{Aut}(S;\mathcal{A})$. Hence as $\underline{a}\psi=\underline{b}$ we have that
\[ |S^n/\sim_{S,\mathcal{A},n}|\leq |S^{n+rt}/\sim_{S,n+rt}|<\aleph_0
\]
and so $S$ is $\aleph_0$-categorical over $\mathcal{A}$. 
\end{proof}

A simple adaptation of the proof of the lemma above also gives: 

\begin{lemma}\label{over char} Let $S$ be a semigroup,
let $t,r\in\N$, and for each $k\in \{ 1,\hdots, r\}$ let $\underline{X}_k\in S^t$. Suppose also that   $S_k$ is an $\underline{X}_k$-pivoted relatively characteristic subset of $S$ for $1\leq k \leq r$.  Then $S$ is $\aleph_0$-categorical if and only if $S$ is $\aleph_0$-categorical over $\{S_1,\dots,S_r\}$.  
\end{lemma}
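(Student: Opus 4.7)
The plan is to mirror the two-directional structure of the proof of Lemma~\ref{cat over set}, exploiting the fact that the collection $\{S_1,\dots,S_r\}$ is automatically finite (of size at most $r$), so one of the two finiteness hypotheses there comes for free, and the other piece of structure we need — that a suitable set of pivots forces setwise fixing of each $S_k$ — is exactly what the $\underline{X}_k$-pivoted r.c.\ condition buys us.

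For the forward direction I would observe that $\mathrm{Aut}(S;\{S_1,\dots,S_r\})$ is a subgroup of $\mathrm{Aut}(S)$, so every $\mathrm{Aut}(S)$-orbit on $S^n$ is a union of $\mathrm{Aut}(S;\{S_1,\dots,S_r\})$-orbits. Hence finitely many of the latter force finitely many of the former, and $\aleph_0$-categoricity of $S$ follows from $\aleph_0$-categoricity over $\{S_1,\dots,S_r\}$ via the RNT.

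For the converse, assume $S$ is $\aleph_0$-categorical. Form the concatenated tuple $\underline{Y}=(\underline{X}_1,\dots,\underline{X}_r)\in S^{rt}$. Given $n$-tuples $\underline{a},\underline{b}\in S^n$, impose the single condition
\[
(\underline{a},\underline{Y}) \,\sim_{S,n+rt}\, (\underline{b},\underline{Y}),
\]
which has only finitely many choices since $|S^{n+rt}/\!\sim_{S,n+rt}|<\aleph_0$ by the RNT. Any witnessing automorphism $\psi\in\mathrm{Aut}(S)$ fixes each pivot $\underline{X}_k$, so by Lemma~\ref{rel-char equiv} we have $S_k\psi\subseteq S_k$ for every $k$; applying the same reasoning to $\psi^{-1}$ gives $S_k\psi=S_k$. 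Therefore $\psi\in\mathrm{Aut}(S;\{S_1,\dots,S_r\})$, and since $\underline{a}\psi=\underline{b}$ we conclude $\underline{a}\,\sim_{S,\mathcal{A},n}\,\underline{b}$, yielding
\[
|S^n/\!\sim_{S,\mathcal{A},n}|\;\leq\;|S^{n+rt}/\!\sim_{S,n+rt}|\;<\;\aleph_0.
\]

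There is no real obstacle; the whole argument is a bookkeeping exercise in which the pivots $\underline{X}_1,\dots,\underline{X}_r$ are bundled once and for all into a single block $\underline{Y}$, and the r.c.\ property then translates fixing of $\underline{Y}$ into setwise preservation of each $S_k$. The only point to be careful about is that an $\underline{X}_k$-pivoted r.c.\ condition is formulated one-sidedly in Lemma~\ref{rel-char equiv} (as $S_k\psi\subseteq S_k$), so one must invoke invertibility of $\psi$ to upgrade this to $S_k\psi=S_k$ before concluding $\psi\in\mathrm{Aut}(S;\mathcal{A})$.
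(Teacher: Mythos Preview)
Your proposal is correct and is essentially the same argument the paper has in mind: the paper states this lemma immediately after Lemma~\ref{cat over set} with the remark ``A simple adaptation of the proof of the lemma above also gives,'' and your converse direction is precisely that adaptation (concatenate the pivots into one tuple and use that fixing all pivots forces setwise fixing of each $S_k$). One minor point: your care about upgrading $S_k\psi\subseteq S_k$ to equality via $\psi^{-1}$ is harmless but unnecessary, since by Definition~\ref{fprc} the $\underline{X}_k$-pivoted r.c.\ condition already asserts $\phi|_{A}$ is a bijection onto $A$ when $\underline{X}\phi=\underline{X}$; Lemma~\ref{rel-char equiv} merely records that the one-sided containment is an equivalent formulation.
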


Now suppose $S$ is  a semigroup acted on (on the left) by a monoid $T$ via  by endomorphisms. That is,
we have a map $T\times S\rightarrow S$ denoted by $(t,s)\mapsto t\cdot s$, such that for all $t,t'\in T$ and $s,s'\in S$ we have $tt'\cdot s=t\cdot (t'\cdot s)$, $1\cdot s=s$ and $t\cdot (ss')=(t\cdot s)(t\cdot s')$. We may then construct a semigroup on the set $S\times T$ with binary operation $(s,t)(s',t')=(s(t\cdot s'),tt')$. The resulting semigroup is denoted by $S\rtimes T$, and is called a \textit{semidirect product} of $S$ by $T$. 

Given a semidirect product $S\rtimes T$, we define a relation $\kappa$ on $T$ by 
\begin{equation} \label{action rel} t \, \kappa \, t' \Leftrightarrow s(t\cdot s') = s(t' \cdot s') \,\,  (\forall s,s'\in S). 
\end{equation} 
Then $\kappa$ is clearly an equivalence relation on $T$, and if $S$ is finite then $T/\kappa$ is finite.  If $S$ is a monoid and $t\cdot 1_S=1_S$ for all $t\in T$, then we say that $T$ acts
{\em monoidally}; note in this case $S\rtimes T$ is a monoid, and the definition of $\kappa$ simplifies to 
\[ t \, \kappa \, t' \Leftrightarrow t\cdot s' = t' \cdot s' \,\,  (\forall s'\in S).\]

\begin{proposition} \label{semidirect cat} 
Let $M=S\rtimes T$ be a semidirect product of $S$ and $T$, where $S$ is finite.  
If $T$ is $\aleph_0$-categorical over $T/\kappa$, then  $M$ is $\aleph_0$-categorical.

The converse holds if $S$ is a monoid with trivial group of units and $T$ acts monoidally, or if $S$ is a semilattice.
\end{proposition}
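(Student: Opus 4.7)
My plan is to handle the two directions separately. For the forward implication, I will apply Corollary~\ref{cor:themethod}, using the observation that every $\phi \in \Aut(T; T/\kappa)$ lifts to an automorphism $\tilde\phi$ of $M$ via $\tilde\phi(s, t) = (s, t\phi)$. The multiplicativity check reduces to the identity $s(t \cdot s') = s((t\phi) \cdot s')$, which is precisely the condition $t \, \kappa \, t\phi$ guaranteed by $\phi$ fixing every $\kappa$-class setwise. Given two $n$-tuples $((s_i, t_i))_{i=1}^n$ and $((s_i', t_i'))_{i=1}^n$ of $M$, I impose the two conditions: (a) $s_i = s_i'$ for all $i$, which has $|S|^n$ choices and so only finitely many since $S$ is finite; and (b) $(t_1, \ldots, t_n) \, \sim_{T, T/\kappa, n} \, (t_1', \ldots, t_n')$, with finitely many choices by hypothesis. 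Under both conditions, lifting a witness $\phi$ from (b) produces an automorphism of $M$ carrying one tuple to the other, and Corollary~\ref{cor:themethod} delivers the $\aleph_0$-categoricity of $M$.

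For the converse, I sketch the argument in Subcase~1, where $S$ is a monoid with trivial group of units and $T$ acts monoidally; Subcase~2, where $S$ is a semilattice, proceeds analogously using the semilattice order in place of the unit group. In Subcase~1, $M$ is a monoid with identity $(1_S, 1_T)$ and $S \times \{1_T\}$ is a finite submonoid isomorphic to $S$. To each $n$-tuple $(t_1, \ldots, t_n)$ of $T$ I associate the padded $(n + |S|)$-tuple $((1_S, t_1), \ldots, (1_S, t_n), (a_1, 1_T), \ldots, (a_{|S|}, 1_T))$, where $a_1, \ldots, a_{|S|}$ enumerates $S$, and I apply the $\aleph_0$-categoricity of $M$ to these enlarged tuples.

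The heart of the proof is to show that any $\tilde\phi \in \Aut(M)$ fixing each $(a_i, 1_T)$ pointwise must have the form $\tilde\phi(s, t) = (s, t\phi)$ for some $\phi \in \Aut(T; T/\kappa)$. Writing $\tilde\phi(1_S, t) = (s_t, t_t)$ and using the factorisation $(s, t) = (s, 1_T)(1_S, t)$ together with the pointwise fixing of $S \times \{1_T\}$ yields $\tilde\phi(s, t) = (s \cdot s_t, t_t)$. Bijectivity of $\tilde\phi$ forces $s \mapsto s \cdot s_t$ to be a bijection of $S$ for every $t \in T$. Since $S$ is finite, a left inverse for $s_t$ then exists, and applying $\tilde\phi$'s injectivity to the equation $(s_t \cdot f^{-1}(1_S)) \cdot s_t = s_t = 1_S \cdot s_t$ (where $f(s) = s \cdot s_t$) produces a two-sided inverse of $s_t$, so $s_t$ is a unit of $S$, hence $s_t = 1_S$ by the hypothesis. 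Multiplicativity of $\tilde\phi$, as in the forward direction, then forces $t \, \kappa \, t\phi$ for all $t$, giving $\phi \in \Aut(T; T/\kappa)$. A counting argument mirroring the forward direction extracts the $\aleph_0$-categoricity of $T$ over $T/\kappa$.

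The principal obstacle is the identification step $s_t = 1_S$ in Subcase~1 (and its analog in Subcase~2). This is exactly where the structural hypotheses on $S$ become essential: without them automorphisms of $M$ can mix the two coordinates non-trivially, and the extraction of a $\kappa$-preserving automorphism of $T$ from an arbitrary automorphism of $M$ would fail.
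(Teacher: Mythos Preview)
Your forward direction and Subcase~1 of the converse are correct and essentially match the paper's argument. Your route to $s_t=1_S$ via bijectivity of right multiplication by $s_t$ is a minor variant of the paper's (which instead takes a preimage of $(1_S,t\phi)$ under $\theta$ to exhibit a right inverse of $s'$), but both arrive at the same conclusion and the remaining extraction of $\phi\in\Aut(T;T/\kappa)$ is the same.

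The gap is in Subcase~2. A semilattice need not have an identity, so the factorisation $(s,t)=(s,1_T)(1_S,t)$ is unavailable and there is no direct analogue of the unit argument to show that the first coordinate is preserved. The paper's treatment is genuinely different in mechanism: one uses the least element $0$ of the finite semilattice in place of $1_S$, defines $t\phi$ via $(0,t)\theta=(0',t\phi)$, and first shows $0'=0$ by left-multiplying by $(0,1)$. One then proves $(u,t)\theta=(u,t\phi)$ for all $u\in S$ by induction up the semilattice order: writing $(u,t)\theta=(u',t\phi)$ and left-multiplying by $(u,1)$ gives $u'=uu'\leq u$, while the inductive hypothesis (applied to $(u',t)$ if $u'<u$) would force the contradiction $(u',t)\theta=(u,t)\theta$. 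Your phrase ``using the semilattice order in place of the unit group'' points in the right direction, but the argument is an order induction rather than a drop-in replacement of the unit step, and this needs to be written out.
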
 
\begin{proof} Suppose that 
$T$ is $\aleph_0$-categorical over $T/\kappa$. 
Let $\underline{a}=((s_1,t_1),\dots,(s_n,t_n))$ and $\underline{b}=((s_1',t_1'),\dots,(s_n',t_n'))$ be $n$-tuples of $M$ under the conditions that
\begin{enumerate}
\item $s_k=s_k'$ for each $1\leq k \leq n$, 
\item $(t_1,\dots,t_n) \, \sim_{T,T/\kappa,n} \, (t_1',\dots,t_n')$ via $\theta\in \text{Aut}(T;T/\kappa)$, say. 
\end{enumerate}
Note that the first condition has $|S|^n$ choices. We claim that the bijection $\phi:M\rightarrow M$ given by $(s,t)\phi=(s,t\theta)$ is an automorphism of $M$. Given $(s,t),(s',t')\in M$,
\begin{align*}
((s,t)(s',t'))\phi & = (s(t\cdot s'),tt')\phi = (s(t \cdot s'),(tt')\theta)= (s(t\theta \cdot s'),(tt')\theta)\\
& = (s(t\theta \cdot s'),(t\theta)(t'\theta)) = (s,t\theta)(s',t\theta) = (s,t)\phi (s',t')\phi,
\end{align*}
where the third equality is due to $t \, \kappa \, t\theta$, so in particular $s(t \cdot s') = s(t\theta \cdot s')$. Hence $\phi$ is indeed an automorphism of $M$. Moreover, for each $1 \leq k \leq n$, 
\[ (s_k,t_k)\phi = (s_k,t_k\theta) = (s_k',t_k')\]
and so $\underline{a} \phi = \underline{b}$. We therefore have that 
\[ |M^n/\sim_{M,n}| \leq |S|^n \cdot | T^n/\sim_{T,T/\kappa,n}| < \aleph_0
\] 
since $S$ is finite and $T$ is $\aleph_0$-categorical over $T/\kappa$. Hence $M$ is $\aleph_0$-categorical by the RNT. 

Conversely, suppose that $M$ is \ale. Enumerate the elements of $S$ as
$\{ s_1,\hdots ,s_r\}$. Let $s=1$ if $S$ is a monoid and let $s=0$ where $0$ is the least idempotent of $S$ if $S$ is a semilattice. Let
$\underline{t}=(t_1,\hdots ,t_n)$ and $\underline{u}=(u_1,\hdots, u_n)$ be $n$-tuples of
$T$ under the conditions that 
\begin{enumerate}
\item $t_k \, \kappa\,  u_k$ for each $1\leq k \leq n$, 
\item $\big((s_1,1),\hdots ,(s_r,1), (s,t_1), \hdots ,(s,t_n)\big)  \, \sim_{M,n}
\, \big((s_1,1),\hdots ,(s_r,1), (s,u_1), \hdots ,(s,u_n)\big)$ via $\theta\in \Aut M$, say. 
\end{enumerate}

For any $t\in T$ we define $t\phi$ by $(s,t)\theta=(s',t\phi)$. We claim that $\phi\in \Aut T$ and preserves $T/\kappa$.

\bigskip {\em Case (i): $S$ a monoid with trivial group of units, so $s=1_S$}. We first show that $(1_S,t)\theta=(1_S,t\phi)$. By definition, we have $(1_S,t)\theta=(s',t\phi)$. Choose $(b,w)\in S\rtimes T$ such that $(b,w)\theta=(1_S, t\phi)$. Then
\[(1_S,t)\theta=(s',t\phi)=(s',1)(1_S, t\phi)=(s',1)\theta (b,w)\theta=
((s',1)(b,w))\theta=(s'b,w)\theta\]
so that $1_S=s'b$, giving $s'=1_S$ as $H_1$ is trivial and $S$ is finite (giving that an element with a left inverse lies in $H_1$). 

We have thus shown that $(1_S,t)\theta=(1_S, t\phi)$, whence it follows that for any $u\in S, t\in T$ we have
\[(u,t)\theta=\big((u,1)(1_S,t)\big)\theta=(u,1)(1_S,t\phi)=(u,t\phi).\]
It is now easy to see that $\phi\in \Aut T$, since $T$ acts monoidally. 

\bigskip {\em Case (ii): $S$ a semilattice, so $s=0$}.  We first show that $(0,t)\theta=(0,t\phi)$. 
To see this, notice that, making use of (2),
\[(0',t\phi)=(0,t)\theta=\big((0,1)(0,t)\big)\theta=
(0,1)(0',t\phi)=(0,t\phi)\]
giving $0'=0$. It is now easy to see that $\phi$ yields an automorphism of $T$.

Let $e\in S$ and $t\in T$ and suppose that $(e,t)\theta=(e',t')$. 
  Then
\[(0,t')=(0,1)(e',t')=(0,1)\theta (e,t)\theta
=\big((0,1)(e,t)\big)\theta=(0,t)\theta=(0,t\phi),\] 
so that $t'=t\phi$. Let $u\in S$ with $u>0$. We may then suppose for induction that for all $v\in S$ with $u>v$ and for all $t\in T$ we have $(v,t)\theta=(v,t\phi)$. Then
with $(u,t)\theta=(u',t\phi)$ we have
\[(u',t\phi)=(u,t)\theta=\big((u,1)(u,t)\big)\theta=(u,1)(u',t\phi)=(uu',t\phi),\]
so that $u'=uu'$ and $u'\leq u$. If $u'<u$ we are led to the contradiction that
$(u',t)\theta=(u',t\phi)=(u,t)\theta$. Thus $u'=u$ and we deduce that for any
$w\in S,t\in T$ we have $(w,t)\theta=(w,t\phi)$. 

\bigskip In each case,  for any $u,u'\in S$ and $t\in T$, by applying $\theta$ to  the product $(u,t)(u',1)$ we immediately see that
$t\,\kappa\, t\phi$. Moreover, as $(s,t_i)\theta=(s,u_i)$ we have
$t_i\phi=u_i$ for $1\leq i\leq n$. Thus $T$ is \ale \, over $T/\kappa$.
\end{proof}

\begin{open} Can we weaken the conditions on $S$ in the converse to  Proposition \ref{semidirect cat}? 
\end{open} 

\begin{example} Let $T$ be a semigroup acting trivially on a finite semigroup $S$, so that $t\cdot s = s$ for each $s\in S, t\in T$. It follows that $\kappa$ is the universal relation. Hence $S\rtimes T$ is $\aleph_0$-categorical if $T$ is $\aleph_0$-categorical over $\{T\}$, which is clearly equivalent to $T$ being $\aleph_0$-categorical. Note that $S\rtimes T$ is simply the direct product of $S$ and $T$, and so we recover Grzegorczyk's result \cite{Grzeg}. 
\end{example} 

\begin{example} Let $L=\{x_1,\dots,x_r\}$ be a finite left zero band and $S=\bigsqcup_{i\in I}^0  S_i$ an $\aleph_0$-categorical 0-direct union of 0-directly indecomposable $S_i$. Then as $L^0=L\cup \{0\}$ is  0-directly indecomposable and $\aleph_0$-categorical, it follow from Proposition \ref{cat 0-direct} that $S'=S\bigsqcup^0 L^0$ is also $\aleph_0$-categorical. Let $S'$ act on its ideal $L^0$ by left multiplication, so that $t\cdot s = ts$ for each $t\in S'$ and $s\in L^0$. Then this is an action by endomorphisms as 
\[ t\cdot (s_1s_2) = ts_1s_2 = 
 \left\{
\begin{array}{ll}
t & \text{if       } t\in L^0\\
0 & \text{else }  
\end{array}
\right. 
= (ts_1)(ts_2).
\] 
We aim to show that $L^0 \rtimes S'$ is $\aleph_0$-categorical. Notice that $t \, \kappa \, t'$ if and only if $sts'=st's'$ for all $s,s'\in L^0$. However, 
\[ sts' = 
 \left\{
\begin{array}{ll}
s & \text{if       } t\in L,\\
0 & \text{else }  
\end{array}
\right. 
\]  
and it follows that the $\kappa$-classes are $L$ and $S'\setminus L$. Since any automorphism of $S'$ which fixes $L$ setwise clearly fixes $S'\setminus L$ setwise,  we have  that  $S'$ is $\aleph_0$-categorical over $S'/\kappa$ if and only if $S'$ is $\aleph_0$-categorical over $\{L\}$. From Lemma \ref{cat over finite}, we have that $S'$ is $\aleph_0$-categorical over  $(x_1,\dots,x_r)$,  hence over $\{L\}$,  and so  over 
$S'/\kappa$.  Hence $L^0 \rtimes S'$ is $\aleph_0$-categorical by Proposition \ref{semidirect cat}. 
\end{example} 

Our final example comes from studying the semidirect product of a group and a semilattice. Such semigroups are examples of $E$-unitary inverse semigroups, a class that plays a central role in the study of inverse semigroups. A semigroup $S$ is \textit{$E$-unitary} if whenever $e,es\in E(S)$ then $s\in E(S)$. In the case of inverse semigroups, this condition is equivalent to $\mathcal{R}\cap\sigma=\iota$ (or, indeed, to
 $\mathcal{L}\cap\sigma=\iota$), a condition often referred to as that of being {\em proper}.  McAlister \cite{McAlister1, McAlister2} showed that every inverse semigroup has an $E$-unitary cover (a pre-image via an idempotent separating morphism) and  characterised the structure of $E$-unitary semigroups via what are known as
 $\mathcal{P}$-semigroups. The construction of a $\mathcal{P}$-semigroup is very close to that of a 
 semidirect product of a semilattice by a group, and certainly embeds into such \cite{ocarroll}. However, we stress that not every $\mathcal{P}$-semigroup is a semidirect product of a semilattice by a group.


 Let $\mathcal{X}$ be a partially ordered set with order $\leq$, and let $\mathcal{Y}$ be an order ideal of ${\X}$ which forms a semilattice under $\leq$. Let $G$ be a group which acts on $\mathcal{X}$ by order automorphisms, and suppose in addition that
\begin{enumerate}
\item $G\mathcal{Y}=\mathcal{X}$ and 
\item $g\mathcal{Y}\cap \mathcal{Y}\neq \emptyset$ for all $g\in G, Y\in \mathcal{Y}$.
 \end{enumerate}   Then the triple $(G,\mathcal{X},\mathcal{Y})$ is called a 
 {\em McAlister triple}. We may then take 
\[ \mathcal{P}=\mathcal{P}(G,{\X},{\Y}) = \{(A,g)\in {\Y}\times G: g^{-1}A\in \Y\}
\] 
and define an operation on $\mathcal{P}$ by the rule 
\[(A,g)(B,h)=(A\wedge gB,gh).\]

\begin{theorem}\cite{McAlister2} Let $(G,\mathcal{X},\mathcal{Y})$ be a  
 {\em McAlister triple}. Then 
$ \mathcal{P}=\mathcal{P}(G,{\X},{\Y})$ is an $E$-unitary inverse semigroup
with semilattice of idempotents $E(\mathcal{P})=\mathcal{Y}\times \{ 1\}$ isomorphic to $\Y$. For any $(A,g),(B,h)\in \mathcal{P}$ we have
\[(A,g)\,\ar\, (B,h)\Leftrightarrow A=B\mbox{ and }
(A,g)\,\el\, (B,h)\Leftrightarrow g^{-1}A=h^{-1}B.\]Moreover, any $E$-unitary inverse semigroup $S$ is isomorphic to some
$\mathcal{P}(G,\mathcal{X},\mathcal{Y})$  where $G=S/\sigma$ and
$\mathcal{Y}=E(S)$.
\end{theorem}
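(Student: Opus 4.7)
The plan is to verify the structural properties of $\mathcal{P}$ first, then tackle the converse, which is the deep part of McAlister's theorem. Throughout I will repeatedly use that $G$ acts on $\X$ by order automorphisms, so $g(A\wedge B)=gA\wedge gB$ whenever the meet exists, and that $\Y$ is an order ideal of $\X$, so meets in $\X$ involving an element of $\Y$ land back in $\Y$.

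First I would check that the binary operation is well-defined: for $(A,g),(B,h)\in\mathcal{P}$ we have $gB\in g\Y\subseteq \X$, and $gB\wedge A$ exists in $\X$ and lies in $\Y$ since $A\in\Y$ and $\Y$ is an order ideal; applying $(gh)^{-1}$ gives $h^{-1}g^{-1}A\wedge h^{-1}B$, and the latter is in $\Y$ by the same order-ideal argument. Associativity is then a line of calculation:
\[
\bigl((A,g)(B,h)\bigr)(C,k)=(A\wedge gB\wedge ghC,\,ghk)=(A,g)\bigl((B,h)(C,k)\bigr).
\]
Direct checking shows $(A,g)^{-1}=(g^{-1}A,g^{-1})$ is the unique inverse (so $\mathcal{P}$ is regular with commuting idempotents, hence inverse), that the idempotents are precisely the elements $(A,1)$, and that $(A,1)\mapsto A$ is a semilattice isomorphism $E(\mathcal{P})\to\Y$. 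For $E$-unitariness, if $(A,1)$ and $(A,1)(B,g)=(A\wedge B,g)$ are both idempotent then $g=1$, so any element above an idempotent is idempotent.

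For Green's relations, I would use the standard inverse-semigroup description $a\,\ar\, b\iff aa^{-1}=bb^{-1}$. Computing $(A,g)(A,g)^{-1}=(A,g)(g^{-1}A,g^{-1})=(A\wedge A,1)=(A,1)$ gives $(A,g)\,\ar\,(B,h)\iff A=B$. The symmetric calculation yields $(A,g)\,\el\,(B,h)\iff g^{-1}A=h^{-1}B$.

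The main obstacle, and the genuinely nontrivial half, is the converse: reconstructing a McAlister triple from an arbitrary $E$-unitary inverse semigroup $S$. Here I would set $G=S/\sigma$ (which is a group since $S$ is inverse) and $\Y=E(S)$, and then carefully build the poset $\X$. The classical construction takes $\X$ to be the set of pairs $(e,g)\in E(S)\times G$ satisfying a compatibility condition (essentially that $e\leq hh^{-1}$ for some preimage $h$ of $g^{-1}$, or equivalently working with a quotient of a suitable subset of $E(S)\times G$), partially ordered componentwise-with-twist, with $G$ acting on the second coordinate. One then identifies $\Y$ with the obvious copy $\{(e,1):e\in E(S)\}\subseteq \X$, checks that $\Y$ is an order ideal and subsemilattice, verifies $G\Y=\X$ and $g\Y\cap\Y\neq\emptyset$ (the latter using the $E$-unitary hypothesis to lift each $g\in G$ to some $s\in S$ and take the meet of $ss^{-1}$ with elements of $E(S)$), and finally exhibits the isomorphism $S\to\mathcal{P}(G,\X,\Y)$ sending $s\mapsto (ss^{-1},s\sigma)$. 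The bulk of the work lies in showing this assignment is well-defined, injective (this is precisely where $E$-unitariness is used, via $\ar\cap\sigma=\iota$), surjective onto $\mathcal{P}$, and multiplicative. Since this is exactly McAlister's original theorem from \cite{McAlister2}, I would either cite that reference for the converse or else develop the construction as sketched above in full detail.
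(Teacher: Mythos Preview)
The paper does not prove this theorem at all: it is stated with a citation to \cite{McAlister2} and then used as background. Your proposal therefore supplies considerably more than the paper does, and the sketch you give is the standard one and essentially correct.

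One small point to tighten in your well-definedness argument: you assert that $A\wedge gB$ exists in $\X$ ``since $A\in\Y$ and $\Y$ is an order ideal'', but $\X$ is only a poset, so existence of that meet is not automatic. The clean argument uses the defining condition $g^{-1}A\in\Y$ for $(A,g)\in\mathcal{P}$: since $g^{-1}A,B\in\Y$ and $\Y$ is a semilattice, $g^{-1}A\wedge B$ exists in $\Y$; applying the order automorphism $g$ gives $A\wedge gB=g(g^{-1}A\wedge B)$, which then lies in $\Y$ because it is below $A$ and $\Y$ is an order ideal. With that adjustment your forward direction is complete, and your decision to cite \cite{McAlister2} for the converse matches exactly what the paper does.
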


The semigroup $\mathcal{M}(G,\X,\Y)$ is often referred to as a {\em $\mathcal{P}$-semigroup}. Notice that if $\X=\Y$ then $\mathcal{M}(G,\X,\Y)=\Y\rtimes G$.

For any $\mathcal{P}$-semigroup $\mathcal{P}=\mathcal{P}(G,\X,\Y)$ we have $\mathcal{P}/\sigma\cong G$, and so the  $\aleph_0$-categoricity of $\mathcal{P}$ passes to $G$ by Proposition \ref{factor}. Our aim is therefore to consider when the converse holds, or rather,  what conditions on $G$ force $\mathcal{P}$ to be $\aleph_0$-categorical?
 We require  McAlister's \cite{McAlister2}  description of morphisms between $\mathcal{P}$-semigroups, which simplifies to automorphisms as follows. 

\begin{theorem}\label{theorem:paut} Let $\mathcal{P}=\mathcal{M}(G,{\X},{\Y})$ be a $\mathcal{P}$-semigroup. Let $\theta\in \Aut (G)$ and $\psi:\X\rightarrow \X$ an order-automorphism such that $\psi|_{\mathcal{Y}}\in \Aut (\Y)$. Suppose also that, for all $g\in G$ and $A\in \X$, 
\[ (gA)\psi=(g\theta)(A\psi). 
\] 
Then the map $\phi:\mathcal{P}\rightarrow \mathcal{P}$ given by $(A,g)\phi=(A\psi,g\theta)$ is an automorphism, denoted $\phi=(\psi;\theta)$. Conversely, every automorphism of $\mathcal{P}$ is of this type. 
\end{theorem}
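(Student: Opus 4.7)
The plan is to prove the two directions separately.

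For the forward direction, given $\theta$ and $\psi$ satisfying the hypotheses, I first verify that $\phi(A,g) = (A\psi, g\theta)$ maps into $\mathcal{P}$: we have $A\psi \in \Y$ since $\psi|_\Y \in \Aut(\Y)$, while $(g\theta)^{-1}(A\psi) = (g^{-1}A)\psi \in \Y$ by the compatibility condition together with $g^{-1}A \in \Y$. That $\phi$ respects multiplication reduces to the identity
\[ (A \wedge gB)\psi = A\psi \wedge (g\theta)(B\psi), \]
which holds because $\psi$ is an order-automorphism of $\X$ (hence preserves existing meets) and $(gB)\psi = (g\theta)(B\psi)$. Bijectivity is clear since $(\psi^{-1}; \theta^{-1})$ satisfies the hypotheses and serves as the inverse of $\phi$.

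For the converse, let $\phi \in \Aut(\mathcal{P})$. The congruence $\sigma$ is preserved by all automorphisms (Example~\ref{ex:congs}(2)), inducing an automorphism $\theta$ of $G \cong \mathcal{P}/\sigma$. Since $E(\mathcal{P}) = \Y \times \{1\}$ is a characteristic subsemigroup of $\mathcal{P}$, $\phi$ restricts to an automorphism of it, yielding an order-automorphism $\psi_0$ of $\Y$. Writing $\phi(A, g) = (A', g\theta)$ for $(A, g) \in \mathcal{P}$, I would use that $\ar$ is preserved by $\phi$ together with $(A, g) \, \ar \, (A, 1)$ and $\phi(A, 1) = (A\psi_0, 1)$ to force $A' = A\psi_0$. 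Hence $\phi(A, g) = (A\psi_0, g\theta)$ throughout $\mathcal{P}$.

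The crux is extending $\psi_0$ to an order-automorphism $\psi$ of $\X$ satisfying the required equivariance. Because $G\Y = \X$, any $A \in \X$ admits a representation $A = gY$ with $Y \in \Y$, and the compatibility condition forces $A\psi := (g\theta)(Y\psi_0)$. The main obstacle is well-definedness: if $g_1 Y_1 = g_2 Y_2 = A$, setting $h = g_1^{-1}g_2$ gives $Y_1 = h Y_2$, so $(Y_1, h) \in \mathcal{P}$ with inverse $(Y_2, h^{-1})$. Applying $\phi$, using that it preserves inverses, and comparing first coordinates yields $Y_1\psi_0 = (h\theta)(Y_2\psi_0)$, whence $(g_1\theta)(Y_1\psi_0) = (g_2\theta)(Y_2\psi_0)$. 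Once well-definedness is secured, the order-automorphism property of $\psi$ follows by observing that any $A \leq B$ in $\X$ can be written $A = gY_A$, $B = gY_B$ with common $g$ (choose $g$ so that $g^{-1}B \in \Y$, forcing $g^{-1}A \in \Y$ since $\Y$ is an order ideal), after which $\psi_0$ being order-preserving on $\Y$ combined with the $G$-action by order-automorphisms transports the inequality back to $A\psi \leq B\psi$. The inverse of $\psi$ arises by applying the construction to $\phi^{-1}$. This effectively specializes McAlister's general morphism description \cite{McAlister2}.
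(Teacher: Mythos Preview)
The paper does not supply its own proof of this theorem; it is stated as a specialisation of McAlister's description of morphisms between $\mathcal{P}$-semigroups in \cite{McAlister2}, with no argument given in the present article. Your proposal, by contrast, writes out a self-contained proof.

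Your argument is correct. For the forward direction, the key checks---that $\phi$ lands in $\mathcal{P}$, that an order-automorphism of $\X$ preserves existing infima, and that $(\psi^{-1};\theta^{-1})$ furnishes the inverse---are all accurate and well organised. For the converse, the strategy of reading off $\theta$ from the induced automorphism of $\mathcal{P}/\sigma\cong G$, reading off $\psi_0$ from the restriction to $E(\mathcal{P})\cong\Y$, and then using preservation of $\ar$ to pin down the first coordinate is exactly the natural one. Your well-definedness argument for the extension $\psi$ is the only genuinely delicate point, and you handle it correctly: writing $Y_1=hY_2$ gives $(Y_1,h)\in\mathcal{P}$ with inverse $(Y_2,h^{-1})$, and comparing $\phi(Y_1,h)^{-1}$ with $\phi(Y_2,h^{-1})$ forces $Y_1\psi_0=(h\theta)(Y_2\psi_0)$. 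The order-preservation argument via a common translate $g$ (using that $\Y$ is an order ideal) is also sound, and the compatibility $(gA)\psi=(g\theta)(A\psi)$ follows immediately from the definition of $\psi$.

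A minor point worth making explicit: when you write $\phi(A,g)=(A',g\theta)$, you are using that the $\sigma$-class of $(A,g)$ is determined by $g$ alone (i.e.\ $(A,g)\,\sigma\,(B,h)$ iff $g=h$), which is the content of the isomorphism $\mathcal{P}/\sigma\cong G$ quoted just before the theorem. Otherwise the proof is complete, and as you note at the end, it is effectively the automorphism case of McAlister's morphism theorem.
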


Notice that if $\psi$ is the identity, then the automorphism $\theta$ of $G$ is required to preserve the group action, that is, 
\begin{equation} \label{condition1} gA=(g\theta)A \quad  (\forall g\in G)(\forall A\in  \mathcal{X}).
\end{equation} 

 On the other hand, if $\theta$ is the identity, then we require that $\psi$ is a 
 $G$-act morphism, in addition to being an order-automorphism, that is,
\begin{equation} \label{condition2}(gA)\psi=g(A\psi).\end{equation}

We first consider the case of  finite $\Y$. For each $A\in \Y$
let $T_A\subseteq G$ be defined by
\[ T_A=\{ g\in G: g^{-1}A\in \Y\},\] 
noting that $G=\bigcup_{A\in \mathcal{Y}} T_A$ since $g\Y \cap \Y \neq \emptyset$ for each $g\in G$. We also define a relation $\sim_A$  on $T_A$  by:
\[g\sim_A h\Leftrightarrow g^{-1}U=h^{-1}U\mbox{ for all }U\leq A.\]
Clearly each $\sim_A$ is an equivalence. Let $\mathcal{A}=\bigcup_{A\in \mathcal{Y}}T_A/\sim_A$.  

The first part of the following lemma is immediate from \cite[p.217]{Howie94} and the second part is an easy consequence of the first. 
\begin{lemma} \label{lem:mu} Let $\mathcal{P}=\mathcal{P}(G,{\X},{\Y})$ be a $\mathcal{P}$-semigroup. Then 
\[(A,g)\,\mu\, (B,h)\Leftrightarrow A=B\mbox{ and }
g\sim_A h.\] 
Thus $\mu$ has finitely many classes if and only if each $\sim_A$ has  finitely many classes and $\Y$ is finite. 
If $\X$ is finite, then $\mathcal{P}/\mu$ is finite.
\end{lemma}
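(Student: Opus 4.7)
The plan is to take the first assertion as granted from the cited page of Howie (since the paper explicitly says so), and then to use it as a combinatorial bookkeeping device for the remaining two statements. The key observation is that, by the first assertion, the $\mu$-classes of $\mathcal{P}$ are in bijection with the disjoint union $\bigsqcup_{A\in\Y} T_A/\!\sim_A$: each $\mu$-class corresponds uniquely to a pair consisting of an element $A\in\Y$ together with a $\sim_A$-class of $T_A$. Thus counting $\mu$-classes reduces to understanding this indexed family of quotient sets.

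For the second statement I will argue in both directions from this bijection. If every $\sim_A$ has only finitely many classes and $\Y$ is finite, then $\bigsqcup_{A\in\Y}T_A/\!\sim_A$ is a finite union of finite sets, hence finite. Conversely, if $|\mathcal{P}/\mu|$ is finite, then since each $T_A$ is non-empty (because $1\in T_A$ whenever $A\in\Y$, as $1^{-1}A=A\in\Y$) every $T_A/\!\sim_A$ contributes at least one class; this forces $\Y$ to be finite and each $T_A/\!\sim_A$ to be finite.

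For the third statement, assume $\X$ is finite. Then $\Y\subseteq\X$ is finite, and for each $A\in\Y$ the set $\{U\in\Y:U\leq A\}$ is finite; write it as $\{U_1,\ldots,U_r\}$. The map $T_A\to\X^{r}$ sending $g\mapsto(g^{-1}U_1,\ldots,g^{-1}U_r)$ has image in a finite set, and by the definition of $\sim_A$ its fibres are precisely the $\sim_A$-classes. Hence each $T_A/\!\sim_A$ is finite, and by the second statement $\mathcal{P}/\mu$ is finite.

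There is no real obstacle here: once the Howie description of $\mu$ is in hand, the remaining work is a transparent counting exercise, and the only small point requiring care is checking that $T_A\neq\emptyset$ for every $A\in\Y$ so that the finiteness of $\Y$ is genuinely forced in the converse direction of the second statement.
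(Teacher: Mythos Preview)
Your proposal is correct and matches the paper's approach: the paper simply cites \cite[p.~217]{Howie94} for the description of $\mu$ and declares the remaining finiteness claims to be ``an easy consequence of the first'', without giving further detail. You have filled in exactly the natural argument the paper omits---the bijection $\mathcal{P}/\mu \leftrightarrow \bigsqcup_{A\in\Y} T_A/{\sim_A}$ together with the observation $1\in T_A$ for the converse direction, and the fibre argument via $g\mapsto (g^{-1}U_1,\ldots,g^{-1}U_r)$ for the case $\X$ finite---so there is nothing to add.
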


\begin{proposition}\label{prop:yfinite} Let $\mathcal{P}=\mathcal{P}(G,{\X},{\Y})$ be a $\mathcal{P}$-semigroup such that $\mathcal{P}/\mu$ is finite. Then $\mathcal{P}$ is $\aleph_0$-categorical if and only if $G$ is $\aleph_0$-categorical over $\mathcal{A}$. 
\end{proposition}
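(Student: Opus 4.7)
The strategy is to translate $\aleph_0$-categoricity of $\mathcal{P}$ into that of $G$ over $\mathcal{A}$ by exploiting Theorem~\ref{theorem:paut}: every automorphism of $\mathcal{P}$ has the shape $(\psi;\theta)$ with $\theta\in\Aut(G)$ and $\psi$ an order-automorphism of $\X$ restricting to $\Aut(\Y)$, subject to the compatibility $(gA)\psi=(g\theta)(A\psi)$. The hypothesis that $\mathcal{P}/\mu$ is finite forces, via Lemma~\ref{lem:mu}, that $\Y=\{Y_1,\dots,Y_k\}$ is finite and hence so is $\mathcal{A}$. Both directions pivot on the following \emph{bridge lemma}, which I would prove first: for $(\psi;\theta)\in\Aut\mathcal{P}$, the condition $\psi|_\Y=\mathrm{id}_\Y$ is equivalent to $\theta\in\Aut(G;\mathcal{A})$. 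The key calculation is: if $\psi|_\Y=\mathrm{id}_\Y$, $g\in T_A$ and $U\leq A$, then $V:=g^{-1}U$ lies in $\Y$ (using that $\Y$ is an order ideal of $\X$ and $V\leq g^{-1}A\in\Y$), so by compatibility $U=U\psi=(gV)\psi=(g\theta)V$, forcing $(g\theta)^{-1}U=V=g^{-1}U$ and hence $g\sim_A g\theta$. Conversely, given $\theta\in\Aut(G;\mathcal{A})$, the formula $\psi(gY):=(g\theta)Y$ ($g\in G$, $Y\in\Y$) defines the required $\psi$, with well-definedness and injectivity reducing to the fact that $hY_1=Y_2$ ($Y_1,Y_2\in\Y$) forces $h\sim_{Y_2}h\theta$, whence $(h\theta)^{-1}Y_2=h^{-1}Y_2=Y_1$.

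For the forward direction, assume $\mathcal{P}$ is \ale. Fix any function $\alpha:G\to\Y$ with $g\in T_{\alpha(g)}$ (exists since $G=\bigcup_{A\in\Y}T_A$), and given $\underline{g},\underline{h}\in G^n$ impose the condition that the $(n+k)$-tuples
\[\big((\alpha(g_1),g_1),\dots,(\alpha(g_n),g_n),(Y_1,1),\dots,(Y_k,1)\big)\]
and
\[\big((\alpha(h_1),h_1),\dots,(\alpha(h_n),h_n),(Y_1,1),\dots,(Y_k,1)\big)\]
are $\sim_{\mathcal{P},n+k}$-equivalent; this has only finitely many choices by the RNT. Any witnessing automorphism $(\psi;\theta)$ fixes every idempotent $(Y_j,1)$, hence $\psi|_\Y=\mathrm{id}_\Y$, so $\theta\in\Aut(G;\mathcal{A})$ by the bridge lemma, and $(\alpha(g_i),g_i)(\psi;\theta)=(\alpha(g_i),g_i\theta)=(\alpha(h_i),h_i)$ forces $g_i\theta=h_i$. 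Thus $\underline{g}\sim_{G,\mathcal{A},n}\underline{h}$, and Corollary~\ref{cor:themethod} yields the $\aleph_0$-categoricity of $G$ over $\mathcal{A}$.

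For the converse, assume $G$ is \ale\ over $\mathcal{A}$. Given $n$-tuples $\underline{s}=((A_1,g_1),\dots,(A_n,g_n))$ and $\underline{s}'=((B_1,h_1),\dots,(B_n,h_n))$ of $\mathcal{P}$, impose (i) $A_i=B_i$ for each $i$ (at most $|\Y|^n$ choices) and (ii) $(g_1,\dots,g_n)\sim_{G,\mathcal{A},n}(h_1,\dots,h_n)$ via some $\theta\in\Aut(G;\mathcal{A})$ (finitely many choices by hypothesis). The bridge lemma supplies $\psi$ with $\psi|_\Y=\mathrm{id}_\Y$ such that $(\psi;\theta)\in\Aut\mathcal{P}$, and $(A_i,g_i)(\psi;\theta)=(A_i,g_i\theta)=(A_i,h_i)=(B_i,h_i)$. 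A further appeal to Corollary~\ref{cor:themethod} finishes the proof.

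The main obstacle is the bridge lemma: tightly matching the geometric condition $\psi|_\Y=\mathrm{id}_\Y$ with the class-fixing condition defining $\Aut(G;\mathcal{A})$. Both directions of this equivalence crucially use that $\Y$ is an order ideal of $\X$, so that for $g\in T_A$ and $U\leq A$ one automatically has $g^{-1}U\in\Y$; without this, the compatibility $(gA)\psi=(g\theta)(A\psi)$ in Theorem~\ref{theorem:paut} does not reduce to a condition on $\sim_A$-classes of $T_A$. Once the bridge lemma is established, both directions of the proposition become routine Ryll-Nardzewski counting arguments.
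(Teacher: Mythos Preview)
Your proof is correct and runs on the same core calculation as the paper (namely, that $\psi|_{\Y}=\mathrm{id}_{\Y}$ forces $g\sim_A g\theta$ for every $g\in T_A$), but it is organised a little differently and in one place more economically. Two remarks.

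First, your bridge lemma is mis-stated as a biconditional. The implication ``$(\psi;\theta)\in\Aut\mathcal{P}$ and $\theta\in\Aut(G;\mathcal{A})$ $\Rightarrow$ $\psi|_{\Y}=\mathrm{id}_{\Y}$'' is false (take $G$ trivial and $\Y$ a three-element semilattice with a nontrivial automorphism). What you actually prove and use is: (a) if $(\psi;\theta)\in\Aut\mathcal{P}$ with $\psi|_{\Y}=\mathrm{id}_{\Y}$ then $\theta\in\Aut(G;\mathcal{A})$; and (b) for each $\theta\in\Aut(G;\mathcal{A})$ there \emph{exists} $\psi$ with $\psi|_{\Y}=\mathrm{id}_{\Y}$ and $(\psi;\theta)\in\Aut\mathcal{P}$. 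These are correct and are exactly what your two directions need.

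Second, on the comparison with the paper: in the forward direction the paper appends a full transversal of the $\mu$-classes (size $|\mathcal{P}/\mu|$), whereas you append only the $|\Y|$ idempotents $(Y_j,1)$; since the paper's deduction, like yours, ultimately uses nothing beyond $\psi|_{\Y}=\mathrm{id}_{\Y}$, your version is a genuine simplification. In the converse direction the paper does not build $\psi$ on $\X$ at all; it defines $(A,g)\mapsto(A,g\theta)$ directly and checks the homomorphism identity $A\wedge gB=A\wedge(g\theta)B$ via $g\sim_A g\theta$. Your route through Theorem~\ref{theorem:paut} is equivalent but obliges you to verify that your $\psi$ is an order-automorphism of $\X$; this holds because for $A\leq B$ in $\X$ one may write $A=gY'$, $B=gY$ with $Y'\leq Y$ in $\Y$ (using that $\Y$ is an order ideal and $G$ acts by order-automorphisms), and then $A\psi=(g\theta)Y'\leq (g\theta)Y=B\psi$. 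You should make that step explicit.
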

\begin{proof} Let $\Y=\{ Y_1,\hdots ,Y_k\}$ and for each $Y_i\in \Y$ pick a set of representatives
\[(Y_i,g_1^i),\hdots, (Y_i,g_{n(i)}^i)\]
of the $\mu$ classes sitting in the $\ar$-class of $(Y_i,1)$, so that 
$g_1^i,\hdots, g_{n(i)}^i$
are representatives of the $\sim_{Y_i}$-classes.

$(\Rightarrow)$ Let $\underline{g}=(a_1,\dots,a_n)$ and $\underline{h}=(b_1,\dots,b_n)$ be a pair of $n$-tuples of $G$. Then for any $j$ with $ 1\leq j\leq n$, as $a_j^{-1} \mathcal{Y} \cap \mathcal{Y} \neq \emptyset$ there exists  $A_j\in \mathcal{Y}$ such that $(A_j,a_j)\in \mathcal{P}$; similarly form $(B_j,b_j)\in \mathcal{P}$. 
 Impose the condition  that 
\[ \big((Y_1,g^1_1),\hdots, (Y_1,g^1_{n(1)}), \hdots,
(Y_k,g^k_1),\hdots, (Y_k,g^k_{n(k)}),(A_1,a_1),\hdots \dots,(A_n,a_n)\big) \]
and 
\[ \big((Y_1,g^1_1),\hdots, (Y_1,g^1_{n(1)}), \hdots,
(Y_k,g^k_1),\hdots, (Y_k,g^k_{n(k)} ),(B_1,b_1),\dots,(B_n,b_n)\big) \]
are related by $\, \sim_{S,t+n} \, $  where $t=|\mathcal{P}/\mu|$, 
via  $\phi=(\psi;\theta)\in \text{Aut}(\mathcal{P})$, say. Clearly each $\mu$-class 
is fixed by $\phi$ and in particular each $A\in\Y$  is fixed by $\psi$. Suppose that
$g\in T_A$. 
Then 
\[ g^{-1}A=(g^{-1}A)\psi =(g^{-1}\theta) (A\psi)=(g^{-1}\theta) A=(g\theta)^{-1}A\]
so that $g\theta\in T_A$. Moreover, for any $U\leq A$ we have $g^{-1}U\in \Y$ and 
\[ g^{-1}U=(g^{-1}U)\psi=(g\theta)^{-1}(U\psi)=(g\theta)^{-1}U,\]
so that $g\sim_A g\theta$. Thus $\theta\in\Aut G$ and $\theta$ preserves $\mathcal{A}$. Clearly 
$a_i\theta=b_i$ for $1\leq i\leq n$. Hence $G$ is $\aleph_0$-categorical over $\mathcal{A}$.
 
$(\Leftarrow)$ Since  $\mathcal{P}/\mu$ and hence $\mathcal{Y}$ are  finite and 
$G$ is $\aleph_0$-categorical over $\mathcal{A}$, from any infinite list of $n$-tuples of elements of
$\mathcal{P}$ we may pick out a pair  $\underline{a}=((A_1,g_1),\dots,(A_n,g_n))$ and $\underline{b}=((A_1,h_1),\dots,(A_n,h_n))$  such that $(g_1,\dots,g_n) \, \sim_{G,\mathcal{A},n} \, (h_1,\dots,h_n)$, via some $\theta\in \text{Aut}(G;\mathcal{A})$. 
Define $\phi:\mathcal{P}\rightarrow \mathcal{P}$ by $(A,g)\phi=(A,g\theta)$. 
Let $g\in G$ and $A,B\in \Y$ with $g^{-1}A\in \Y$. Since $g\in T_A $ and
$A\wedge gB$ exists and is less than $A$, we have 
$g^{-1}(A\wedge  gB)=(g\theta)^{-1}(A \wedge gB)$. Also,
$A\wedge gB=gg^{-1}(A\wedge gB)=g(g^{-1}A\wedge B)=g((g\theta)^{-1}A\wedge B)$. It follows that  $A\wedge gB=A\wedge (g\theta)B$ so that, consequently, $\phi$ is an isomorphism. 
Hence $\mathcal{P}$ is  $\aleph_0$-categorical. 
\end{proof}

Given a $\mathcal{P}$-semigroup $\mathcal{P}=\mathcal{P}(G,{\X},{\Y})$, we define a relation $\nu$ on $G$ by 
\begin{equation} \label{action rel} g \, \nu \, h \Leftrightarrow gA = hA \,\,  (\forall A\in \X). 
\end{equation}
Then $\nu$ is clearly an equivalence relation, and if $g \, \nu \, h$ then $g \, \sim_A \, h$ for any $A\in \Y$, with $g\in T_A$ if and only if $h\in T_A$. 
Hence if $g\in G$ then $g\nu$ is a subset of an element of $T_A/ \sim_A$ for some $A\in \Y$. It follows that if $G$ is $\aleph_0$-categorical over $G/\nu$, then $G$ is $\aleph_0$-categorical over $\mathcal{A}$. 
 If  $\mathcal{X}$ is finite, then  there are only finitely many $\nu$-classes. Lemma~\ref{lem:mu} and Proposition~\ref{prop:yfinite} thus prove the reverse  direction to the following result: 
 
\begin{proposition}\label{prop:yfinite} Let $\mathcal{P}=\mathcal{P}(G,{\X},{\Y})$ be a $\mathcal{P}$-semigroup such that $\mathcal{X}$ is finite. Then $\mathcal{P}$ is $\aleph_0$-categorical if and only if $G$ is $\aleph_0$-categorical over $G/\nu$. 
\end{proposition}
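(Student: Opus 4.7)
My plan is to handle the two directions separately. For $(\Leftarrow)$, as the authors indicate, $\X$ being finite yields $\mathcal{P}/\mu$ finite by Lemma~\ref{lem:mu}, so by Proposition~\ref{prop:yfinite} it suffices to show that $G$ is $\aleph_0$-categorical over $\mathcal{A}$. This follows from the inclusion $\Aut(G;G/\nu)\subseteq \Aut(G;\mathcal{A})$: if $\theta$ preserves every $\nu$-class then $g\theta\,\nu\,g$, hence $(g\theta)^{-1}U=g^{-1}U$ for every $U\in\X$, so $\theta$ also preserves each $\sim_A$-class. The inclusion means orbits of $\Aut(G;\mathcal{A})$ are no finer than those of $\Aut(G;G/\nu)$, so $\aleph_0$-categoricity passes from $G/\nu$ to $\mathcal{A}$.

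For $(\Rightarrow)$, the substantive direction, Proposition~\ref{prop:yfinite} gives that $G$ is $\aleph_0$-categorical over $\mathcal{A}$, and my plan is to bootstrap this to $\aleph_0$-categoricity over $G/\nu$ via a finite-index subgroup argument. Let $N := 1\nu = \{g\in G: gA=A\text{ for all }A\in\X\}$, the kernel of the action $G\to\operatorname{Sym}(\X)$; then $\nu$ is the coset relation of $N$ and $G/\nu = G/N$ is finite since $\X$ is. I aim to show $N$ is setwise fixed by every element of $\Aut(G;\mathcal{A})$, so that $\Aut(G;\mathcal{A})$ acts on the finite set $G/\nu$ by permutation with kernel precisely $\Aut(G;G/\nu)$; the resulting finite index $[\Aut(G;\mathcal{A}):\Aut(G;G/\nu)]\leq |G/\nu|!$ will then transfer finiteness of orbits on $G^n$ from the larger group to the smaller via the standard observation that each orbit of $\Aut(G;\mathcal{A})$ decomposes into at most $[\Aut(G;\mathcal{A}):\Aut(G;G/\nu)]$ orbits of $\Aut(G;G/\nu)$.

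The key algebraic step, which I expect to be the main obstacle, is an intrinsic description of $N$ in terms of $\mathcal{A}$. Set $K := \bigcap_{Y\in\Y}\operatorname{Stab}_G(Y)$. Unwinding the definition of $\sim_Y$ shows that $1\sim_Y \;=\; \bigcap_{U\leq Y,\,U\in\Y}\operatorname{Stab}_G(U)$, and intersecting over $Y\in\Y$ gives $K = \bigcap_{Y\in\Y}(1\sim_Y)$. Each class $1\sim_Y$ is an element of $\mathcal{A}$ and is fixed setwise by every $\theta\in \Aut(G;\mathcal{A})$ (since $1\theta=1$), hence $K$ is characteristic under $\Aut(G;\mathcal{A})$. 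Using $\X = G\Y$, a short computation gives $g\in N$ iff $h^{-1}gh\in K$ for every $h\in G$, so that $N = \bigcap_{h\in G}hKh^{-1}$ is the core of $K$ in $G$. The core of a characteristic subgroup is itself characteristic by a standard conjugation argument (apply $\theta\in\Aut(G;\mathcal{A})$ to the intersection and use that $\theta$ permutes the conjugates of $K$), completing the key step; the remaining orbit-counting is routine.
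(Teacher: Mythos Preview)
Your proof is correct, and the $(\Leftarrow)$ direction coincides with the paper's. For $(\Rightarrow)$, however, you take a genuinely different route.

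The paper argues directly from $\mathcal{P}$: for each $X_i\in\X$ it fixes a decomposition $X_i=g_iY_i$ with $Y_i\in\Y$, chooses $(C_i,g_i)\in\mathcal{P}$, and appends the elements $(C_i,g_i)$ and $(Y_i,1)$ to any $n$-tuple lifted from $G$. An automorphism $(\psi;\theta)$ of $\mathcal{P}$ fixing these appended elements must fix each $Y_i$ (via $\psi$) and each $g_i$ (via $\theta$), hence $X_i\psi=(g_i\theta)(Y_i\psi)=g_iY_i=X_i$, forcing $\psi=I_{\X}$. The compatibility condition $(gA)\psi=(g\theta)(A\psi)$ then collapses to $gA=(g\theta)A$, so $g\,\nu\,g\theta$ and $\theta\in\Aut(G;G/\nu)$.

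Your argument instead factors through the preceding proposition (the $\mathcal{P}/\mu$-finite version) to obtain $\aleph_0$-categoricity over~$\mathcal{A}$, and then upgrades to $G/\nu$ by a purely group-theoretic finite-index computation: you identify $N=1\nu$ as the core of $K=\bigcap_{Y\in\Y}(1\!\sim_Y)$, observe that each $1\!\sim_Y\in\mathcal{A}$ is $\Aut(G;\mathcal{A})$-invariant, deduce that $N$ is as well, and conclude that $\Aut(G;G/\nu)$ is the kernel of the action of $\Aut(G;\mathcal{A})$ on the finite set $G/N$. This is clean and avoids touching $\mathcal{P}$ again. The paper's approach, by contrast, is self-contained for this proposition and makes the role of Theorem~\ref{theorem:paut} (the description of $\Aut(\mathcal{P})$) more transparent; it also yields the slightly sharper observation that the relevant $\theta$ actually arises from an automorphism of $\mathcal{P}$ with $\psi=I_{\X}$, rather than merely from some element of $\Aut(G;G/\nu)$. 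One small remark: your parenthetical ``since $1\theta=1$'' is superfluous---the classes $1\!\sim_Y$ are fixed setwise simply because they belong to $\mathcal{A}$.
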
 

\begin{proof} Suppose $\mathcal{P}$ is $\aleph_0$-categorical and  $\X=\{X_1,\dots,X_r\}$ is finite. For each $i\in\{ 1,\hdots, r\}$ choose and fix  $g_i\in G$ and $Y_i\in \Y$ such that $X_i=g_iY_i$. Let $\underline{g}=(a_1,\dots,a_n)$ and $\underline{h}=(b_1,\dots,b_n)$ be a pair of $n$-tuples of $G$. For each $1\leq j \leq n$ and $1\leq i \leq r$, form $(A_j,a_j),(B_j,b_j),(C_i,g_i)\in \mathcal{P}$ for some  $A_j,B_j,C_i\in \Y$. Impose the conditions that 
\[ \big( (A_1,a_1),\dots, (A_n,a_n),(C_1,g_1),\dots,(C_r,g_r),(Y_1,1),\dots,(Y_r,1)\big)
\] 
and
\[ \big( (B_1,b_1),\dots, (B_n,b_n),(C_1,g_1),\dots,(C_r,g_r),(Y_1,1),\dots,(Y_r,1)\big) 
\] 
are related by $\sim_{\mathcal{P},n+2r}$, via $\phi=(\psi;\theta)\in \text{Aut}(\mathcal{P})$. 
Then 
\[ X_i\psi=(g_iY_i)\psi=(g_i\theta) (Y_i\psi) = g_i Y_i=X_i
\] 
for each $i$, and so $\psi=I_{\X}$. For any $g\in G$ and $X_i\in \X$ we then have 
\[ gX_i = (gX_i)\psi = (g\theta)(X_i\psi) = (g\theta) X_i,\] 
so that $g \, \nu \, g\theta$. Thus $\theta\in \text{Aut}(G)$ preserves $G/\nu$, and is such that $a_j\theta=b_j$ for $1\leq j \leq n$. Hence $G$ is $\aleph_0$-categorical over $G/\nu$.  

\end{proof}

In the corresponding case where $G$ is finite we consider the augmented
$G$-act $(\X,\Y)$ with universe $\X$ and signature
$(\leq, \Y, \lambda_g (g\in G))$ where $\Y$ is regarded as a unary relation and
$\lambda_g$ is the action of $g$ on $\X$. An automorphism $\phi$ of $(\X,\Y)$ must therefore be a $G$-act isomorphism (that is, $ (gX)\phi=g(X\phi)$ for all $g\in G, X\in \X$), in addition to being an order automorphism of $\X$ fixing $\Y$ setwise.

\begin{proposition} Let $\mathcal{P}=\mathcal{P}(G,{\X},{\Y})$ be a $\mathcal{P}$-semigroup such that $G$ is finite. Then $\mathcal{P}$ is $\aleph_0$-categorical if and only if
$(\X,\Y)$ is   $\aleph_0$-categorical. 
\end{proposition}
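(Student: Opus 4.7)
The plan is to use Theorem~\ref{theorem:paut} to pass between automorphisms of $\mathcal{P}$ and automorphisms of the augmented $G$-act $(\X,\Y)$, exploiting the finiteness of $G$ to force the group component $\theta$ of any witnessing automorphism to be trivial. The key observation is that $(\psi;\theta)\in\Aut\mathcal{P}$ restricts to an automorphism of $(\X,\Y)$ precisely when $\theta=\operatorname{id}_G$, since the defining condition $(gA)\psi=(g\theta)(A\psi)$ collapses in that case to $\psi$ being a $G$-act morphism.

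For $(\Leftarrow)$, suppose $(\X,\Y)$ is $\aleph_0$-categorical. Given an infinite list of $n$-tuples $\bigl((A_1^{(k)},g_1^{(k)}),\dots,(A_n^{(k)},g_n^{(k)})\bigr)\in\mathcal{P}^n$, the finiteness of $G^n$ yields, by pigeonhole, an infinite sub-list on which each $g_i^{(k)}=g_i$ is independent of $k$. Applying $\aleph_0$-categoricity of $(\X,\Y)$ to the $\Y$-tuples $(A_1^{(k)},\dots,A_n^{(k)})$ produces $k\neq k'$ and $\psi\in\Aut(\X,\Y)$ with $A_i^{(k)}\psi=A_i^{(k')}$; Theorem~\ref{theorem:paut} then lifts $\psi$ to $\phi=(\psi;\operatorname{id}_G)\in\Aut\mathcal{P}$, which sends $(A_i^{(k)},g_i)$ to $(A_i^{(k')},g_i)$, as required.

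For $(\Rightarrow)$, suppose $\mathcal{P}$ is $\aleph_0$-categorical. Given an infinite list of $n$-tuples $(X_1^{(k)},\dots,X_n^{(k)})\in\X^n$, I use $G\Y=\X$ to pick $g_i^{(k)}\in G$ with $(g_i^{(k)})^{-1}X_i^{(k)}\in\Y$, and then pigeonhole to assume $g_i^{(k)}=g_i$ is constant in $k$. For each $g\in G$, the McAlister condition $g\Y\cap\Y\neq\emptyset$ furnishes, once and for all, a fixed $B_g\in\Y$ with $g^{-1}B_g\in\Y$, so that $(B_g,g)\in\mathcal{P}$. I then augment the $k$-th tuple to
\[T^{(k)}=\bigl((g_1^{-1}X_1^{(k)},1),\dots,(g_n^{-1}X_n^{(k)},1),\,(B_g,g)_{g\in G}\bigr)\in\mathcal{P}^{n+|G|},\]
whose last $|G|$ coordinates are independent of $k$. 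Applying $\aleph_0$-categoricity of $\mathcal{P}$ yields $k\neq k'$ and $\phi=(\psi;\theta)\in\Aut\mathcal{P}$ sending $T^{(k)}$ to $T^{(k')}$. Matching the last $|G|$ coordinates forces $g\theta=g$ for every $g\in G$, so $\theta=\operatorname{id}_G$ and hence $\psi\in\Aut(\X,\Y)$; matching the first $n$ coordinates, together with $(g_i^{-1}A)\psi=g_i^{-1}(A\psi)$, then gives $X_i^{(k)}\psi=X_i^{(k')}$.

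The main obstacle is exactly this last point: pinning $\theta$ to the identity. Without it, $\psi$ is only a $\theta$-twisted $G$-act map and need not belong to $\Aut(\X,\Y)$. The insertion of the $|G|$ anchoring witnesses $(B_g,g)$ resolves this, and is precisely the step in which the finiteness of $G$ is used in an essential way.
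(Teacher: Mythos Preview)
Your proof is correct and follows essentially the same strategy as the paper's: in both directions you use pigeonhole on the finite set $G^n$ to freeze the group components, and in the $(\Rightarrow)$ direction you anchor with elements $(B_g,g)$ for every $g\in G$ to force $\theta=\operatorname{id}_G$, exactly as the paper does with its tuple $(D_1,g_1),\dots,(D_m,g_m)$. The only cosmetic difference is that the paper works with the $\Y$-coordinates $Y_i=h_i^{-1}X_i$ directly and recovers $X_i\psi=h_i(Y_i\psi)$ at the end, whereas you phrase the same computation via $(g_i^{-1}X_i^{(k)})\psi=g_i^{-1}(X_i^{(k)}\psi)$.
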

\begin{proof} Let $G=\{ g_1,\hdots, g_m\}$ and  pick $D_j\in \mathcal{Y}$ with
$(D_j,g_j)\in \mathcal{P}$, for $1\leq k\leq m$. 

Suppose first that $\mathcal{P}$ is
$\aleph_0$-categorical. For any $n\in \N$ and infinite sequence
of $n$-tuples of $\X$, we can find $h_1,\hdots, h_n\in G$ and  a subsequence in which every $n$-tuple can be written as
$(X_1,\hdots,X_n)$,  where $X_i\in h_i\Y$.  From the $\aleph_0$-categoricity of $\mathcal{P}$ we can find distinct elements 
$(h_1Y_1,\hdots,h_nY_n)$ and $(h_1Z_1,\hdots,h_nZ_n)$ of our sequence (where $Y_i,Z_i\in \Y$ for
$1\leq i\leq n$) such that
\[\big((D_1,g_1),\hdots ,(D_m,g_m),(Y_1,1),\hdots, (Y_n,1)\big)
\sim_{\mathcal{P},m+n} \big((D_1,g_1),\hdots ,(D_m,g_m),(Z_1,1),\hdots, (Z_n,1)\big)\]
via $\phi=(\psi;\theta)$, where clearly $\theta=I_G$. Then for any $g\in G, X\in \X$ we have
\[(gA)\psi=(g\theta)( A\psi)=g(A\psi)\]
so that $\psi$ is a $G$-act isomorphism, in addition to possessing the properties that  $\psi\in \Aut \X$ and $\psi|_{\mathcal{Y}}\in \Aut\Y$. Thus $\psi$ is an automorphism of the augmented $G$-Act $(\X,\Y)$. 
Moreover, we have
\[(h_iY_i)\psi=h_i (Y_i\psi)=h_iZ_i\]
so that 
\[(h_1Y_1, \hdots, h_nY_n)\sim_{(\mathcal{X},\mathcal{Y}), n} (h_1Z_1, \hdots, h_nZ_n)\]
as required. Thus $(\mathcal{X},\mathcal{Y})$ is $\aleph_0$-categorical. 

Conversely, suppose that $(\mathcal{X},\mathcal{Y})$ is $\aleph_0$-categorical and we have an infinite sequence of $n$-tuples of $\mathcal{P}$. Since $G$ is finite and 
 $(\mathcal{X},\mathcal{Y})$ is $\aleph_0$-categorical we may find a distinct pair
 $\big((A_1,h_1),\hdots , (A_n,h_n)\big)$ and 
  $\big((B_1,h_1),\hdots , (B_n,h_n)\big)$ such that
  \[(A_1,\hdots, A_n)\sim_{(\mathcal{X},\mathcal{Y}),n} 
  (B_1,\hdots, B_n)\]
  via $\psi$. As $\psi$ is a $G$-act morphism, it is immediate that
  $(\psi;I_G)$ is in $\Aut(\mathcal{P})$ and moreover,
  \[\big((A_1,h_1),\hdots , (A_n,h_n)\big)\sim_{\mathcal{P},n}\big((B_1,h_1),\hdots , (B_n,h_n)\big)\]
  via $(\psi;I_G)$. Thus $\mathcal{P}$ is $\aleph_0$-categorical as required.
\end{proof} 

To deal with the case of a $\mathcal{P}$-semigroup where both the  semilattice 
$\Y$ and group $G$ are infinite,  we require a little more  sophistication. In the sequel to this article we obtain classes of $\aleph_0$-categorical $E$-unitary semigroups with infinite semilattice of idempotents, by restricting our attention to 
those with central idempotents.

\end{document}